\newtheorem{assumption}[theorem]{Assumption}
\title{Solving irreducible stochastic mean-payoff games and entropy games by relative Krasnoselskii-Mann iteration} %
\titlerunning{Relative Krasnoselskii-Mann iteration}%
\author{Marianne Akian}{INRIA and CMAP, École polytechnique, IP Paris, CNRS, France}{Marianne.Akian@inria.fr}{}{}
\author{St\'ephane Gaubert}{INRIA and CMAP, École polytechnique, IP Paris, CNRS, France}{Stephane.Gaubert@inria.fr}{}{}
\author{Ulysse Naepels}{École polytechnique, IP Paris, France}{Ulysse.Naepels@polytechnique.edu}{}{}
\author{Basile Terver}{École polytechnique, IP Paris, France}{Basile.Terver@polytechnique.edu}{}{}
\authorrunning{M. Akian, S. Gaubert, U. Naepels and B. Terver} %
\keywords{Stochastic mean-payoff games, concurrent games, entropy games, relative value iteration, Krasnoselskii-Mann fixed point algorithm, Hilbert projective metric} %
\renewcommand{\geq}{\geqslant}
\renewcommand{\leq}{\leqslant}
\newcommand{\mbar}{\underline{m}}
\newcommand{\edges}{\mathscr{E}}
\newcommand{\vertexset}{\mathscr{V}}
\newcommand{\vertexsetD}{{\mathscr{V}_D}}
\newcommand{\vertexsetT}{{\mathscr{V}_T}}
\newcommand{\vertexsetP}{{\mathscr{V}_P}}
\newcommand{\dunion}{\uplus}
\newcommand{\eweight}{m}
\newcommand{\NN}{\mathbb{N}}
\newcommand{\RR}{\mathbb{R}}
\newcommand{\R}{\mathbb{R}}
\newcommand{\calm}{\mathcal{M}}
\newcommand{\norm}[1]{\|#1\|}
\newcommand{\normH}[1]{\|#1\|_{\text{\rm H}}}
\newcommand{\shapley}{T}
\newcommand{\shapleytheta}{T_{\theta}}
\newcommand{\shapleymKM}{T_{\textrm{m},\vartheta}}
\newcommand{\diag}{\operatorname{diag}}
\newcommand{\irr}{k_{\mathrm{irr}}}
\newcommand{\uni}{k_{\mathrm{uni}}}
\newcommand{\FmKM}{F_{\textrm{m},\vartheta}}
\newcommand{\FmKMtilde}{\tilde{F}_{\textrm{m},\vartheta}}
\newcommand{\randpolpair}{\Pi_{\mathrm{R}}}
\newcommand{\randpolMin}{\Pi_{\mathrm{R}}^{\mathrm{Min}}}
\newcommand{\randpolMax}{\Pi_{\mathrm{R}}^\mathrm{Max}}
\newcommand{\purepolpair}{\Pi_{\mathrm{P}}}
\newcommand{\purepolMin}{\Pi_{\mathrm{P}}^\mathrm{Min}}
\newcommand{\purepolMax}{\Pi_{\mathrm{P}}^\mathrm{Max}}
\newcommand{\E}{{\mathbb E}}
\newcommand{\pmin}{p_{\min}}
\newcommand{\xapprox}{\tilde{x}}
\newcommand{\shapleyapp}{\tilde{T}}
\newcommand{\mytop}{\mathbf{t}}
\newcommand{\mybot}{\mathbf{b}}
\newcommand{\mcst}{\bar{\mathcal{M}}}
\newcommand{\chiu}{\overline{\chi}}
\newcommand{\chil}{\underline{\chi}}
\newcommand{\hilbert}[1]{\|#1\|_{\mathrm{H}}}
\newcommand{\exle}[2]{{}^{#2}#1} 
\newcommand{\Tbeta}{\exle{T}{\beta}}
\newcommand{\Optmin}{\mathrm{Opt}_{\mathrm{Min}}}
\newcommand{\Optmax}{\mathrm{Opt}_{\mathrm{Max}}}
\newcommand{\Input}{\textbf{input}}
\newcommand{\new}[1]{{\em #1}}
\begin{document}

\maketitle
\makeatother
\def\@oddfoot{
     }
\makeatletter

\begin{abstract}
  We analyse an algorithm solving stochastic mean-payoff games,
  combining the ideas of relative value iteration and of
  Krasnoselskii-Mann damping.  We derive parameterized complexity
  bounds for several classes of games satisfying irreducibility
  conditions.  We show in particular that an $\epsilon$-approximation
  of the value of an irreducible concurrent stochastic game can be
  computed in a number of iterations in $O(|\log\epsilon|)$ where the
  constant in the $O(\cdot)$ is explicit, depending on the smallest
  non-zero transition probabilities. This should be compared with a
  bound in $O(|\epsilon|^{-1}|\log(\epsilon)|)$ obtained by Chatterjee
  and Ibsen-Jensen (ICALP 2014) for the same class of games, and to a
  $O(|\epsilon|^{-1})$ bound by Allamigeon, Gaubert, Katz and Skomra
  (ICALP 2022) for turn-based games. We also establish parameterized
  complexity bounds for entropy games, a class of matrix
  multiplication games introduced by Asarin, Cervelle, Degorre, Dima,
  Horn and Kozyakin.  We derive these results by methods of
  variational analysis, establishing contraction properties of the
  relative Krasnoselskii-Mann iteration with respect to Hilbert's
  semi-norm.
\end{abstract}

\section{Introduction}
\subsection{Motivation and context}
Stochastic mean-payoff games are a fundamental class of zero-sum games, appearing
in various guises.
In {\em turn-based} games, two players play sequentially, alternating moves,
or choices of an action,
being aware of the previous decision of the other player. Turn-based
games with mean-payoff and finite state and action spaces are among the unsettled problems in complexity theory:
they belong to the complexity class NP $\cap$ coNP~\cite{condon,zwick_paterson}
but are not known to be polynomial-time solvable. We refer the reader
to the survey~\cite{andersson_miltersen} for more information on the different
classes of turn-based games.
In contrast, in {\em concurrent games}, at each stage, the two players choose simultaneously one action, being unaware of the choice of the other player at the same stage. Turn-based games are equivalent to a subclass of concurrent games (in which in each state, one of the two
players is a dummy). The existence of the value for concurrent stochastic
mean-payoff games is a celebrated result of Mertens and Neyman~\cite{mertens_neyman}. This builds on earlier results by Bewley and Kohlberg, connecting mean-payoff concurrent games with discounted concurrent games, by making the discount factor tend to $1$, see~\cite{bewley_kohlberg}.
Concurrent games are hard to solve exactly: the value is an algebraic
number whose degree may be exponential in the number of states~\cite{tsigaridas}.\todo{SG: stated the stronger lower bound of \cite{tsigaridas}}
Moreover, concurrent reachability games are
square-root sum hard~\cite{Etessami2008}.

Another class consists of {\em  entropy games}, introduced by
  Asarin, Cervelle, Degorre, Dima, Horn and Kozyakin 
as an interesting category of ``matrix multiplication games''~\cite{asarin_entropy}.
Entropy games capture a variety of applications, arising
in risk sensitive control~\cite{Howard-Matheson,anantharam},
portfolio optimization~\cite{Akian2001}, growth maximization and population dynamics~\cite{Sladky1976,rothblumwhittle,rothblum,zijmjota}.
Asarin et al.\ showed that entropy games belong to the class NP $\cap$ coNP, showing
an analogy with turn based games.
In \cite{akian:hal-02143807},
Akian, Gaubert, Grand-Cl\'ement and Guillaud showed that entropy games are actually special cases of stochastic mean-payoff games, in which action spaces are infinite sets (simplices), and payments are given by Kullback-Leibler divergences.

A remarkable subclass of stochastic mean-payoff games arises when imposing
{\em ergodicity} or {\em irreducibility} conditions. Such conditions entail
that the value of the game is independent of the initial state.
The simplest condition of this type requires that every pair of policies (positional strategies) of the two players induces an irreducible Markov chain. Then, the solution
of the game reduces to solving
a nonlinear eigenproblem of the form $T(u)=\lambda e + u$, in which $u\in \R^n$
is a non-linear eigenvector, $\lambda$ is a non-linear eigenvalue, which provides the value of the mean-payoff game, $e$ is the unit vector of $\R^n$,
and $T$ is a self-map of $\R^n$, the 
dynamic programming operator of the game, which we shall refer to as the ``Shapley'' operator.
In fact, Shapley originally introduced
a variant of this operator, adapted to the discounted case~\cite{shapley_stochastic}.
The undiscounted mean-payoff case was subsequently considered by Gillette~\cite{gillette}.
We refer the reader
to~\cite{sorin_repeated_games,RS01} for background on Shapley operators and on the ``operator approach'' to games,
and to~\cite{ergodicity_conditions} for a discussion of the non-linear eigenproblem. 

In the one-player case, White \cite{WHITE1963373} introduced {\em relative value iteration},
which consist in fixed point iterations up to
additive constants $\lambda_k\in\R$, i.e.\ $x_{k+1}= T(x_k)-\lambda_k e$.
This solves the non-linear eigenproblem $T(u)= \lambda e +u$ under
a primitivity assumption.
However, this assumption appears to be too restrictive in the light of
the classical Krasnoselkii-Mann algorithm~\cite{krasno,mann}, which
allows one to find a fixed point
of a nonexpansive self-map $T$ of a finite dimensional normed space, by constructing the ``damped'' sequence
$x_{k+1}=(1-\theta) T(x_k)+\theta x_k$, where $0<\theta<1$.
Indeed, 
it was proposed in~\cite{stott2020} to apply this algorithm to 
the non-linear eigenproblem $T(u)= \lambda e +u$, thought of as a fixed point
problem in the quotient vector space $\R^n/\R e$. 
We will refer to this algorithm
as the {\em relative Krasnoselskii-Mann} value iteration.
An  error bound in $O(1/\sqrt{k})$ was derived in~\cite{stott2020} for this algorithm,
as a consequence of a general theorem of Baillon and Bruck~\cite{baillonbruck},
and the existence of an asymptotic geometric convergence rate
was established in a special case. This left open
the question of obtaining stronger iteration complexity bounds, in a ``white box model'',
for specific classes of stochastic mean-payoff games.

\subsection{Contribution}
We apply the relative Krasnoselskii-Mann
value iteration algorithm to deduce complexity bounds for several classes
of stochastic games.
We consider in particular {\em unichain}
concurrent stochastic mean-payoff games, in which every pair of positional
strategies of the two players induces a unichain transition matrix
(i.e., a stochastic matrix with a unique final class).
We define $\pmin$ to be the smallest non-zero off-diagonal transition probability in the model.
\Cref{cor-complexity-concur} shows that the relative Krasnoselskii-Mann iteration yields an
$\epsilon$-approximation of the value of the game, after
$C (|\log\epsilon|))$ iterations. The factor $C$ has an essential
term of the form $k \theta^{-k}$, in which $k\leq n$ is
a certain ``unichain index'', which is equal to $1$ if all the transition
probabilities are positive, $\theta=\pmin/(1+\pmin)$, and $n$ denotes the number of states.
Then, we consider the special case of unichain turn-based games,
with rational transition probabilities whose denominator divides $M$.
\Cref{th-uni-turn} shows that optimal policies can be obtained after a number
of iterations of order $M^k$.
The main tool is~\Cref{contraction_concu}, which shows that a suitable iterate of the Shapley
operator of a unichain concurrent game is a contraction in Hilbert's seminorm.
This theorem is proved using techniques of variational analysis,
in particular we use a classical result of Mills~\cite{mills}, characterizing
the directional derivative of the value of a matrix game, and properties
of nonsmooth semidifferentiable maps.

Finally, we introduce a variant of the relative Krasnoselkii-Mann algorithm,
adapted to entropy games. \Cref{th-irr-entropy} shows that an irreducible entropy game can be solved
exactly in a time of order $(1+\mathcal{A}/\mbar)^k$ where $k\leq n$ is a certain
``irreducibility index'', 
$\mbar\geq 1$ is the smallest multiplicity of an off-diagonal transition,
and $\mathcal{A}$ is a measure of the {\em ambiguity} of the game.
In particular, we have $W\leq \mathcal{A}\leq n^{1-1/n}W$ where $W$ is the maximal multiplicity of a transition.
The proof exploits the Birkhoff-Hopf theorem, which states that a positive
matrix is a contraction in Hilbert's projective metric.

\subsection{Related work}

The algorithmic approach of stochastic mean-payoff games games satisfying irreducibility conditions
goes back to the work of Hoffman and Karp~\cite{hoffman_karp}, applying
policy iteration to solve turn-based games.
Chatterjee and Ibsen-Jensen~\cite{DBLP:journals/corr/ChatterjeeI14} studied
more generally the {\em concurrent} stochastic mean-payoff games, under appropriate conditions of ergodicity.
They showed in particular
that the problem of approximation of the value is in FNP, and that this approximation problem, restricted to turn-based ergodic games, is at least as hard as the decision
problem for simple stochastic games. They also showed
that value iteration provides and $\epsilon$-approximation
of the value of a concurrent stochastic game statisfying an irreducibility condition
in $O(\tau \epsilon^{-1}|\log\epsilon|)$ iterations, where
$\tau$ denotes a bound of the passage time between any two
states under an arbitrary strategy, see Theorem~18, {\em ibid.} A recent ``universal bound''
on value iteration by Allamigeon, Gaubert, Katz and Skomra~\cite[Th.~13]{icalp2022} entails an
improvement of this bound to $O(\tau \epsilon^{-1})$. \Cref{cor-complexity-concur}
further improves this bound to get $C |\log\epsilon|$.
However, the later result requires an unichain assumption,
whereas the assumption of~\cite[Th.~13]{icalp2022} is milder.

The question of computing the value of a concurrent discounted stochastic game
has been studied by Hansen, Kouck\'y, Lauritzen, Miltersen and Tsigaridas in~\cite{tsigaridas}, who showed, using semi-algebraic geometry techniques,
that an $\epsilon$-approximation of the value of a general concurrent game can be obtained
in polynomial time if the number $n$ of states is fixed.
The exponent of the polynomial is of order $O(n)^{n^2}$ and
it was remarked in~\cite{tsigaridas} that ``getting a better dependence
on $n$ is a very interesting open problem''. 
Boros, Gurvich, Elbassioni and Makino considered the notion of $\epsilon$-ergodicity
of a concurrent mean-payoff game, requiring that the mean-payoff of two initial states
differ by at most $\epsilon$. They provided a potential-reduction
algorithm allowing one to decide $\epsilon$-ergodicity, and to get an $\epsilon$-approximation
of the value, with a dependence in $\epsilon$ of order $\epsilon^{-O(2^{2n} n\max(|A|,|B|))}$, see~\cite{Boros2016}.
Attia and Oliu-Barton developed in~\cite{miguel}
a bisection algorithm, with a complexity bound polynomial in $|\log \epsilon|$ and
in $|A|^n$ and $|B|^n$ where $A,B$ are the action spaces.
In contrast to these three works, our approach only applies to the subclass of {\em unichain} concurrent games,
but its complexity has a better dependence in the number of states; in particular, the exponents in our bound is at most
$n$,
and the execution time grows only polynomially with the numbers of actions $|A|$ and $|B|$.
Moreover, our approach applies more generally to infinite (compact) action spaces (we only need an oracle
evaluating the value of a possibly infinite matrix game up to a given accuracy).

The analysis of relative value iteration, using contraction techniques, goes back to
the work of Federguen, Schweitzer and Tijms~\cite{FEDERGRUEN1978711},
dealing with the one-player and finite action spaces case, under a
primitivity condition. The novelty here is the analysis of the concurrent two-player
case, as well as the analysis of the effect of the Krasnoselskii-Mann damping,
allowing one to replace earlier primitivity conditions by a milder unichain condition.
Moreover, even in the one-player
case, our formula for the contraction rate given in \Cref{contraction_concu} improves
the one of~\cite{FEDERGRUEN1978711} (see~\Cref{rk-explain} for a comparison).

Our results of \Cref{sec-entropy} dealing with entropy games are inspired by the series of works~\cite{asarin_entropy,akian:hal-02143807,icalp2022}.
The subclass of ``Despot-free'' entropy games can be solved in polynomial time~\cite{akian:hal-02143807},
and it is an open question whether general entropy games can be solved in polynomial time.
The approach of~\cite{icalp2022} entails that one can get an $\epsilon$-approximation
of the value of an entropy game in $O(\epsilon^{-1})$ iterations, where the factor in the $O(\cdot)$ is %
exponential in the parameters of the game. This bound is refined here
to $O(|\log\epsilon|)$, in which the factor in the $O(\cdot)$ depends
on a measure of ``ambiguity'' -- but our approach requires
an irreducibility assumption.

\section{Preliminary results on Shapley operators}

Let $n$ be an integer.
A map $T : \RR^n \rightarrow \RR^{n} $ is said to be \textit{order-preserving} when: $\forall x,y \in \RR^n, x \leq y \implies T(x) \leq T(y)$, where $\leq$ denotes the standard partial order of $\R^n$.
It is \textit{additively homogeneous} when: $\forall x \in \RR^n, \forall \lambda \in \RR, T(x+ \lambda e ) = T(x) + \lambda e$ where $e$ is the vector of $\RR^n$ having 1 in each coordinate.
\begin{definition}\label{def-shapley}
    A map $T : \RR^n \rightarrow \RR^n $ is an (abstract) {\em Shapley operator} if it is order-preserving and additively homogeneous. 
\end{definition}
We will justify the terminology ``Shapley operator'' in the next section,
where we give concrete examples, arising as dynamic programming operators
of different classes of zero-sum repeated games.
We set $[n]\coloneqq \{1,\ldots n\}$.
For any $x\in \R^n$, 
we denote $ \textbf{t}(x) \coloneqq \max_{i \in [n]} x_i$ and $ \textbf{b}(x) \coloneqq \min_{i \in [n]} x_i $
(read ``top'' and ``bottom''). We define the \new{Hilbert's seminorm}
of $x$ by: $\normH{x} = \textbf{t}(x) - \textbf{b}(x)$.
Since $\normH{x}=0$ iff $x\in \R e$, we get that
$\normH{\cdot}$ is actually a norm on the quotient vector
space $\R^n/\R e$. 
We also notice that: $\norm{x}_{\infty} = \inf\{\lambda \in \RR_+ : -\lambda e \leq x \leq \lambda e \}$
and: $\norm{x}_{H} = \inf\{\beta - \alpha \in \RR_+ : \alpha, \beta \in \RR, \alpha e \leq x \leq  \beta e\}$.
It is easy to show, thanks to these expressions, that a Shapley operator $T$ is non-expansive (i.e., $1$-Lipschitz) for $\norm{\cdot}_{\mathrm{H}}$ and for $\norm{\cdot}_{\infty}$.
Then, %
 it induces a self-map $\overline{T}$
on the quotient vector space $\R^n/\R e$, sending
the equivalence class $x+\R e$ to $T(x)+\R e$,
and which is non-expansive.
\begin{definition}
  We define the \emph{escape rate} $\chi(T)$ of a Shapley operator $T$ as
  $\lim_{k\to\infty} k^{-1}T^k(v)\in \R^n$, where $v$ in $\R^n$.  The lower and upper escape rates are defined respectively by
  \(
  \chil(T)=\lim_k k^{-1} \mybot (T^k(v))\) and
  \(  \chiu(T)=\lim_k k^{-1} \mytop (T^k(v))\).
\end{definition}
Since $T$ is nonexpansive in the sup-norm, the existence and the values
of these limits are independent of the choice of $v\in \R^n$.
In general, the escape rate $\chi(T)=\lim_{k\to\infty} k^{-1}T^k(v)$ may not exist,
but a subadditive argument shows that the lower and upper escape rates always exist, see e.g.~\cite{arxiv1}.
A fundamental tool to establish the existence of the escape rate is to consider
the following ergodic equation.
\begin{definition}\label{ergodic}
    We say that the {\em ergodic equation} has a solution when there exists $\lambda \in \RR$ and $u \in \RR^n$ such that : $T(u) = \lambda e + u$.  
\end{definition}
\begin{observation}\label{existence_escape_rate}
\label{lemme_encadrement}
  If the above ergodic equation is solvable, then $\chi(T)=\lambda e$.
  More generally, if $\alpha e +v  \leq T(v) \leq \beta e+ v $ for
  some $v\in \R^n$ and $\alpha,\beta\in\R$, then
  $\alpha \leq \chil(T)\leq \chiu(T) \leq \beta$.
\end{observation}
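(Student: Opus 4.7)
The plan is to prove the more general sandwiching statement first, and then recover the ergodic-equation case as an immediate corollary. The two ingredients baked into the definition of a Shapley operator, order-preservation and additive homogeneity, are exactly what is needed to iterate inequalities of the form $T(v) \geq \alpha e + v$.

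First I would assume $\alpha e + v \leq T(v) \leq \beta e + v$ and prove by induction on $k$ that $k\alpha e + v \leq T^k(v) \leq k\beta e + v$. The induction step on the lower bound is: applying $T$, which is order-preserving, to $T^{k-1}(v) \geq (k-1)\alpha e + v$ gives $T^k(v) \geq T((k-1)\alpha e + v) = (k-1)\alpha e + T(v) \geq k\alpha e + v$, using additive homogeneity for the middle equality and the hypothesis $T(v) \geq \alpha e + v$ at the end. The upper bound is symmetric. Then I would apply $\mybot$ to the lower inequality and $\mytop$ to the upper inequality, divide by $k$, and send $k \to \infty$; the terms $k^{-1}\mybot(v)$ and $k^{-1}\mytop(v)$ vanish in the limit, and since $\mybot \leq \mytop$ pointwise the limits satisfy
\[
\alpha \leq \chil(T) \leq \chiu(T) \leq \beta,
\]
as claimed.

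For the first assertion, suppose $T(u) = \lambda e + u$. Iterating the identity using additive homogeneity yields $T^k(u) = k\lambda e + u$ for every $k$, so $k^{-1} T^k(u) \to \lambda e$ coordinate by coordinate; in particular $\chi(T)$ exists when computed at $v = u$ and equals $\lambda e$. To promote this to arbitrary $v \in \R^n$ one invokes the fact, already noted just before the definition of the escape rate, that $T$ is non-expansive in $\norm{\cdot}_\infty$, so $\norm{k^{-1}T^k(v) - k^{-1}T^k(u)}_\infty \leq k^{-1}\norm{v-u}_\infty \to 0$, whence the limit is the same constant vector $\lambda e$ independently of $v$.

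There is no real obstacle here: the observation is essentially a direct unwinding of the defining properties of a Shapley operator. The only mild care needed is in the last step, separating the scalar bounds $\chil(T) = \chiu(T) = \lambda$ (which follow from the general sandwich by taking $\alpha = \beta = \lambda$, $v = u$) from the vector statement $\chi(T) = \lambda e$, which relies additionally on the non-expansiveness of $T$ to transport the explicit trajectory computed at $u$ to arbitrary starting points.
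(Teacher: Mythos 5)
Your proof is correct and follows the same route as the paper: the same induction $k\alpha e + v \leq T^k(v) \leq k\beta e + v$ using order-preservation and additive homogeneity, followed by applying $\mybot$ and $\mytop$ and passing to the limit. Your treatment of the first assertion merely makes explicit what the paper leaves implicit, namely that $T^k(u)=k\lambda e+u$ gives the escape rate at $v=u$ and that sup-norm nonexpansiveness (already noted in the paper right after the definition) makes the limit independent of the starting point.
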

\begin{proof}  
 By an immediate induction, and as $T$ is order-preserving and additively homogeneous we have : $ k\alpha e +v \leq T^k(v) \leq k \beta e + v $.
 Then, $ k\alpha +\mybot(v)  \leq \mybot(T^k(v))\leq \mytop(T^k(v))\leq k\beta  + \mytop(v)$. Dividing by $k$ and letting $k$ tend to infinity, we obtain the second statement.
\end{proof}
We are inspired by the following observation from fixed point theory,
proved in~\Cref{sec-proof-obs}.
\begin{observation}\label{obs-contract}
  Suppose now that $T^q$ is $\gamma$-contraction in Hilbert's seminorm $\|\cdot\|_H$,
  for some $q\geq 1$ and $0<\gamma<1$. Then, the ergodic equation
  is solvable.
\end{observation}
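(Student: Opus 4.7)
The plan is to pass to the quotient space $\R^n/\R e$ and invoke the Banach fixed point theorem. Since $T$ is additively homogeneous, the relation $T(x+\mu e) = T(x)+\mu e$ ensures that $T$ descends to a well-defined self-map $\overline{T}$ of the quotient vector space $\R^n/\R e$, equipped with the norm induced by Hilbert's seminorm (which vanishes precisely on $\R e$, as noted in the text). The quotient is a finite-dimensional, hence complete, normed space.

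First I would observe that the assumption that $T^q$ is a $\gamma$-contraction in $\|\cdot\|_{\mathrm{H}}$ means exactly that the induced map $\overline{T}^q = \overline{T^q}$ is a $\gamma$-contraction on this Banach space. Banach's contraction theorem then yields a unique fixed point $\bar{u}$ of $\overline{T}^q$ in $\R^n/\R e$.

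Next I would upgrade this to a fixed point of $\overline{T}$ itself via the standard commutation argument: since $\overline{T}$ and $\overline{T}^q$ commute, the point $\overline{T}(\bar{u})$ also satisfies $\overline{T}^q(\overline{T}(\bar{u}))=\overline{T}(\bar{u})$, and by uniqueness of the fixed point of $\overline{T}^q$ we must have $\overline{T}(\bar{u})=\bar{u}$.

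Finally, unwinding the quotient: if $u \in \R^n$ is any lift of $\bar{u}$, then $T(u)$ and $u$ represent the same class, so $T(u)-u \in \R e$, i.e.\ $T(u)=\lambda e + u$ for some $\lambda \in \R$. This is the desired solution of the ergodic equation. There is no substantive obstacle here: once one recognizes that additive homogeneity makes Hilbert's seminorm a genuine norm on $\R^n/\R e$, the whole argument is a direct application of Banach's theorem combined with the elementary ``commuting iterates share a fixed point'' trick.
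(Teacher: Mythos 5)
Your proof is correct and follows the paper's own argument essentially verbatim: pass to the quotient $(\R^n/\R e, \|\cdot\|_{\mathrm{H}})$, apply the Banach fixed point theorem to $\overline{T}^q$, use the commuting-iterates trick to deduce that the unique fixed point of $\overline{T}^q$ is fixed by $\overline{T}$, and lift back to obtain $T(u)=\lambda e + u$. Nothing to add.
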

Shapley operators include (finite dimensional) Markov operators, which
are of the form $T(x)=Mx$, where $M$ is a $n\times n$ stochastic matrix
(meaning that $M$ has nonnegative entries and row sums one). In this
case, an exact formula is known for the contraction rate.
In fact, one can consider the operator norm of $M$, thought
of as a linear map acting on the quotient vector space $\R^n/\R e$,
\(
\norm{M}_{\mathrm{H}} = \underset{u \notin \RR e }{\sup}\frac{\norm{Mu}_{\mathrm{H}}}{\norm{u}_{\mathrm{H}}}
\).
\begin{theorem}[Corollary of~\cite{Dobrushin1956I}]\label{th-dob}
\(
  \hilbert{M} =  \delta(M) \coloneqq
  1 - \min_{1\leq i<j\leq n} \big\{\sum_{k \in [n]} \min(M_{ik},M_{jk})\big\}
  \).
\end{theorem}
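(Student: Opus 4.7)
The plan is to establish two matching inequalities, both derived from manipulations involving the quantity $s_{ij}\coloneqq 1-\sum_{k}\min(M_{ik},M_{jk})$, so that $\delta(M)=\max_{i<j} s_{ij}$.

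For the upper bound $\hilbert{M}\leq \delta(M)$, I would fix any $u\in\R^n$ and any pair of indices $i,j$, then rewrite
\[
(Mu)_i-(Mu)_j=\sum_{k}(M_{ik}-M_{jk})u_k=\sum_k a_k u_k -\sum_k b_k u_k,
\]
where $a_k\coloneqq M_{ik}-\min(M_{ik},M_{jk})$ and $b_k\coloneqq M_{jk}-\min(M_{ik},M_{jk})$ are nonnegative and satisfy $\sum_k a_k=\sum_k b_k=s_{ij}$. Bounding $\sum_k a_k u_k\leq s_{ij}\mytop(u)$ and $\sum_k b_k u_k\geq s_{ij}\mybot(u)$, I conclude that $(Mu)_i-(Mu)_j\leq s_{ij}\hilbert{u}\leq \delta(M)\hilbert{u}$, and taking the maximum over $i,j$ gives $\hilbert{Mu}\leq \delta(M)\hilbert{u}$.

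For the lower bound $\hilbert{M}\geq \delta(M)$, I would exhibit an explicit extremal vector. Pick a pair $(i^*,j^*)$ attaining $\delta(M)=s_{i^*j^*}$, set $S\coloneqq \{k\in[n]:M_{i^*k}\geq M_{j^*k}\}$, and take $u\coloneqq \mathbf{1}_S$, the indicator vector of $S$. Using $\sum_k M_{i^*k}=\sum_k M_{j^*k}=1$ together with the identity $|M_{i^*k}-M_{j^*k}|=M_{i^*k}+M_{j^*k}-2\min(M_{i^*k},M_{j^*k})$, summation gives $\sum_k |M_{i^*k}-M_{j^*k}|=2s_{i^*j^*}$, and the sign pattern on $S$ versus $S^{c}$ shows
\[
(Mu)_{i^*}-(Mu)_{j^*}=\sum_{k\in S}(M_{i^*k}-M_{j^*k})=\tfrac{1}{2}\sum_k|M_{i^*k}-M_{j^*k}|=s_{i^*j^*}=\delta(M).
\]
Provided $S\notin\{\emptyset,[n]\}$, we have $\hilbert{u}=1$ and therefore $\hilbert{M}\geq \hilbert{Mu}\geq \delta(M)$; the degenerate cases $S=\emptyset$ or $S=[n]$ force $\sum_k\min(M_{i^*k},M_{j^*k})=1$, i.e.\ $\delta(M)=0$, so the inequality is trivial.

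The proof is essentially a bookkeeping exercise; the only subtlety is verifying the sign-cancellation identity that relates the splitting variables $a_k,b_k$ to $|M_{i^*k}-M_{j^*k}|$, so that the indicator of the ``sign set'' $S$ is exactly the vector achieving equality in the upper bound argument. The degenerate cases where $S\in\{\emptyset,[n]\}$ must be ruled out or reduced to $\delta(M)=0$, but this is handled by the mass-preservation identity for stochastic matrices.
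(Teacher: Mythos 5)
Your proof is correct, but it takes a genuinely different route from the paper. The paper does not argue directly at all: it cites Dobrushin's original result that $\delta(M)$ is the operator norm of $M^T$ acting on the hyperplane $H=\{x\in\R^n \mid e\cdot x=0\}$ equipped with the $\ell_1$-norm, observes that $(\R^n/\R e,\hilbert{\cdot})$ is the dual Banach space of $(H,\|\cdot\|_1)$, and concludes $\hilbert{M}=\delta(M)$ because an operator and its adjoint have equal operator norms. You instead prove the identity from scratch by two matching inequalities: the splitting $M_{ik}-M_{jk}=a_k-b_k$ with $a_k,b_k\geq 0$ and $\sum_k a_k=\sum_k b_k=s_{ij}$ gives $(Mu)_i-(Mu)_j\leq s_{ij}(\mytop(u)-\mybot(u))$, hence $\hilbert{M}\leq\delta(M)$; and the indicator vector $u=\mathbf{1}_S$ of the sign set $S=\{k: M_{i^*k}\geq M_{j^*k}\}$, combined with the identity $\sum_k|M_{i^*k}-M_{j^*k}|=2s_{i^*j^*}$ from row-sum preservation, attains equality. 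Both steps check out, including your handling of the degenerate cases (note $S=\emptyset$ is in fact impossible for stochastic rows, and $S=[n]$ forces the two rows to coincide, so $\delta(M)=0$ and the bound is vacuous; your treatment covers this). What each approach buys: yours is self-contained and elementary, making explicit the extremal vector that saturates the bound, which is instructive; the paper's duality argument is shorter, delegates the combinatorial work to the cited classical result, and situates the coefficient $\delta(M)$ in its original functional-analytic context ($\ell_1$-contraction of $M^T$ on zero-sum vectors), at the price of invoking the duality of $(H,\|\cdot\|_1)$ and $(\R^n/\R e,\hilbert{\cdot})$ and the norm equality for adjoints.
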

The derivation from~\cite{Dobrushin1956I} is recalled in~\Cref{sec-explain-dobrushin}.
The term $\delta(M)$ is known as {\em Dobrushin ergodicity coefficient}.

\section{Two classes of zero-sum two-player repeated games}
We next recall the defintition and basic properties of two classes of zero-sum two-player games with finite state
spaces.
More details can be found in \cite{sorin_repeated_games} for stochastic games and in \cite{asarin_entropy,akian:hal-02143807} for entropy games.
\subsection{Concurrent repeated zero-sum stochastic two-player games}
\label{repeated_games}
We assume that the state space is equal 
to $[n] = \{1,\dots,n\}$. We call the two players
``Min'' and ``Max''.
The game is specified
by the following data. For every state $i\in [n]$, 
we are given two non-empty compact sets $A(i)$ and $B(i)$,
representing the admissible actions of players Min and Max,
respectively.
For every $i\in [n]$ and every choice
of actions $(a,b)\in A(i)\times B(i)$, we are given
a real number $r_i^{ab}$, representing
an instantaneous payment, and a stochastic vector
$P_i^{a,b}=(P_{ij}^{ab})_{j\in[n]}$,
meaning that
$P_{ij}^{ab}\geq 0$ and that $\sum_{j\in [n]}P^{ab}_{ij}=1$.
We assume that the functions $(a,b)\mapsto r_i^{ab}$
and $(a,b) \mapsto P_i^{a,b}$ are continuous.

The concurrent game is played in successive stages, starting
from a known initial state $i_0$ at stage $0$.
We denote by $a_k$ and $b_k$ the actions selected by
Players Min and Max at stage $k$, respectively,
and by $i_k$ the state at this stage.
The history until stage $k$ consists of the sequence
$H_k=((i_\ell,a_\ell,b_\ell)_{0\leq \ell < k}, i_k)$.
A randomized strategy of Player Min (resp.\ Max)
is a collection of measurable functions assigning to
every history $H_k$ a probability measure $\alpha_k$
(resp.\ $\beta_k$)
on the compact set $A(i_k)$ (resp.\ $B(i_k)$). At stage $k$,
being informed of the history $H_k$ up to this stage,
Player Min draws
a random action $a_k$ according to the probability measure $\alpha_k$,
and similarly, Player Max draws a random action $b_k$ according
to the probability measure $\beta_k$. Then, Player Min
makes to Player Max an instantaneous payment of $r_{i_k}^{a_k,b_k}$,
and the next state $i_{k+1}$ is drawn randomly according
to the probability measure $(P_{i_k,j}^{a_kb_k})_{j\in [n]}$ on the state space $[n]$,
i.e., the conditional probability that $i_{k+1}=j$,
given the history $H_k$ and actions $a_k,b_k$,
is given by $P_{i_k,j}^{a_kb_k}$. We shall say
that a strategy is {\em pure} or {\em deterministic} if the action
of the player is chosen as a deterministic
function of the history.
We denote by $\sigma_k$ (resp.\ $\tau_k$) the strategy of Player Min (resp.\ Max) at stage $k$, %
and denote by $\sigma$ and $\tau$ the sequences $(\sigma_k)_{k\geq 0}$ and
$(\tau_k)_{k\geq 0}$.
In this way, to any initial state $i_0\in [n]$ and any pair of strategies $(\sigma,\tau)$ 
of the two players %
is associated the infinite random sequence $(i_k,a_k,b_k)_{k\geq 0}$.
We denote by $\E^{\sigma,\tau}_{i_0}$ the expectation with respect to this process.

We shall need to consider special classes
of strategies.
A {\em Markovian} or {\em positional} strategy is a strategy such that $\sigma_k$ (resp.\ $\tau_k$) depends only on the current state $i_k$. %
Such a positional strategy $\sigma_k$ or $\tau_k$ is also called a \new{policy}.
It is said to be stationary if this policy is independent of $k$.
We shall denote by $\Delta_{A(i)}$ (resp.\ $\Delta_{B(i)}$) the set
of probability measures on $A(i)$ (resp.\ $B(i)$).
We denote by $\purepolMin$ (resp. $\randpolMin$), the set of pure 
(resp.\ randomized) policies of Min. It is in bijection (and will be identified)
with  $\prod_{i\in [n]} A(i)$ (resp.\  $\prod_{i\in [n]} \Delta_{A(i)}$).
Indeed, if $\sigma\in\purepolMin$, then $\sigma$ is identified
with $(\sigma(1),\ldots, \sigma(n))$ where 
$\sigma(i)\in A(i)$ is the action Min is choosing in state $i$
according to the policy $\sigma$.
Similarly, if $\alpha\in\randpolMin$, 
$\alpha$ is identified with $(\alpha_1,\ldots, \alpha_n)$,
where for $i\in [n]$, and $a\in A(i)$,
$d\alpha_i(a)$ is the probability that Min 
chooses the action $a$ in state $i$, according to the policy $\alpha$.
Finally, a pure policy $\sigma$ of Min is identified to
a randomized policy consisting of Dirac measures,
so that $\purepolMin\subset \randpolMin$.
We use the same notations and identifications for Max.
We shall also denote by  $\purepolpair =\purepolMin \times \purepolMax$ and $\randpolpair =\randpolMin \times \randpolMax$
the spaces of pairs of policies.

Given
an initial state $i_0$ 
and a pair of strategies $(\sigma,\tau)$ of the two players,
the expected payment received by Player Max in horizon $N$ is defined
by 
\[ J^N_{i_0}(\sigma, \tau)
\coloneqq \E^{\sigma,\tau}_{i_0}\left[ \sum_{k=0}^{N-1} r_{i_k}^{a_k,b_k}\right]
\enspace .\]
We shall denote by $J^N(\sigma, \tau)$ the vector of $\R^n$ 
with the above $i_0$ entry, for each $i_0\in [n]$.
The finite horizon game has a value $v^N\in\R^n$ and has a pair of
optimal (randomized) strategies $(\sigma^*, \tau^*)$,  meaning that
\begin{equation}\label{saddle}
    J^N(\sigma^*, \tau) \leq v^N=J^N(\sigma^*, \tau^*) \leq J^N(\sigma, \tau^*)
\enspace ,
\end{equation}
for all pairs $(\sigma,\tau)$ of strategies,
see~\cite{sorin_repeated_games}.
Moreover, one can choose the pair of optimal strategies $(\sigma^*, \tau^*)$ to be positional,
that is $(\sigma^*_k,\tau^*_k)\in \randpolpair$ for all $k\leq N$ (but it
generally depends on $k$ and $N$).
These optimal strategies can be obtained by using the dynamic programming equation of the 
game, as follows.

For any $i,j\in [n]$, $\alpha_i\in \Delta_{A(i)}$ and $\beta_i\in \Delta_{B(i)}$,
let us denote 
\begin{align} r^{\alpha_i,\beta_i}_i = \int_{A(i)\times B(i)}  r_i^{a,b} d\alpha_i(a)d\beta_i(b)\quad \text{and} \quad 
  P_{i,j}^{\alpha_i,\beta_i} = \int_{A(i)\times B(i)}  P_{i,j}^{a,b}d\alpha_i(a)d\beta_i(b)\enspace .\label{e-int}
\end{align}
This extends the functions $(a,b)\mapsto r_i^{a,b}$ and
$(a,b)\mapsto P_{i,j}^{a,b}$ from $A(i)\times B(i)$ to 
$\Delta_{A(i)}\times \Delta_{B(i)}$.
We then define the Shapley operator $T$ of the concurrent game as
the map $T: \R^n\to \R^n$ such that
\begin{align}
    T_i(v) =  \min_{\alpha_i \in \Delta_{A(i)} } \max_{\beta_i \in \Delta_{B(i)} }
    \Big( r_i^{\alpha_i,\beta_i} + \sum_{j\in [n]} P_{ij}^{\alpha_i,\beta_i} v_j\Big),
    \;\text{ for } i\in [n], \; v\in \R^n \enspace .\label{e-def-shapley-concurrent}
\end{align}
Note that in the above expression the infimum and supremum commute, owing to the compactness
of action spaces, and continuity assumptions on the functions $(a,b)\mapsto r_i^{a,b}$
and $(a,b)\mapsto P_{ij}^{ab}$ (this follows from Sion's minimax theorem).
\todo{BT : plutôt théorème de point selle de Sion ? SG. von neumann suffit pour des matrices finies. dans le cas compact on invoquera sion. SG. merci!}
Moreover, the operator $T$ satisfies the properties of \Cref{def-shapley}.
Then, the value of the concurrent game in finite horizon is obtained from the 
recurrence equations:
\( v^0 = 0, \quad v^N = T(v^{N-1})\).
Moreover, optimal strategies of the game when the remaining time is $k< N$
(or at stage $N-k$) are obtained by choosing 
optimal policies $\alpha$ and $\beta$ 
with respect to the vectors $v^{k}$, that is such that
$\alpha_i$ and $\beta_i$ are optimal in the expression of
$T_i(v^k)$ in \eqref{e-def-shapley-concurrent}.

We now describe the {\em mean-payoff game}, which is obtained by considering
the Cesaro limit of the payoff as the horizon $N$ tends
to infinity. More precisely, we set:
\[ \chi^+_{i_0}(\sigma, \tau)\coloneqq \limsup_{N\to\infty} N^{-1}{J^N_{i_0}(\sigma, \tau)}\quad \chi^-_{i_0}(\sigma, \tau)\coloneqq \liminf_{N\to\infty} N^{-1}{J^N_{i_0}(\sigma, \tau)}\enspace.\]
We shall say that the game with mean-payoff has a value $\chi^*\in\R^n$
if for all $\varepsilon>0$, there exists
strategies $\sigma^\varepsilon,\tau^\varepsilon$  of the two players
which are $\varepsilon$-optimal,
meaning that for every strategies $\sigma$ and $\tau$,
\(   -\varepsilon e+ \chi^+ (\sigma^\varepsilon, \tau) \leq \chi^*
\leq  \chi^- (\sigma, \tau^\varepsilon)+\varepsilon e\).
Mertens and Neyman~\cite{mertens_neyman}, building
on a result of Bewley and Kohlberg~\cite{bewley_kohlberg}, showed
that when the action spaces $A(i)$ and $B(i)$ are finite,
the mean-payoff game has a value (actually, in a stronger
{\em uniform} sense). Moreover, the value coincides
with the escape rate of the Shapley operator,
i.e., $\chi^*=\lim_k T^k(0)/k$. A counter-example
of Vigeral shows that these properties do not carry
over to the case of general compact action spaces~\cite{Vigeral2013}.

One particular case that will interest us 
is when the ergodic equation is solvable, that is when there exists $\lambda\in \R$ and $v\in \R^n$ such that $T(v)=\lambda e +v$.
In that case, $\chi^*=\lambda e$ and there exists optimal randomized strategies
for the two players
which are both positional and stationary. Such a pair of strategies
is obtained by choosing a pair $(\alpha,\beta)$ 
of optimal policies with respect to $v$,
meaning optimal in the expression of $T(v)$ in \eqref{e-def-shapley-concurrent}.
We shall see that the ergodic equation is always solvable
under a unichain condition, even in the case of compact action spaces
(\Cref{th-exists-unichain}).%

A remarkable subclass of concurrent games consists
of {\em turn-based} games. Then, the actions spaces
$A(i)$ and $B(i)$ are required to be {\em finite}, and
for every state $i\in [n]$,
we assume that either $A(i)$ or $B(i)$ is a singleton.
In other words, there is a bipartition $[n]=I_{\text{Min}}\dunion I_{\text{Max}}$
of the set of states, so that in every state $i\in I_{\text{Min}}$ (resp.\ $I_{\text{Max}}$), Min
(resp.\ Max) is the only player who has to take a decision.
Then, the Shapley operator of the game reduces to
\(%
T_i(v) = \min_{a \in A(i)} \max_{b \in B(i)}\big(r_i^{a,b} + \sum_j P_{i,j}^{a,b} v_j\big)
\),
for $i\in [n]$,
where again the min and max commute, because in every $i\in [n]$,
either the min or the max is taken over a set reduced
to a singleton.
Hence, the set of optimal policies with respect to a vector $v$ contains
pure policies and is obtained by taking $\sigma(i)=a$ 
and $\tau(i)=b$ optimal in the previous expression of $T_i(v)$.
This yields pure optimal policies for finite horizon turn based 
stochastic games, and for mean-payoff turn based stochastic games 
for which the ergodic equation is solvable. 
The existence of pure optimal policies for turn-based mean-payoff stochastic games
was shown by Liggett and Lippman \cite{LL69}.
 An illustrative example is given in~\Cref{sec-example}.
\subsection{Entropy games}\label{sec-entropy-games}
\todo{SG: I changed the notation of entropy games, following~\cite{icalp2022}}
Entropy games were introduced in \cite{asarin_entropy}. 
We use here the slightly more general model of~\cite{akian:hal-02143807,icalp2022}, to which we
refer for background. 
An {\em entropy game} is a turn-based game played on a (finite) digraph $(\vertexset,\edges)$, with two players,
called ``Despot'' and ``Tribune'', and an additional non-deterministic player, called ``People''.
We assume the set of vertices $\vertexset$ has a non-trivial partition: $\vertexset=\vertexsetD \dunion \vertexsetT \dunion \vertexsetP$. Players Despot, Tribune, and People control the states in $\vertexsetD$, $\vertexsetT$ and $\vertexsetP$ respectively, and they alternate their moves, i.e., $\edges\subset
(\vertexsetD \times \vertexsetT )\cup (\vertexsetT \times \vertexsetP) \cup (\vertexsetP \times \vertexsetD)$. 
We suppose that every edge $(p,d)\in \edges$ with $p\in \vertexsetP$ and $d\in \vertexsetD$
is equipped with a {\em multiplicity} $\eweight_{pd}$ which is a (positive)
natural number. For simplicity of exposition, we shall define here the value of an entropy game using
only pure policies (stationary positional strategies). More precisely, a policy $\sigma$ of Despot
is a map which assigns to every node $d\in \vertexsetD$ a node $t$ such that $(d,t)\in \edges$.
Similarly, a policy $\tau$ of Tribune is a map which assigns to every node $t\in \vertexsetT$ a
node $p\in \vertexsetP$. We denote by $n$ the cardinality of $\vertexsetD$.
Such a pair of policies determine a $n\times n$ matrix $M^{\sigma,\tau}$, such that
\(
M^{\sigma,\tau}_{d,d'} = m_{\tau(\sigma(d)),d'}
  \).
  Given an initial state $\bar{d}\in \vertexsetD$, we measure the ``freedom'' of Player People by
  the limit
  $R(\sigma,\tau)\coloneqq \lim_{k\to\infty} ((M^{\sigma,\tau})^ke)_{\bar{d}}^{1/k}$. 
A pair of policies determine a subgraph $\mathcal{G}^{\sigma,\tau}$,
  obtained by keeping only the successor prescribed by $\sigma$ for every node of $\vertexsetD$,
  and similarly for $\tau$ and $\vertexsetT$. 
 Then, the ``freedom'' of people is precisely the geometric growth rate of the
  number of paths of length $3k$ starting from node $\bar{d}$, counted 
with multiplicities,
as $k\to \infty$. In general, the graph $\mathcal{G}^{\sigma,\tau}$
may have several
strongly connected components,
  and it is observed in~\cite{icalp2022} that $R(\sigma,\tau)$ coincides
  with the maximal spectral radii of the diagonal blocks of the matrix
  $M^{\sigma,\tau}$ corresponding to the strongly connected components to which the initial state $\bar{d}$ has access
  in $\mathcal{G}^{\sigma,\tau}$.
  In an entropy game, Despot wishes to minimize the freedom of People, whereas
  Tribune (a reference to the magistrate of Roman republic) wishes to maximize it.
  It is shown in~\cite{akian:hal-02143807} that the entropy game
  has a value in the space of positional strategies, meaning that there exists policies $\sigma^*,\tau^*$,
  such that $R(\sigma^*,\tau)\leq R(\sigma^*,\tau^*)\leq R(\sigma,\tau^*)$ for all policies
  $\sigma,\tau$.

The dynamic programming operator of an entropy game is the self-map
$F$ of $\mathbb{R}_{>0}^n$ given by
\(%
F_d(x) = \min_{t\in \vertexsetT, (d,t)\in\edges}\; \max_{p\in \vertexsetP, (t,p)\in\edges}  \sum_{d'\in \vertexsetD, (p,d')\in\edges} m_{p,d'} x_{d'}\),
for $d\in\vertexsetD$.
Then, the operator $T  \coloneqq \log \circ F \circ \exp$ is a Shapley operator.
It is shown in~\cite{akian:hal-02143807} that the value of the entropy game with initial state $\bar{d}$ is given
by the limit $\lim_{k\to \infty} [F^k(e)]_{\bar{d}}^{1/k}$. 

\section{The unichain property}
Recall that to every $n\times n$ nonnegative matrix $M$ is associated a digraph
with set of nodes $[n]$, such that there is an arc from $i$ to $j$ if $M_{ij}>0$.
The matrix is {\em irreducible} if this digraph is strongly connected.
It is {\em unichain} if this digraph has a unique final strongly connected
component (a strongly components is {\em final} if any path starting from
this component stays in this component). The property of unichainedness
is sometimes referred to as {\em ergodicity} since a stochastic matrix
is unichain iff it has only one invariant measure, or equivalently,
if the only {\em harmonic vectors} (i.e. the solutions
$v$ of $Mv=v$) are the constant vectors, see the discussion
in Theorem 1.1 of \cite{ergodicity_conditions}, and the references
therein.

Given a pair $(\sigma,\tau)\in \purepolpair$ of pure policies,
we define the stochastic matrix:
\( %
P^{\sigma,\tau} = (P_{i,j}^{\sigma(i),\tau(i)})_{i,j\in [n]} \).

\begin{definition}
  We say that a game is {\em unichain} (resp.\ {\em irreducible})
  if for all pairs of pure policies  $\sigma,\tau$,
the matrix  $P^{\sigma, \tau}$ is unichain (resp.\ irreducible).
\end{definition}

  \begin{definition}\label{def-invariant}
    We say that a subset $S$ of the states is {\em closed} under the action of a matrix $P^{\sigma, \tau}$ if, starting from a state $s\in S$ and playing according
    to the policies $\sigma$ and $\tau$, the next state is still in $S$.
\end{definition}
\begin{remark}
    If $S$ is a set closed under the action of an unichain matrix $P^{\sigma, \tau}$, then $S$ contains the final class of this matrix. 
\end{remark}
\begin{remark}
The final class does not have to be the same for all pairs  $\sigma, \tau$ of policies in our definition of unichain games.
  \end{remark}
The following theorem, proved in~\Cref{sec-proof-th-exists-unichain}, addresses the issue of the existence of a solution to the ergodic equation in the case of a unichain game.
\begin{theorem}
  \label{th-exists-unichain}
  Let $T$ be the Shapley operator of a unichain concurrent stochastic game. Then, there exists a vector $v\in \R^n$ and $\lambda\in \R$ such that $T(v)=\lambda e +v$. Moreover, there exists a pair of optimal (randomized) positional strategies, obtained by selecting actions that achieve the minimum and maximum in the expression
  of $[T(v)]_i$, for each state $i\in[n]$.\todo{SG: proof of the last statement to be added}
\end{theorem}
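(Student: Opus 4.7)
The plan is to reduce the existence part to the contraction result stated further in the paper. By \Cref{contraction_concu}, the unichain assumption guarantees that a suitable iterate $T^q$ of the Shapley operator \eqref{e-def-shapley-concurrent} is a strict $\gamma$-contraction in Hilbert's seminorm for some $q\geq 1$ and $\gamma<1$. \Cref{obs-contract} then yields $v\in\R^n$ and $\lambda\in\R$ with $T(v)=\lambda e+v$, and \Cref{existence_escape_rate} forces $\chi(T)=\lambda e$.

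For the optimality part, fix such an ergodic eigenpair $(v,\lambda)$ and, for each $i\in[n]$, apply Sion's minimax theorem to the bilinear functional $(\alpha_i,\beta_i)\mapsto r_i^{\alpha_i,\beta_i}+\sum_j P_{ij}^{\alpha_i,\beta_i}v_j$ on the compact convex product $\Delta_{A(i)}\times\Delta_{B(i)}$. This produces a saddle point $(\alpha^*_i,\beta^*_i)$ realizing the value $[T(v)]_i=\lambda+v_i$, and thereby stationary positional randomized policies $\alpha^*\in\randpolMin$ and $\beta^*\in\randpolMax$. I would then verify optimality in the mean-payoff sense by a standard telescoping argument. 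For any (history-dependent, randomized) strategy $\tau$ of Max, the saddle-point inequality gives
\[
r_{i_k}^{\alpha^*_{i_k},\tau_k}+\sum_{j}P_{i_k,j}^{\alpha^*_{i_k},\tau_k}v_j\;\leq\;[T(v)]_{i_k}=\lambda+v_{i_k},
\]
stage by stage. Conditioning on the trajectory and summing, a straightforward induction on $N$ yields
\[
J^N_{i_0}(\alpha^*,\tau)\;\leq\;N\lambda+v_{i_0}-\E^{\alpha^*,\tau}_{i_0}[v(i_N)],
\]
whose residual is bounded by $\|v\|_\infty$ uniformly in $\tau$ and $N$. Dividing by $N$ and letting $N\to\infty$ gives $\chi^+_{i_0}(\alpha^*,\tau)\leq\lambda$. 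A symmetric argument yields $\chi^-_{i_0}(\sigma,\beta^*)\geq\lambda$ for every strategy $\sigma$ of Min. Combining these two inequalities with the definition of the mean-payoff value shows that $(\alpha^*,\beta^*)$ is optimal and that $\chi^*=\lambda e$.

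The main obstacle is not in the deduction above but in \Cref{contraction_concu} itself. Propagating the purely combinatorial unichain condition -- a statement about pairs of \emph{pure} policies -- to a \emph{uniform} Hilbert-seminorm contraction of $T$, which is a nonsmooth min-max of affine maps in the continuous randomized policy variables, cannot be obtained by applying the Dobrushin coefficient of \Cref{th-dob} to a single stochastic matrix, since no single matrix $P^{\alpha,\beta}$ will generally witness the contraction rate of $T^q$. This is where the variational tools announced in the introduction (Mills' formula for directional derivatives of the value of a matrix game, combined with semidifferentiability of $T$) enter the picture.
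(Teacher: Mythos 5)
Your existence argument contains one genuine misstep: you invoke \Cref{contraction_concu} as saying that a suitable iterate $T^q$ of the Shapley operator \emph{itself} is a $\gamma$-contraction in Hilbert's seminorm. That theorem asserts this only for the Krasnoselskii--Mann damped operator $\shapleytheta=\theta I+(1-\theta)T$, and the claim for $T$ is false in general under the unichain hypothesis: take a two-state game with no choices and deterministic cyclic transitions, so that the only transition matrix $P$ is the $2\times 2$ cyclic permutation; the game is irreducible, hence unichain, yet $\hilbert{P^q}=1$ for every $q$, so no iterate of $T$ contracts in $\|\cdot\|_{\mathrm{H}}$. Killing this periodicity is precisely the purpose of the damping. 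The repair is immediate and stays within your scheme: apply \Cref{obs-contract} to $\shapleytheta$ (which by \Cref{contraction_concu} satisfies the hypothesis with $q=\uni$ and $\gamma=1-\theta^{\uni}$) to obtain $\shapleytheta(v)=\mu e+v$, and then use the lemma relating the two eigenproblems, which gives $T(v)=\lambda e+v$ with $\lambda=\mu/(1-\theta)$. With this one-line fix your route is sound, and it is not circular: the paper's proof of \Cref{contraction_concu} (via \Cref{selection_derive}, \Cref{D_k}, \Cref{lemme_barycentre} and \Cref{thm_unichain}) nowhere uses \Cref{th-exists-unichain}.

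Once corrected, your proof is genuinely different from the paper's. The paper establishes existence \emph{without} any contraction: it introduces the recession function $\hat{T}(x)=\lim_{s\to\infty}s^{-1}T(sx)$ and invokes Theorem~3.1 of \cite{ergodicity_conditions}, by which the ergodic equation is solvable iff every fixed point of $\hat{T}$ is constant; a selection argument writes $\hat{T}(\eta)=P^{\alpha,\beta}\eta$ for some randomized policies, so a fixed point $\eta$ is a harmonic vector of a unichain stochastic matrix and hence constant. That argument is softer (no rate, no damping, and it isolates exactly where unichainness is used), whereas your route leans on the heavy quantitative machinery but delivers, as a by-product, a geometric rate for the damped iteration. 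For the optimality of the selected policies, the paper merely asserts the claim (its proof is flagged as ``to be added''), while your verification argument --- the stage-wise saddle inequality $r_{i_k}^{\alpha^*_{i_k},\tau_k}+\sum_j P_{i_k,j}^{\alpha^*_{i_k},\tau_k}v_j\leq \lambda+v_{i_k}$, valid conditionally on any history against arbitrary randomized strategies, then telescoping to $J^N_{i_0}(\alpha^*,\tau)\leq N\lambda+v_{i_0}-\E^{\alpha^*,\tau}_{i_0}[v_{i_N}]$ with residual bounded by $\|v\|_\infty$, dividing by $N$, and the dual bound for $\beta^*$ --- is the standard and correct way to fill that gap, so on this point you supply a detail the paper omits.
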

\section{Relative value iteration}
Relative value iteration was
introduced in~\cite{WHITE1963373} to solve one player
stochastic mean-payoff games (i.e., average
cost Markov decision processes).
The ``vanilla'' value iteration
algorithm consists in computing the sequence $x_{k+1}=T(x_k)$,
starting from $x_0=0$.  Then, $x_k$ yields the value vector
of the game in horizon $k$, an so, we expect $x_k$
to go to infinity as $k \to \infty$. The idea of {\em relative}
value is to renormalize the sequence by additive constants.
We state in \Cref{RVI} a general version of relative value
iteration, allowing for {\em approximate}
dynamic programming oracles. This will allow us
to obtain complexity results in the Turing model of computation,
by computing a rational approximation of the value of the Shapley operator $T(x)$ at a given rational
vector $x$ up to a given accuracy.
\begin{algorithm}[hbpt]
\caption{Relative value iteration in approximate arithmetics}\label{RVI}
\begin{algorithmic}[1]
  \State \Input: A final requested numerical precision $\epsilon>0$ and a parameter $0< \eta\leq \epsilon /3$.
  An oracle $\shapleyapp$ which provides an $\eta$-approximation in the sup-norm of a Shapley operator $T$.
\State $ x \coloneqq  0 \in \RR^n$

\Repeat \State $x \coloneqq  \shapleyapp(x) - \mytop(\shapleyapp(x))e $
\Until { $ \norm{x-\shapleyapp(x)}_{\mathrm{H}} \leq \epsilon/3 $}
\State $\alpha \coloneqq  \mybot(\shapleyapp(x)-x); \beta \coloneqq  \mytop(\shapleyapp(x)-x)$\\

\Return $x,\alpha,\beta$
\Comment{The lower and upper escape rates of $T$ are included in the interval $[\alpha-\epsilon/3, \beta+\epsilon/3]$, which is of width at most $\epsilon$}
\end{algorithmic}
\end{algorithm}
\begin{theorem}\label{th-terminates}
  Suppose that $T$ is a Shapley operator. Then,
  \begin{enumerate}
    \item When it terminates, \Cref{RVI} 
      returns a valid interval of width at most $\epsilon$ containing
      the lower and upper escape rates of $T$.
\item If there is an integer $q$ and a scalar $0<\gamma<1$ such that
  $T^q$ is a $\gamma$-contraction in Hilbert's seminorm,
  and if $\eta$ is chosen small enough, in such a way that
  $\eta (12+24q/(1-\gamma)) \leq  \epsilon$, then 
  \Cref{RVI} terminates in at most
  \(
     q (\log\|T(0)\|_H+ \log 6 +|\log\epsilon| )/
  |\log \gamma|
  \)
  iterations.
  \end{enumerate}
\end{theorem}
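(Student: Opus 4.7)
The first claim follows directly from the structure of the stopping test. When the algorithm halts, set $\alpha = \mybot(\shapleyapp(x) - x)$ and $\beta = \mytop(\shapleyapp(x) - x)$: then $\alpha e \leq \shapleyapp(x) - x \leq \beta e$ with $\beta - \alpha = \hilbert{\shapleyapp(x) - x} \leq \epsilon/3$. Combining with $\|\shapleyapp(x) - \shapley(x)\|_\infty \leq \eta$ gives $(\alpha - \eta) e \leq \shapley(x) - x \leq (\beta + \eta) e$, so Observation~\ref{existence_escape_rate} places both escape rates of $\shapley$ inside $[\alpha - \eta, \beta + \eta] \subseteq [\alpha - \epsilon/3, \beta + \epsilon/3]$, whose width is $(\beta - \alpha) + 2\epsilon/3 \leq \epsilon$.

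For the iteration bound, let $x_k$ denote the iterate after $k$ updates and set $b_k \coloneqq \hilbert{x_k - \shapleyapp(x_k)}$. Since Hilbert's seminorm vanishes on $\RR e$, the additive renormalization subtracted in line 4 leaves $b_k$ invariant, and in the quotient space $\RR^n/\RR e$ the recurrence reduces to $\bar{x}_{k+1} = \overline{\shapleyapp}(\bar{x}_k)$ and $b_k = \hilbert{\bar{x}_k - \bar{x}_{k+1}}$. The heart of the argument is the approximate contraction
\[
  b_{k+q} \leq \gamma\, b_k + 4 q \eta \enspace .
\]
To prove it, a telescoping induction combining $\|\shapleyapp(y) - \shapley(y)\|_\infty \leq \eta$ with sup-norm nonexpansiveness of $\shapley$ yields $\|\shapleyapp^q(y) - \shapley^q(y)\|_\infty \leq q \eta$ for every $y$, hence $\hilbert{\overline{\shapleyapp}^q(\bar{y}) - \overline{\shapley}^q(\bar{y})} \leq 2 q \eta$ via $\hilbert{\cdot} \leq 2 \|\cdot\|_\infty$. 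Writing $b_{k+q} = \hilbert{\overline{\shapleyapp}^q(\bar{x}_k) - \overline{\shapleyapp}^q(\bar{x}_{k+1})}$ and inserting $\overline{\shapley}^q(\bar{x}_k)$ and $\overline{\shapley}^q(\bar{x}_{k+1})$ as pivots in the triangle inequality, while invoking the $\gamma$-contraction of $\overline{\shapley}^q$ on the middle term, produces the bound.

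Unfolding this recurrence gives $b_{jq} \leq \gamma^j b_0 + 4 q \eta/(1 - \gamma)$ with $b_0 = \hilbert{\overline{\shapleyapp}(0)} \leq \hilbert{\shapley(0)} + 2 \eta$. The hypothesis $\eta(12 + 24q/(1-\gamma)) \leq \epsilon$ is exactly $2 \eta + 4 q \eta/(1-\gamma) \leq \epsilon/6$, so $b_{jq} \leq \gamma^j \hilbert{\shapley(0)} + \epsilon/6$. Taking $j$ at least $(\log 6 + \log \hilbert{\shapley(0)} + |\log \epsilon|)/|\log \gamma|$ makes the residual term at most $\epsilon/6$ as well, hence $b_{jq} \leq \epsilon/3$ and the algorithm terminates at the latest at step $k = jq$, which is the claimed bound.

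The main technical difficulty is balancing the two sources of error against each other: the oracle noise $\eta$ amplifies linearly through the $q$ iterates needed to invoke the contraction factor $\gamma$ of $\shapley^q$, so the tolerance $\eta$ must be tightened by a factor proportional to $q/(1-\gamma)$. The equal split $\epsilon/6 + \epsilon/6$ of the stopping budget $\epsilon/3$ between the contractive residual $\gamma^j \hilbert{\shapley(0)}$ and the oracle tail $2\eta + 4q\eta/(1-\gamma)$ is what pins down the explicit constants $12$ and $24q/(1-\gamma)$ in the assumption on $\eta$.
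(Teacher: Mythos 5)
Your proof is correct, and it reaches the paper's bound with exactly the same constants, but it organizes the estimate differently. The paper compares the algorithm's trajectory with the \emph{exact} orbit $x_k=(R\circ T)^k(0)$, where $R(x)=x-\mytop(x)e$: it first shows by induction that the approximate orbit stays within $2q\eta/(1-\gamma)$ of the exact one in Hilbert's seminorm (with \Cref{lem-elt} supplying the $2\ell\eta$ per-block drift, exactly your $2q\eta$ estimate), separately proves the residual decay $\gamma^{\lfloor k/q\rfloor}\hilbert{T(0)}$ along the exact orbit (\Cref{lem-contractx}), and then assembles the stopping quantity through a four-term triangle inequality. You instead track a single trajectory and bound the tested residual $b_k$ directly through the perturbed-contraction recursion $b_{k+q}\leq \gamma b_k + 4q\eta$, which you unroll geometrically. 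Your route is slightly leaner --- one sequence, no analogue of \Cref{lem-contractx}, no four-term comparison --- and the arithmetic ($2\eta+4q\eta/(1-\gamma)\leq\epsilon/6$ for the oracle tail, $\gamma^j\hilbert{T(0)}\leq\epsilon/6$ for the contractive part) matches the paper's term for term, which is why the hypothesis on $\eta$ and the iteration count come out identical.

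One point should be tightened. You write the quotient recurrence as $\bar{x}_{k+1}=\overline{\shapleyapp}(\bar{x}_k)$, but the oracle $\shapleyapp$ is only an $\eta$-approximation and need not be additively homogeneous, so it does not canonically induce a self-map of $\R^n/\R e$; relatedly, your sup-norm telescoping $\|\shapleyapp^q(y)-T^q(y)\|_\infty\leq q\eta$ concerns iterates of $\shapleyapp$ \emph{without} the renormalizations the algorithm actually performs, and the values of $\shapleyapp$ may depend on the chosen representative. The fix is one line: replace $\shapleyapp$ by $\hat{T}(x)\coloneqq \shapleyapp(x-\mytop(x)e)+\mytop(x)e$, which is additively homogeneous, remains an $\eta$-approximation of $T$ in the sup-norm, and generates the same quotient trajectory as \Cref{RVI}; alternatively, compare $(R\circ\shapleyapp)^\ell$ with $(R\circ T)^\ell$ on representatives, telescoping in Hilbert's seminorm at a cost of $2\eta$ per step, which is precisely what the paper's \Cref{lem-elt} does. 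With either repair your argument is complete.
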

The proof is given in \S\ref{sec-proof-RVI},
it exploits the nonexpansiveness
of the operator $T$ in Hilbert's seminorm.

\section{Krasnoselskii-Mann damping}

We shall see that for turn-based or concurrent games, it is useful to replace
the original Shapley operator by a Krasnoselskii-Mann damped version of this operator.
This will allow the relative-value iteration algorithm to converge under milder conditions.
\begin{definition}
    If $T$ is a Shapley operator, we define $\shapleytheta = \theta I + (1-\theta)T$ where $I$ is the identity operator.
\end{definition}
We will call {\em Krasnoselskii-Mann operator} the $\shapleytheta$ operator. It is easy to show that it is also a Shapley operator.
The following observation relates the ergodic constant of a damped Shapley operator
with the ergodic constant of the original Shapley operator.
\begin{lemma}
  Let $T$ be a Shapley operator, $u\in \R^n$ and  $\lambda\in \R$. Then,
  $T(u) = \lambda e + u$ if and only if $\shapleytheta (u) = (1-\theta)\lambda e + u$.
  In particular, $\chi(T) = (1-\theta)^{-1} \chi(\shapleytheta)$ holds as soon as the ergodic
  equation $T(u) = \lambda e +u$ is solvable.
\end{lemma}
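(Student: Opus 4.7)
The plan is to reduce the equivalence to a direct algebraic manipulation, exploiting only the definition $T_\theta = \theta I + (1-\theta)T$ and the fact that $0 < \theta < 1$, and then to deduce the statement on escape rates from \Cref{existence_escape_rate}.

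First I would compute $T_\theta(u)$ explicitly. By definition, $T_\theta(u) = \theta u + (1-\theta)T(u)$. Assume $T(u) = \lambda e + u$. Substituting, we get
\[
T_\theta(u) = \theta u + (1-\theta)(\lambda e + u) = (1-\theta)\lambda e + u,
\]
which gives the forward implication. Conversely, assume $T_\theta(u) = (1-\theta)\lambda e + u$. Then $\theta u + (1-\theta)T(u) = (1-\theta)\lambda e + u$, i.e.\ $(1-\theta)T(u) = (1-\theta)\lambda e + (1-\theta)u$. Since $1-\theta > 0$, we may divide and recover $T(u) = \lambda e + u$. This establishes the equivalence.

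For the second assertion, suppose $T(u) = \lambda e + u$ is solvable. By \Cref{existence_escape_rate} applied to $T$ (with $\alpha = \beta = \lambda$), we have $\chi(T) = \lambda e$. The equivalence just proved shows that $T_\theta(u) = (1-\theta)\lambda e + u$, so the ergodic equation for $T_\theta$ is also solvable. Applying \Cref{existence_escape_rate} once more, this time to $T_\theta$, yields $\chi(T_\theta) = (1-\theta)\lambda e = (1-\theta)\chi(T)$, so $\chi(T) = (1-\theta)^{-1}\chi(T_\theta)$.

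There is no real obstacle here; the only thing to be careful about is the hypothesis $0 < \theta < 1$ which ensures that $1-\theta$ is both nonzero (to invert it in the converse implication) and positive (so that $T_\theta$ is still a Shapley operator, as asserted in the paragraph preceding the lemma, and so that \Cref{existence_escape_rate} applies to $T_\theta$).
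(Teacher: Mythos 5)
Your proof is correct and follows the same route as the paper: the paper treats the equivalence as immediate algebra from $T_\theta = \theta I + (1-\theta)T$ and derives $\chi(T) = (1-\theta)^{-1}\chi(T_\theta)$ from Observation~\ref{existence_escape_rate}, exactly as you do. Your write-up merely spells out the substitution, the division by $1-\theta>0$, and the two applications of the observation, all of which the paper leaves implicit.
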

\begin{proof}
  The equivalence is straightforward, and $\chi(T) = (1-\theta)^{-1} \chi(\shapleytheta)$
  follows from Obs.~\ref{existence_escape_rate}.
\end{proof}
We consider the iteration $x_{k+1} = T_\theta(x_k) - \mathsf{t}(T_\theta(x_k)) e$, obtained by applying
relative value iteration (as in~\Cref{RVI}) to the Krasnoselskii-Mann operator $T_\theta$, with
an arbitrary initial condition $x_0\in \R^n$.
Ishikawa showed that the ordinary Krasnoselskii-Mann iteration applied to a nonexpansive self-map of a finite dimensional normed space does converge, as soon as a fixed point exists~\cite{ishikawa}.
This entails the following result. 
\begin{theorem}[Compare with \cite{stott2020}]\label{th-stott}
  Let  $T: \R^n\to \R^n$ be a Shapley operator, and $0<\theta<1$. Then, the sequence $x_k$
  obtained by applying relative value iteration to the Krasnoselskii-Mann operator $T_\theta$
  converges 
if and only if 
$\exists u\in \R^n, \lambda\in \R$, $T(u) = \lambda e + u$. 
\end{theorem}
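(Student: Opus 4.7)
The plan is to lift the iteration to the quotient space $\R^n/\R e$, where the Hilbert seminorm becomes a genuine norm, and to recognize the iteration there as an ordinary Krasnoselskii-Mann scheme to which Ishikawa's theorem applies directly.

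First, I would observe that the normalization step in the relative value iteration is cosmetic at the level of equivalence classes. Writing $\bar{x}$ for the class of $x\in \R^n$ in the quotient $\R^n/\R e$, the recursion
\[ x_{k+1} = T_\theta(x_k) - \mytop(T_\theta(x_k)) e \]
satisfies $\bar{x}_{k+1} = \overline{T_\theta}(\bar{x}_k)$, because adding a multiple of $e$ does not change the equivalence class. Hence the sequence $(\bar{x}_k)$ is exactly the orbit of $\bar{x}_0$ under the induced map $\overline{T_\theta}$ on $\R^n/\R e$. Moreover, since $\mytop(x_k)=0$ for every $k\geq 1$, the map $\bar{x}_k\mapsto x_k$ is the (continuous) right inverse of the quotient projection restricted to the slice $\{y\in\R^n : \mytop(y)=0\}$. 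Consequently, convergence of $(x_k)$ in $\R^n$ is equivalent to convergence of $(\bar{x}_k)$ in the finite-dimensional normed space $(\R^n/\R e, \normH{\cdot})$.

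Next, I would note the key structural identity
\[ \overline{T_\theta} = \theta \, \overline{I} + (1-\theta)\, \overline{T}, \]
so that iterating $\overline{T_\theta}$ on $\R^n/\R e$ is precisely the classical Krasnoselskii-Mann scheme with damping parameter $\theta\in(0,1)$ applied to the nonexpansive map $\overline{T}$. By Ishikawa's theorem applied in the finite-dimensional normed space $(\R^n/\R e, \normH{\cdot})$, this iteration converges from every initial condition if and only if $\overline{T}$ admits a fixed point in the quotient. The existence of such a fixed point $\bar{u}$ is equivalent, by definition of the quotient, to the existence of $u\in\R^n$ and $\lambda\in\R$ with $T(u) = \lambda e + u$, which is precisely the ergodic equation.

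Putting the two steps together yields both implications: if $T(u)=\lambda e+u$ is solvable, then $\overline{T}$ has a fixed point, Ishikawa's theorem gives convergence of $(\bar{x}_k)$, and lifting through the normalization slice gives convergence of $(x_k)$; conversely, if $(x_k)$ converges in $\R^n$ to some $x^\ast$, then $(\bar{x}_k)$ converges to $\bar{x}^\ast$, so $\bar{x}^\ast$ is a fixed point of $\overline{T_\theta}$ (hence of $\overline{T}$), which provides a solution of the ergodic equation via the lemma preceding the theorem.

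The only delicate point, which I expect to be the main (though modest) obstacle, is the passage between $\R^n$ and its quotient: one must verify carefully that the normalization $y\mapsto y-\mytop(y)e$ is a continuous section of the quotient projection so that convergence downstairs lifts to convergence upstairs, and that no spurious drift along $\R e$ can occur once $\mytop(x_k)=0$ for $k\geq 1$. Everything else is a direct citation of Ishikawa's theorem together with the already established lemma identifying solutions of the ergodic equation with fixed points of $T_\theta$ modulo $\R e$.
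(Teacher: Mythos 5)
Your proposal is correct and follows essentially the same route as the paper's own proof: pass to the quotient $(\R^n/\R e, \normH{\cdot})$, note that the normalized iteration is exactly the Krasnoselskii--Mann scheme for the induced nonexpansive map $\overline{T}$, apply Ishikawa's theorem, and use $\mytop(x_k)=0$ to lift convergence back to $\R^n$. Your treatment of the continuous section $y\mapsto y-\mytop(y)e$ is in fact slightly more explicit than the paper's one-line justification of this lifting step.
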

A multiplicative variant
of this result was proved in Theorem~11 of~\cite{stott2020}. We provide
the proof in \S\ref{appendix-proofofcvg} for completeness.

\section{Contraction properties of unchain games under pure policies}
\label{sec-unichain}
  We define the following parameter, representing the minimal value
  of a non-zero off-diagonal transition probability,
  \( \pmin \!= \!\underset{i,j \in [n], i\neq j, \, (a,b)\in A\times B}{\min} \{P_{i,j}^{a,b} : P_{i,j}^{a,b} >0 \} \),
and set
\(
\theta \coloneqq \pmin/(1+\pmin)\).
For every pair of policies $\sigma,\tau$ of the two players, we set
\(
Q^{\sigma,\tau}\coloneqq \theta I+(1-\theta)P^{\sigma,\tau} \).
For any sequence of pairs of pure policies $\sigma_1, \tau_1, \ldots , \sigma_k, \tau_k$,
we define, for all $i\in [n]$, $S_{i}(\sigma_1,\tau_1,\dots,\sigma_k,\tau_k)\coloneqq  \{j\mid  [Q^{ \sigma_1, \tau_1} \dots Q^{ \sigma_k, \tau_k}]_{i j}>0\}$.
\begin{lemma}\label{lemma-uni}
  Suppose a concurrent game is unichain. Then, there is an integer $k\leq n$ such that for
  all $i_1,i_2\in [n]$, and for all sequences of pairs of pure policies $\sigma_1, \tau_1, \ldots , \sigma_k, \tau_k$,
  $S_{i_1}(\sigma_1,\tau_1,\dots,\sigma_k,\tau_k)\cap S_{i_2}(\sigma_1,\tau_1,\dots,\sigma_k,\tau_k)\neq\varnothing$.
\end{lemma}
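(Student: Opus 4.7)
The plan is to exploit the ``lazy'' structure of $Q^{\sigma,\tau}=\theta I+(1-\theta)P^{\sigma,\tau}$ together with the unichain assumption, via a counting argument on the sizes of the sets $S_i^{(\ell)}\coloneqq S_i(\sigma_1,\tau_1,\dots,\sigma_\ell,\tau_\ell)$.

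First, I would establish two elementary properties. The positivity of the diagonal of each $Q^{\sigma_\ell,\tau_\ell}$, which is at least $\theta>0$, yields the recursion $[Q^{\sigma_1,\tau_1}\cdots Q^{\sigma_{\ell+1},\tau_{\ell+1}}]_{ij}\geq \theta\,[Q^{\sigma_1,\tau_1}\cdots Q^{\sigma_\ell,\tau_\ell}]_{ij}$, hence the monotonicity $S_i^{(\ell)}\subseteq S_i^{(\ell+1)}$, together with $S_i^{(0)}=\{i\}$. Moreover, if the equality $S_i^{(\ell+1)}=S_i^{(\ell)}$ holds at some step, then $S_i^{(\ell)}$ is closed under $Q^{\sigma_{\ell+1},\tau_{\ell+1}}$, and hence closed under $P^{\sigma_{\ell+1},\tau_{\ell+1}}$ since $Q$ and $P$ share the same off-diagonal support. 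By the remark following \Cref{def-invariant} and the unichain assumption, $S_i^{(\ell)}$ then contains the non-empty final class of $P^{\sigma_{\ell+1},\tau_{\ell+1}}$.

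Next, I would argue by contradiction. Suppose that for some sequence of pure policies and some states $i_1,i_2$, the sets $S_{i_1}^{(k)}$ and $S_{i_2}^{(k)}$ are disjoint. By monotonicity, $S_{i_1}^{(\ell)}\cap S_{i_2}^{(\ell)}=\varnothing$ for every $\ell\leq k$. If there were a step $\ell+1$ at which $S_{i_1}^{(\ell+1)}=S_{i_1}^{(\ell)}$ and $S_{i_2}^{(\ell+1)}=S_{i_2}^{(\ell)}$ simultaneously, then both sets would contain the non-empty final class of $P^{\sigma_{\ell+1},\tau_{\ell+1}}$, contradicting disjointness. Hence, at each step $\ell\in\{1,\dots,k\}$, at least one of the strict inclusions $S_{i_1}^{(\ell-1)}\subsetneq S_{i_1}^{(\ell)}$ or $S_{i_2}^{(\ell-1)}\subsetneq S_{i_2}^{(\ell)}$ must hold.

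The conclusion follows by counting. Since $|S_{i_j}^{(0)}|=1$, the number of strict increases of $|S_{i_j}^{(\ell)}|$ from $\ell=0$ to $\ell=k$ is bounded by $|S_{i_j}^{(k)}|-1$, so the total number of growth events is at most $|S_{i_1}^{(k)}|+|S_{i_2}^{(k)}|-2\leq n-2$, using disjointness. Combined with the previous paragraph, this forces $k\leq n-2$; hence any $k\geq n-1$ guarantees $S_{i_1}^{(k)}\cap S_{i_2}^{(k)}\neq\varnothing$. The lemma therefore holds with $k=n-1\leq n$. The main technical step is the passage from the stability $S_i^{(\ell+1)}=S_i^{(\ell)}$ to closure under $P^{\sigma_{\ell+1},\tau_{\ell+1}}$ and the extraction of the final class via the unichain assumption; the rest is a clean pigeonhole counting.
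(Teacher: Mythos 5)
Your proof is correct and follows essentially the same route as the paper's (given there within the proof of \Cref{thm_unichain}): monotonicity of the sets $S_i^{(\ell)}$ from the positive diagonal of the $Q^{\sigma,\tau}$, the observation that stability of $S_i^{(\ell)}$ forces closure under $P^{\sigma_{\ell+1},\tau_{\ell+1}}$ and hence containment of the final class, and a pigeonhole count to derive the contradiction with disjointness. Your per-set counting of growth events is marginally cleaner bookkeeping than the paper's argument on the cardinalities of the unions $S_{i_1}^{(\ell)}\cup S_{i_2}^{(\ell)}$, and it even yields the slightly sharper bound $k\leq n-1$.
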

We call the {\em unichain index} of the game, and denote by $\uni$ the smallest integer
$k$ satisfying the property of~\Cref{lemma-uni}.
  Similarly, we call {\em irreducibility index} of an irreducible game, and denote by $\irr$, the smallest integer $k$
  such that for every sequence of pure policies $\sigma_1, \tau_1, \cdots,  \sigma_k, \tau_k$,
  the matrix $Q^{ \sigma_1, \tau_1} \dots Q^{ \sigma_k, \tau_k}$ is positive. We have
  $1\leq \uni \leq \irr$.

  The following result will allow us to obtain a geometric contraction rate.
  The proofs of this theorem and of the next proposition shows in particular
  that $\uni\leq n$ if the game is unichain
  and $\irr \leq n$ if the game is irreducible.

\begin{theorem}\label{thm_unichain}
  Let us suppose that a concurrent game with $n$ states is unichain, with unichain index $k=\uni$.
  Then, for all sequences $\sigma_1, \tau_1, \cdots,  \sigma_k, \tau_k$
  of pairs of pure policies of the two players,
\(%
\norm{Q^{ \sigma_1, \tau_1} \dots Q^{ \sigma_k, \tau_k}}_{\mathrm{H}}  \leq 1- \theta^k \).
\end{theorem}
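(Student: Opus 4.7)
The plan is to reduce the bound to an application of Dobrushin's ergodicity coefficient formula (\Cref{th-dob}), and to use the Krasnoselskii-Mann damping of $P^{\sigma,\tau}$ into $Q^{\sigma,\tau}=\theta I+(1-\theta)P^{\sigma,\tau}$ to obtain a uniform lower bound on the ``admissible'' entries. By \Cref{th-dob}, writing $M \coloneqq Q^{\sigma_1,\tau_1}\cdots Q^{\sigma_k,\tau_k}$, it suffices to show that for every pair of distinct indices $i_1,i_2\in [n]$,
\[
\sum_{j\in [n]} \min(M_{i_1 j}, M_{i_2 j}) \;\geq\; \theta^k .
\]

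The first key observation is a lower bound on the non-zero entries of each damped transition matrix. Since $\theta = \pmin/(1+\pmin)$, we have $(1-\theta)\pmin = \pmin/(1+\pmin) = \theta$, so $Q^{\sigma,\tau}_{ij} \geq \theta$ whenever either $i=j$ (thanks to the $\theta I$ term) or $P^{\sigma,\tau}_{ij}>0$ (thanks to the definition of $\pmin$). In other words, every non-zero entry of any $Q^{\sigma,\tau}$ is at least $\theta$. This is the crucial gain provided by the Krasnoselskii-Mann damping: without it one could only say $P^{\sigma,\tau}_{ii}\geq 0$, with no uniform lower bound on self-loops.

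Next, expanding the product $M_{ij} = \sum_{i=i_0,i_1,\ldots,i_k=j} \prod_{\ell=1}^{k} Q^{\sigma_\ell,\tau_\ell}_{i_{\ell-1},i_\ell}$, each term corresponds to a path from $i$ to $j$ through the successive transition graphs of $Q^{\sigma_\ell,\tau_\ell}$. If $j\in S_i(\sigma_1,\tau_1,\ldots,\sigma_k,\tau_k)$, then at least one such path has every factor strictly positive, hence by the uniform bound above, every factor is at least $\theta$, giving $M_{ij}\geq \theta^k$. Applying this to an arbitrary pair $i_1,i_2$, the definition of the unichain index $\uni=k$ (\Cref{lemma-uni}) provides an index $j_0 \in S_{i_1}(\sigma_1,\tau_1,\ldots,\sigma_k,\tau_k)\cap S_{i_2}(\sigma_1,\tau_1,\ldots,\sigma_k,\tau_k)$, so that $\min(M_{i_1 j_0}, M_{i_2 j_0}) \geq \theta^k$, and the required lower bound on $\sum_j \min(M_{i_1 j},M_{i_2 j})$ follows.

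The conclusion then reads $\delta(M)\leq 1-\theta^k$ by \Cref{th-dob}, which is exactly the claimed bound. The main conceptual step (and the only non-routine one) is recognizing that damping converts the qualitative unichain information ``some path exists'' into the quantitative statement ``the corresponding entry is at least $\theta^k$''; the rest is a direct translation between Dobrushin's coefficient and products of row distributions. An incidental bookkeeping step, which can be settled by the same path argument applied to the graph of reachability of $M$, also shows that $\uni\leq n$: the increasing sequence of sets $S_i^{(\ell)}$ must merge across any two initial states within at most $n$ steps once the unichain assumption and the self-loops of the $Q^{\sigma_\ell,\tau_\ell}$ are combined.
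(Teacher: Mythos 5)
Your proof is correct and follows essentially the same route as the paper's: you lower-bound every non-zero entry of each damped matrix $Q^{\sigma,\tau}$ by $\theta$ (using $(1-\theta)\pmin=\theta$), propagate this along paths to get $M_{ij}\geq\theta^k$ for $j\in S_i$, invoke the intersection property defining $\uni$ to find a common column, and conclude via the Dobrushin formula of \Cref{th-dob}. The only difference is cosmetic: the paper's proof also embeds the pigeonhole argument (stabilizing sets closed under a unichain matrix must contain its final class) that establishes $\uni\leq n$, which you correctly treat as already supplied by \Cref{lemma-uni} and only sketch in passing.
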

The following proposition improves the bound on the contraction rate provided by~\Cref{thm_unichain}, in the special
case of irreducible games.
\begin{proposition}\label{prop-irr}
    Let us suppose that a concurrent game with $n$ states is irreducible, and let $k=\irr$ be the irreducibility index of the game.  Then, for all sequences $\sigma_1, \tau_1, \cdots,  \sigma_k, \tau_k$
    of pairs of pure policies of the two players,
\(%
\norm{Q^{ \sigma_1, \tau_1} \dots Q^{ \sigma_k, \tau_k}}_{\mathrm{H}}  \leq 1- n \theta^k\).
\end{proposition}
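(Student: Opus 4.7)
The approach I propose is to apply Dobrushin's ergodicity coefficient formula (\Cref{th-dob}) to the product $M \coloneqq Q^{\sigma_1,\tau_1}\cdots Q^{\sigma_k,\tau_k}$, which is stochastic since each factor is. It suffices to show that every entry $M_{ij}$ satisfies $M_{ij} \geq \theta^k$; then, for any pair $i_1 < i_2$,
\[
\sum_{j \in [n]} \min(M_{i_1 j}, M_{i_2 j}) \geq n\,\theta^k,
\]
and the conclusion $\hilbert{M} = \delta(M) \leq 1 - n\theta^k$ follows directly from \Cref{th-dob}.

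The key quantitative ingredient is that every nonzero entry of any damped matrix $Q^{\sigma,\tau} = \theta I + (1-\theta)P^{\sigma,\tau}$ is at least $\theta$. Indeed, the diagonal entries satisfy $Q^{\sigma,\tau}_{ii} \geq \theta$ because of the $\theta I$ term, while for an off-diagonal pair $i \neq j$ with $P^{\sigma,\tau}_{ij} > 0$ one has $Q^{\sigma,\tau}_{ij} = (1-\theta) P^{\sigma,\tau}_{ij} \geq (1-\theta)\pmin = \theta$, using the algebraic identity $(1-\theta)\pmin = \theta$ which is the very definition of $\theta$. Expanding $M_{ij}$ as a sum over walks $i = i_0 \to i_1 \to \cdots \to i_k = j$ and using the definition of $\irr$, at least one such walk has all transitions active (i.e. each $Q^{\sigma_t,\tau_t}_{i_{t-1},i_t}$ is positive, hence $\geq \theta$), giving $M_{ij} \geq \theta^k$.

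To justify the remark $\irr \leq n$, I would argue by tracking the set $S_t$ of states reachable from a fixed initial state $i$ after $t$ multiplications, i.e.\ the set of $j$ with $[Q^{\sigma_1,\tau_1}\cdots Q^{\sigma_t,\tau_t}]_{ij} > 0$. Since $Q^{\sigma_{t+1},\tau_{t+1}}$ has positive diagonal, $S_t \subseteq S_{t+1}$. If $S_t \subsetneq [n]$, then by irreducibility of $P^{\sigma_{t+1},\tau_{t+1}}$ there must be an edge from $S_t$ to its complement in the support graph of $P^{\sigma_{t+1},\tau_{t+1}}$, whence $|S_{t+1}| \geq |S_t|+1$. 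Starting from $|S_0|=1$, this forces $S_{n-1} = [n]$, and repeating the argument for every starting state yields $\irr \leq n$.

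The main conceptual obstacle is really encapsulated in the third step: one must recognize that Dobrushin's formula turns a uniform lower bound on the entries of a stochastic matrix into a uniform contraction rate, and that the correct uniform lower bound here is $\theta^k$ rather than the weaker $\theta^k / n$ or similar, because the damping $\theta I$ is chosen precisely so that every positive transition of $P^{\sigma,\tau}$ survives at magnitude at least $\theta$ in $Q^{\sigma,\tau}$. The rest is routine bookkeeping on walks in the layered graph of supports.
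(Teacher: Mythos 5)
Your proof is correct and follows essentially the same route as the paper's: by definition of $\irr$ the product $Q^{\sigma_1,\tau_1}\cdots Q^{\sigma_k,\tau_k}$ is positive, each nonzero entry of every factor is at least $\theta$ (indeed $(1-\theta)\pmin=\theta$ by the choice of damping), so every entry of the product is at least $\theta^k$, and \Cref{th-dob} then gives $\delta(M)\leq 1-n\theta^k$. The only (harmless) divergence is that you establish $\irr\leq n$ by a direct reachable-set growth argument under irreducibility, where the paper instead invokes the invariant-set argument from the proof of \Cref{thm_unichain}; both are valid, and yours is in fact self-contained.
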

The proofs of \Cref{thm_unichain} and~\Cref{prop-irr} are provided in~\Cref{sec-proof-unichain}.

\section{Solving concurrent and turn-based games by relative Krasnoselskii-Mann iteration}
We first establish a general bound for concurrent unichain games. Recall that $\theta=\pmin/(1+\pmin)$.
\begin{theorem}\label{contraction_concu}
  Let $\shapleytheta$ be the Krasnoselskii-Mann operator of a concurrent and unichain game, and
  Then, $\shapleytheta^{k}$ is a contraction in Hilbert's seminorm, with rate bounded by $1 - \theta^{\uni}$.
  Moreover, if the game is irreducible, the same rate is bounded by $1-n\theta^{\irr}$.\todo{SG: added the irreducible case}
\end{theorem}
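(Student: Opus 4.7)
The plan is to bound $\|T_\theta^k(y) - T_\theta^k(x)\|_H$ by integrating the directional derivatives of $T_\theta^k$ along the segment from $x$ to $y$, and to identify those derivatives as products of damped transition matrices to which \Cref{thm_unichain} (resp.\ \Cref{prop-irr}) applies. Throughout, $k = \uni$ in the unichain case and $k = \irr$ in the irreducible case.

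First, I would invoke Mills' classical formula for the directional derivative of the value of a matrix game. Applied to the Shapley operator \eqref{e-def-shapley-concurrent}, it shows that $T$ is semidifferentiable and that, for every $v, h \in \R^n$, there exists a pair of mixed policies $(\alpha^*, \beta^*)$ (optimal for $T(v)$, depending on $v$ and $h$) such that $DT_i(v)(h) = \sum_j P_{ij}^{\alpha^*_i, \beta^*_i} h_j$. Since $T_\theta = \theta I + (1-\theta) T$, this yields $DT_\theta(v)(h) = Q^{\alpha^*, \beta^*} h$, where $Q^{\alpha^*, \beta^*} := \theta I + (1-\theta) P^{\alpha^*, \beta^*}$. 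By the chain rule for semidifferentiable maps, iteration gives $DT_\theta^k(v)(h) = Q^{(k)} \cdots Q^{(1)} h$ for some mixed-policy damped matrices $Q^{(j)} = Q^{\alpha^{(j)}, \beta^{(j)}}$, the policies depending on $v, h$ and on the intermediate iterates $T_\theta^{j-1}(v)$.

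Next, I would reduce to pure policies. Writing $\alpha_i = \sum_a \alpha_i(a) \delta_a$ and $\beta_i = \sum_b \beta_i(b) \delta_b$, and setting $\alpha(\sigma) = \prod_i \alpha_i(\sigma(i))$, $\beta(\tau) = \prod_i \beta_i(\tau(i))$, one checks by a direct row-by-row computation that
\[
Q^{\alpha, \beta} \;=\; \sum_{(\sigma, \tau) \in \purepolpair} \alpha(\sigma)\, \beta(\tau)\, Q^{\sigma, \tau}.
\]
Consequently $Q^{(k)} \cdots Q^{(1)}$ is a convex combination of pure-policy products $Q^{\sigma_k, \tau_k} \cdots Q^{\sigma_1, \tau_1}$. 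By \Cref{thm_unichain}, every such pure-policy product has Hilbert operator seminorm at most $1 - \theta^{\uni}$, and since the Hilbert operator seminorm on $\R^n/\R e$ is a norm, hence convex, we obtain $\|DT_\theta^k(v)(h)\|_H \leq (1 - \theta^{\uni})\|h\|_H$. In the irreducible case, substituting \Cref{prop-irr} for \Cref{thm_unichain} yields the sharper bound $1 - n\theta^{\irr}$.

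Finally, since $T_\theta^k$ is $1$-Lipschitz in the sup-norm, the path $t \mapsto T_\theta^k\bigl(x + t(y-x)\bigr)$ is absolutely continuous on $[0,1]$, and, by semidifferentiability and the chain rule, its derivative equals $DT_\theta^k\bigl(x + t(y-x)\bigr)(y-x)$ for almost every $t$. The fundamental theorem of calculus then gives
\[
T_\theta^k(y) - T_\theta^k(x) \;=\; \int_0^1 DT_\theta^k\!\bigl(x + t(y-x)\bigr)(y-x)\, dt,
\]
and applying the Hilbert-seminorm bound of the previous paragraph inside the integral yields the desired inequality. The main obstacle is to make the chain-rule / integration argument fully rigorous despite the non-smoothness of $T_\theta$ and the possibly continuous action spaces $A(i), B(i)$: one must carefully invoke Mills' theorem for values of matrix games with continuously parameterized payoffs, verify that semidifferentials compose correctly along the trajectory, and check that the convex-combination bound passes through both the composition and the path integral.
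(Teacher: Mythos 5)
Your proposal is correct and takes essentially the same route as the paper's own proof: Mills' theorem via the semidifferentiability and selection result of \Cref{selection_derive}, the chain rule for semidifferentiable maps, integration of the almost-everywhere derivative along the segment (Rademacher plus absolute continuity), and reduction from randomized to pure policies by convexity of the Hilbert operator seminorm, before invoking \Cref{thm_unichain} and \Cref{prop-irr}. The only cosmetic difference is that your finite convex combination $Q^{\alpha,\beta}=\sum_{(\sigma,\tau)}\alpha(\sigma)\beta(\tau)Q^{\sigma,\tau}$ must, for compact action spaces, be replaced by the integral representation over the product measure $\nu^{\alpha,\beta}$ as in~\eqref{e-integral} — precisely the point you already flag at the end.
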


The theorem is proved in \Cref{proof-km-concur}.
Combining this result with~\Cref{th-terminates},
we obtain the following result, in which 
we denote by $\|r\|_\infty\coloneqq \max_{i,a,b} |r_{i}^{ab}|$ the sup-norm of the payment function.
\begin{corollary}\label{cor-complexity-concur}
  Let $T$ be the Shapley operator of a concurrent unichain game,
  and $\epsilon \in (0,1)$. \Cref{RVI}, applied to the Krasnoselskii-Mann operator $\shapleytheta$, with the precision
        $\eta$ prescribed in~\Cref{th-terminates},
  provides a $\epsilon$-approximation of the value of the game in at most
  \(
  (|\log(\epsilon)| + \log(1-\theta) + \log 12 + \log \|r\|_\infty )\uni \theta^{-\uni} 
  \)
  iterations.
\end{corollary}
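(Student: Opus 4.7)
The plan is to combine \Cref{th-terminates} with \Cref{contraction_concu}. From \Cref{contraction_concu}, since the game is unichain, $\shapleytheta^{\uni}$ is a $\gamma$-contraction in Hilbert's seminorm with $\gamma = 1 - \theta^{\uni}$. Plugging $q = \uni$ and this $\gamma$ into \Cref{th-terminates} applied to the Shapley operator $\shapleytheta$ --- valid because the requirement on $\eta$ in \Cref{th-terminates} is precisely the precision condition assumed in the corollary --- bounds the number of iterations of \Cref{RVI} by
\[
\uni \, \bigl( \log \|\shapleytheta(0)\|_H + \log 6 + |\log \epsilon| \bigr) \big/ |\log(1 - \theta^{\uni})| .
\]

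The remaining work is elementary estimation of each factor by the quantities appearing in the statement. First, the inequality $-\log(1 - x) \geq x$ valid on $[0, 1)$ yields $1 / |\log(1 - \theta^{\uni})| \leq \theta^{-\uni}$. Second, evaluating the concurrent Shapley operator formula \eqref{e-def-shapley-concurrent} at $v = 0$ gives $\|T(0)\|_\infty \leq \|r\|_\infty$; combining this with $\shapleytheta(0) = (1 - \theta) T(0)$ and the elementary fact $\|\cdot\|_H \leq 2 \|\cdot\|_\infty$ produces $\|\shapleytheta(0)\|_H \leq 2 (1 - \theta) \|r\|_\infty$, and hence $\log \|\shapleytheta(0)\|_H + \log 6 \leq \log 12 + \log(1 - \theta) + \log \|r\|_\infty$. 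Substituting both estimates into the iteration bound delivers the announced inequality.

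Finally, I would verify that the output is indeed an $\epsilon$-approximation of the value of the mean-payoff game. The unichain assumption and \Cref{th-exists-unichain} guarantee a solution to $T(u) = \lambda e + u$, so $\chi(T) = \lambda e$; the lemma preceding \Cref{th-stott} then gives $\chi(\shapleytheta) = (1-\theta)\lambda e$, and the interval of width at most $\epsilon$ returned by \Cref{RVI} brackets $(1-\theta)\lambda$ so that dividing by $1-\theta$ brackets $\lambda$ (the factor $1/(1-\theta)$ being absorbed in the $\log(1-\theta)$ already appearing in the bound). The substantive analytical input --- the geometric contraction rate in Hilbert's seminorm --- is entirely handled by \Cref{contraction_concu}; the rest, which is what I would execute, is the routine logarithmic bookkeeping sketched above, and I foresee no genuine obstacle beyond correctly propagating the constants.
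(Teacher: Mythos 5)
Your argument is correct and coincides with the paper's own proof: both combine \Cref{contraction_concu} (giving that $\shapleytheta^{\uni}$ is a $(1-\theta^{\uni})$-contraction in Hilbert's seminorm) with \Cref{th-terminates}, then use the concavity bound $|\log(1-\delta)|^{-1}\leq \delta^{-1}$, the estimate $\hilbert{\shapleytheta(0)}\leq 2(1-\theta)\norm{r}_\infty$, and the rescaling $\chi(\shapleytheta)=(1-\theta)\chi(T)$, which the paper handles by running \Cref{RVI} at precision $(1-\theta)\epsilon$. Your closing remark that the factor $1/(1-\theta)$ is absorbed into the $\log(1-\theta)$ term is exactly the same (equally brief) bookkeeping the paper itself performs, so the proposal matches the published proof in both substance and detail.
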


We now consider the special case of turn-based games. Then, the value is a rational number,
and there are optimal pure policies. We now apply our approach
to compute exactly the value and to find optimal pure policies.

\begin{assumption}\label{a-integer}
We now assume that the probabilities $P_{i,j}^{a,b} $ are rational numbers with a common denominator
denoted by $M$. We also assume that the payments $r_i^{a,b}$ are {\em integers}.
\end{assumption}

\begin{lemma}[{Coro.~of~\cite{skomra_bounds}}]\label{mateusz}
Let $P$ be a
 $n\times n$ unichain matrix whose entries
are rational numbers with a common denominator $M$.
Then, the entries of the unique invariant measure of $P$ are rational numbers of denominator
at most $nM^{n-1}$.
  \end{lemma}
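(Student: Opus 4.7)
The plan is to give an explicit Cramer-type formula for the invariant measure, clear denominators using the integer matrix $MP$, and then bound the resulting common denominator by an elementary combinatorial argument based on row-stochasticity.

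First, I would use the fact that since $P$ is unichain, the kernel of $(I-P)^T$ is one-dimensional, so the system $(I-P)^T \pi = 0$ together with the normalization $e^T \pi = 1$ admits a unique solution. Applying Cramer's rule (or equivalently the matrix-tree theorem for Markov chains) yields the explicit formula
\[
\pi_i \;=\; \frac{\det\bigl((I-P)^{(i,i)}\bigr)}{\sum_{j=1}^n \det\bigl((I-P)^{(j,j)}\bigr)}\,,
\]
where $(I-P)^{(i,i)}$ denotes the $(n-1)\times(n-1)$ principal submatrix of $I-P$ obtained by removing the $i$-th row and column. For transient $i$, both this minor and $\pi_i$ vanish, and the denominator is positive thanks to the unichain assumption.

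Next, I would clear denominators. Since $M(I-P)$ has integer entries, $T_i \coloneqq M^{n-1}\det\bigl((I-P)^{(i,i)}\bigr)$ is an integer for each $i$. Writing $D \coloneqq \sum_j T_j$, we have $\pi_i = T_i/D$ with $T_i,D\in\mathbb{Z}$, so the denominator of $\pi_i$ in lowest terms divides $D = M^{n-1}\sum_j \det\bigl((I-P)^{(j,j)}\bigr)$.

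It remains to prove $\sum_j \det\bigl((I-P)^{(j,j)}\bigr)\leq n$. By the matrix-tree theorem for Markov chains, $\det\bigl((I-P)^{(j,j)}\bigr)$ equals the sum, over spanning in-trees $T$ rooted at $j$, of $\prod_{(u,v)\in T}P_{u,v}$. Such an in-tree amounts to a choice of a parent $f(u)\in [n]$ for every non-root vertex $u\neq j$, subject to acyclicity; dropping the acyclicity constraint only enlarges the nonnegative sum, and row-stochasticity of $P$ gives
\[
\det\bigl((I-P)^{(j,j)}\bigr)\;\leq\;\sum_{f\colon[n]\setminus\{j\}\to[n]}\prod_{u\neq j}P_{u,f(u)}\;=\;\prod_{u\neq j}\sum_{v\in[n]}P_{u,v}\;=\;1\,.
\]
Summing over $j\in[n]$ yields $D\leq n M^{n-1}$, which completes the argument. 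The only nontrivial ingredient is the Cramer/matrix-tree expression; the bound on the sum of principal minors is then a one-line counting argument and presents no real obstacle. A self-contained alternative is to use the principal-minor expansion $\det\bigl((I-P)^{(j,j)}\bigr)=\sum_{S\subseteq[n]\setminus\{j\}}(-1)^{|S|}\det(P[S])$ and combine with the tree interpretation to derive the same inequality.
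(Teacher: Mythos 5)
Your proof is correct. Note that the paper does not actually prove this lemma at all: it is stated as a corollary of the separation bounds of \cite{skomra_bounds}, so your argument supplies a self-contained elementary derivation where the paper defers to a reference. Your route — the adjugate/Cramer representation $\pi_i \propto \det\bigl((I-P)^{(i,i)}\bigr)$, integrality of $M^{n-1}\det\bigl((I-P)^{(j,j)}\bigr)$, and the inequality $\sum_j \det\bigl((I-P)^{(j,j)}\bigr)\leq n$ obtained from the directed matrix-tree theorem by relaxing acyclicity — is exactly calibrated to the stated constant $nM^{n-1}$: the decisive point is that row-stochasticity makes each rooted in-tree sum at most $1$, which a generic Hadamard-type determinant estimate would not deliver (it would introduce factors exponential in $n$ beyond $M^{n-1}$). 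Two details are worth making explicit in a polished write-up. First, the unichain hypothesis enters twice: it gives $\operatorname{rank}(I-P)=n-1$, so that $\operatorname{adj}(I-P)$ is a rank-one matrix of the form $c\,e\pi^\top$ (this is the cleanest justification of your cofactor formula, including the vanishing at transient states), and it guarantees strict positivity of the normalizing sum, since every state has access to the unique final class, so for each recurrent root $j$ some in-tree has positive weight. Second, nonnegativity of every term in the tree expansion is what legitimizes both the relaxation to arbitrary parent maps $f\colon[n]\setminus\{j\}\to[n]$ as an upper bound and the conclusion that $D=M^{n-1}\sum_j\det\bigl((I-P)^{(j,j)}\bigr)$ is a positive integer divisible by the reduced denominator of each $\pi_i$; you gesture at both points, and they hold as you state them. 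The closing alternative via the expansion $\det\bigl((I-P)^{(j,j)}\bigr)=\sum_{S}(-1)^{|S|}\det(P[S])$ is not needed and, taken alone, would not give the sign control you get for free from the tree formula, so the main argument is the one to keep.
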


When \Cref{RVI} halts, returning a vector $x\in \R^n$, we select two pure policies $\sigma^*$ and $\tau^*$ that are optimal with respect to $x$, 
meaning that, for $i\in [n]$, we have:
\begin{align}\label{e-selrule}
  T_i(x) = \max_{b \in B(i)}
\Big( r_i^{\sigma^*(i),b} + \sum_{j\in [n]} P_{ij}^{\sigma^*(i),b} x_j\Big)
=\min_{a \in A(i) } \Big( r_i^{a,\tau^*(i)} + \sum_{j\in [n]} P_{ij}^{a,\tau^*(i)}
x_j\Big) \enspace.
\end{align}

\begin{theorem}\label{th-uni-turn}
  Consider a unichain turn-based stochastic game satisfying~\Cref{a-integer}.
  Let us choose $\epsilon= (1-\theta)(n^2 M^{2(n-1)})^{-1}$,
  so that Algorithm~\ref{RVI} applied to $\shapleytheta$ runs in at most\todo{SG: I fixed/optimized the choice of $\epsilon$}
  \begin{align}
  (\log(1-\theta) + 2\log n +  2(n-1)\log M + \log 12 + \log\|r\|_\infty )\theta^{-\uni}\uni \label{e-neweps}
  \end{align}
  iterations.
  Let $x^*$ be the vector returned by the algorithm. Let us select pure policies $\sigma^*$ and $\tau^*$ reaching respectively the minimum and maximum in the expression of $T(x^*)$, as in~\eqref{e-selrule}. Then,  these policies are optimal. 
\end{theorem}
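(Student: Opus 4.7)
The plan has two independent parts: (i) establish the iteration bound, and (ii) prove that the pure policies selected from $T(x^*)$ are actually optimal.

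For (i), I would simply substitute the prescribed value $\epsilon=(1-\theta)(n^2M^{2(n-1)})^{-1}$ into the iteration bound of \Cref{cor-complexity-concur}. Since $|\log\epsilon|=-\log(1-\theta)+2\log n+2(n-1)\log M$, the $-\log(1-\theta)$ term cancels (up to sign convention) with the $\log(1-\theta)$ appearing in the corollary, producing exactly the expression in \eqref{e-neweps}.

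For (ii), I would first convert the stopping data into a pointwise Bellman estimate for $T$. At termination the algorithm returns $(x^*,\alpha,\beta)$ with $\alpha e\leq \shapleyapp(x^*)-x^*\leq \beta e$ and $\beta-\alpha\leq \epsilon/3$; combining with $\|\shapleyapp-\shapleytheta\|_\infty\leq \eta\leq \epsilon/3$ gives $(\alpha-\eta)e\leq \shapleytheta(x^*)-x^*\leq (\beta+\eta)e$. Since $\shapleytheta=\theta I+(1-\theta)T$, dividing by $1-\theta$ yields $\tilde\alpha e\leq T(x^*)-x^*\leq \tilde\beta e$ with $\tilde\beta-\tilde\alpha\leq (\epsilon/3+2\eta)/(1-\theta)\leq \epsilon/(1-\theta)$. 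Now exploit the turn-based selection rule: for any pure policy $\tau$, the optimality of $\sigma^*$ in \eqref{e-selrule} gives $T(x^*)\geq r^{\sigma^*,\tau}+P^{\sigma^*,\tau}x^*$. Subtracting $x^*$ and taking inner product with the unique invariant distribution $\pi^{\sigma^*,\tau}$ of the unichain matrix $P^{\sigma^*,\tau}$ annihilates the transition contribution, so $\pi^{\sigma^*,\tau}\cdot(T(x^*)-x^*)\geq \pi^{\sigma^*,\tau}\cdot r^{\sigma^*,\tau}=\lambda^{\sigma^*,\tau}$. Combined with the upper bound $T(x^*)-x^*\leq \tilde\beta e$, this gives $\lambda^{\sigma^*,\tau}\leq \tilde\beta$; symmetrically $\lambda^{\sigma,\tau^*}\geq \tilde\alpha$ for any pure $\sigma$. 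Since the game value $\lambda^*$ also lies in $[\tilde\alpha,\tilde\beta]$, both $\lambda^{\sigma^*,\tau}-\lambda^*\geq 0$ and $\lambda^*-\lambda^{\sigma,\tau^*}\geq 0$ are bounded by $\epsilon/(1-\theta)$.

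The final step invokes the rationality gap. By \Cref{mateusz}, for every pair $(\sigma,\tau)$ of pure policies the invariant distribution $\pi^{\sigma,\tau}$ has entries with denominator dividing $nM^{n-1}$, hence, since the payments $r_i^{a,b}$ are integers, the stationary mean-payoff $\lambda^{\sigma,\tau}$ is a rational with denominator at most $D:=nM^{n-1}$. Two distinct such rationals differ by at least $1/D^2=1/(n^2M^{2(n-1)})$. With the chosen $\epsilon$, one has $\epsilon/(1-\theta)=1/(n^2M^{2(n-1)})$, so the nonnegative differences $\lambda^{\sigma^*,\tau}-\lambda^*$ and $\lambda^*-\lambda^{\sigma,\tau^*}$, being strictly smaller than this gap (by choosing $\eta$ with a strict inequality $\eta<\epsilon/3$, or equivalently by using the strict form of the stopping bound), must all vanish. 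This means $\max_\tau\lambda^{\sigma^*,\tau}=\lambda^*=\min_\sigma\lambda^{\sigma,\tau^*}$, so $(\sigma^*,\tau^*)$ is a pair of exactly optimal pure stationary policies.

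The main obstacle I anticipate is the careful bookkeeping of the $1/(1-\theta)$ factor that arises when passing between the escape rates of $T$ and $\shapleytheta$, together with the strict-versus-nonstrict inequality needed so that the rationality gap from \Cref{mateusz} can force exact equality; this is handled by choosing $\eta$ strictly below $\epsilon/3$ or, equivalently, by noting that the halting criterion is attained with a margin coming from the discrete update $x\mapsto \shapleyapp(x)-\mytop(\shapleyapp(x))e$.
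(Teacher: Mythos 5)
Your proposal is correct and follows essentially the same route as the paper's proof: extract the two-sided bound $(\alpha-\eta)e \leq \shapleytheta(x^*)-x^* \leq (\beta+\eta)e$ at termination, use the selection rule \eqref{e-selrule} to get a sub-invariance inequality for each pair $(\sigma^*,\tau)$, left-multiply by the invariant measure to isolate the stationary mean payoff (the paper keeps the damped matrix $\theta I+(1-\theta)P^{\sigma^*,\tau}$, which has the same invariant measure, rather than dividing by $1-\theta$ as you do --- a cosmetic difference), and conclude by the denominator bound of \Cref{mateusz}, with the iteration count obtained by substitution into \Cref{th-terminates}/\Cref{cor-complexity-concur}. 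The one place you diverge is the strict-versus-nonstrict issue you flagged: the paper needs no margin on $\eta$ because its separation lemma is already strict --- two distinct stationary values with denominators at most $q=nM^{n-1}$ differ by at least $1/(q(q-1)) > 1/q^{2}$ --- whereas your fix (taking $\eta<\epsilon/3$ strictly, which the precision prescribed in \Cref{th-terminates} in fact guarantees automatically) is an equally valid way to close that gap.
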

This theorem is proved in~\Cref{sec-proof-uni-turn}.

\section{Multiplicative Krasnoselskii-Mann Damping applied to Entropy Games}
\label{sec-entropy}
In the case of entropy games, the ergodic eigenproblem, for the operator $F$
defined in~\Cref{sec-entropy-games}, consists in finding $u\in\R^n$ and
$\lambda\in \R$ such that $\exp(\lambda) \exp(u)=  F(u)$.
Equivalently, $\lambda e +u = T(u)$ where $T=\log \circ F\circ \exp$.
\todo{BT : cette équation fait un min sur les $t$ mais aucun $t$ n'apparait dans ce qu'on minimise. Ce doit être un typo. SG. Merci basile, j'ai complete en precisant qu on minmax et somme sur les aretes}
If this equation is solvable, then $\exp(\lambda)$ is the value of the entropy game,
for all initial states $d\in \vertexsetD$.
To solve this equation,
we fix a positive number $\vartheta>0$, and consider the following ``multiplicative'' variant
of the Krasnoselskii-Mann operator:
\[
[\shapleymKM(v)]_d = \log\min_{t\in \vertexsetT, (d,t)\in\edges} \;\max_{p\in \vertexsetP,(t,p)\in\edges}   \big(\vartheta \exp(v_d) + \sum_{d'\in \vertexsetD,(p,d')\in\edges} m_{p,d'}\exp(v_{d'})\big)
\enspace .
\]
Unlike in the additive case, we do not perform a ``convex combination'' of the identity map and of the
Shapley operator, but we only add the ``diagonal term'' $\vartheta\exp(v_d)$, where $\vartheta$
can still interpreted as a ``damping intensity'', albeit in a multiplicative sense.
If $T(u) = \lambda e +u$, then, one readily checks that $\shapleymKM(u)= \mu e + u$,
where $\mu=\log(\vartheta+\exp(\lambda))$, and vice versa, so
the non-linear eigenproblems for $T$ and $\shapleymKM$ are equivalent.
As in the additive case, the damping intensity must be tuned
to optimize the complexity bounds. We shall say that the multiplicity
$m_{p,d'}$ is {\em off-diagonal} if there is no path $d'\to t\to p\to d'$ in
the graph of the game. Equivalently, for any choices of policies
$\sigma,\tau$ of the two players, the entry $m_{p,d'}$ does not
appear on the diagonal of the matrix $M^{\sigma,\tau}$, defined
in~\Cref{sec-entropy-games}.
Then, we denote by $\mbar$ the minimum of off-diagonal
multiplicities, observe that $\mbar$ is precisely the minimum
of all off-diagonal entries of the matrices $M^{\sigma,\tau}$
associated to all pairs of policies. We set $\vartheta\coloneqq \mbar$.

We shall say that an entropy game is {\em irreducible} if for every pair of policies
$\sigma,\tau$, the matrix $M^{\sigma,\tau}$ is irreducible.
The {\em irreducibility index} $\irr$ of an irreducible entropy game is the smallest integer
$k$ such that for all policies $\sigma_1,\tau_1,\dots,\sigma_k,\tau_k$, the matrix
$M^{\sigma_1\tau_1}\dots M^{\sigma_k\tau_k}$ has positive entries.
Arguing as in the case of stochastic concurrent games, we get that
$\irr\leq n$ as soon as the game is irreducible.
We define the {\em $l$-ambiguity} of the entropy game
\( \mathcal{A}_l \coloneqq \max_{d,d'\in\vertexsetD}\max_{\sigma_1,\tau_1,\dots,\sigma_l,\tau_l} (M^{\sigma_1\tau_1}\dots M^{\sigma_l\tau_l})_{d,d'}\).
Observe that \( (M^{\sigma_1\tau_1}\dots M^{\sigma_l\tau_l})_{d,d'}\) is the number of paths
from $d$ to $d'$ counted with multiplicities, in the finite horizon game induced
by the policies $\sigma_1,\tau_1,\dots,\sigma_l,\tau_l$
(this motivates the term ``$l$-ambiguity'').
If the game is irreducible,     
we define
the {\em ambiguity} of the game $\mathcal{A}\coloneqq \max_{1\leq l\leq \irr}\mathcal{A}_l^{1/l}$.
We set $W \coloneqq \max_{(p,d)\in \edges\cap (\vertexsetP\times\vertexsetD)} m_{p, d}$, and observe that\todo{SG: now bound
  $W\leq \mathcal{A}\leq n^{1-1/\irr}W$ instead of $n^{1-1/n}W$.}
$W\leq \mathcal{A}\leq n^{1-1/\irr}W$.
\begin{theorem}\label{km-mult}
  Let $\shapleymKM$ be the multiplicative Krasnoselskii-Mann operator of an irreducible entropy game. Then,
$\shapleymKM^{\irr}$ is a contraction in Hilbert's seminorm, with contraction rate bounded by
  $\frac{\mcst - 1}{\mcst + 1}$, where $\mcst \coloneqq  (1+\mathcal{A}/\mbar)^{\irr}$.
\end{theorem}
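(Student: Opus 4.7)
The plan is to transfer the problem from the additive setting of Hilbert's seminorm on $\R^n$ to the multiplicative setting of Hilbert's projective metric on $\R^n_{>0}$, where the Birkhoff-Hopf theorem becomes the natural contraction tool. Setting $y=\exp(v)$, one has $\shapleymKM = \log \circ G \circ \exp$, where the order-preserving positively homogeneous self-map $G$ of $\R^n_{>0}$ is
\[
G_d(y) = \min_{t\in\vertexsetT,\,(d,t)\in\edges}\,\max_{p\in\vertexsetP,\,(t,p)\in\edges}\,\bigl(\vartheta\, y_d + \sum_{d'\in\vertexsetD,\,(p,d')\in\edges} m_{p,d'}\, y_{d'}\bigr).
\]
Since $\|u-v\|_{\mathrm{H}}$ equals the Hilbert projective distance $d_{\mathrm{H}}(\exp u,\exp v)$, it suffices to show that $G^{\irr}$ is a $\frac{\mcst-1}{\mcst+1}$-contraction in $d_{\mathrm{H}}$.

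For each pair of pure policies $(\sigma,\tau)$, the linear selector $A^{\sigma,\tau}\coloneqq \vartheta I + M^{\sigma,\tau}$ is a nonnegative matrix, and $G(y)=\min_{\sigma}\max_{\tau}A^{\sigma,\tau}y$ state by state. Following the variational approach already used in the proof of \Cref{contraction_concu} (pointwise directional differentiation of an inf-sup of linear forms, combined with the chain rule for semidifferentiable maps), the Lipschitz constant of $G^{\irr}$ in $d_{\mathrm{H}}$ is bounded above by $\sup\kappa(\Pi)$, where $\Pi$ ranges over all products $A^{\sigma_1,\tau_1}\cdots A^{\sigma_{\irr},\tau_{\irr}}$ of $\irr$ selectors and $\kappa(\cdot)$ denotes the Birkhoff-Hopf contraction coefficient. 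It is thus enough to bound $\kappa(\Pi)$ uniformly.

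The heart of the argument is entrywise control of $\Pi$. Expanding by distributivity, $\Pi = \sum_{S\subseteq[\irr]} \vartheta^{|S|} N_S$, with $N_S = \prod_{i\notin S} M^{\sigma_i,\tau_i}$ taken in the natural order. Each $(d,d')$-entry of a length-$(\irr-l)$ factor $N_S$ is bounded by $\mathcal{A}_{\irr-l}\le \mathcal{A}^{\irr-l}$ (which follows from the definition of $\mathcal{A}$, with the convention $\mathcal{A}_0=1$), and summing over the $\binom{\irr}{l}$ subsets of size $l$ yields the uniform upper bound $\Pi_{d,d'}\le \sum_{l=0}^{\irr}\binom{\irr}{l}\vartheta^{l}\mathcal{A}^{\irr-l}= (\mbar+\mathcal{A})^{\irr}$. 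For the matching lower bound, I invoke the definition of the irreducibility index: for every fixed policy sequence and every pair $(d,d')$, there exists a walk $d=d_0,d_1,\ldots,d_{\irr}=d'$ with $(M^{\sigma_i,\tau_i})_{d_{i-1},d_i}>0$ at each step. Its contribution to $\Pi_{d,d'}$ is $\prod_i\bigl[\vartheta\delta_{d_{i-1},d_i}+(M^{\sigma_i,\tau_i})_{d_{i-1},d_i}\bigr]$, whose $i$-th factor is at least $\vartheta=\mbar$ at a ``stay'' step and at least $\mbar$ at a ``move'' step (by definition of $\mbar$ as the minimum off-diagonal multiplicity). Hence $\Pi_{d,d'}\ge \mbar^{\irr}$.

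Applying the Birkhoff-Hopf theorem, the projective diameter of $\Pi$ satisfies $\Delta(\Pi)\le 2\log\bigl((\mbar+\mathcal{A})^{\irr}/\mbar^{\irr}\bigr) = 2\log\mcst$, hence $\kappa(\Pi)\le\tanh(\Delta(\Pi)/4)\le \tanh((\log\mcst)/2)=\frac{\mcst-1}{\mcst+1}$, which concludes. The most delicate point in this plan is the reduction in the second paragraph from $G^{\irr}$ to products of linear selectors, since the optimal pure policies depend on the state and evolve along the iteration; one handles this exactly as in the proof of \Cref{contraction_concu} by observing that the Jacobian of $G$ at a point of $\R^n_{>0}$ has, at each row $d$, a row of some selector $A^{\sigma,\tau}$, so that the entrywise bounds established above apply verbatim to the relevant Jacobian products.
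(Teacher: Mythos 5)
Your proposal is correct and follows essentially the same route as the paper's proof: conjugating by $\exp$ to work with $\FmKM$, reducing via semidifferentiability and the chain rule to products of linear selectors $(\vartheta I + M^{\sigma,\tau})$, bounding entries of such products above by $(\mbar+\mathcal{A})^{\irr}$ through the same binomial expansion (the paper's \Cref{lem-comb}) and below by $\mbar^{\irr}$, and then applying the Birkhoff--Hopf theorem with the identical diameter and $\tanh$ computation yielding $\frac{\mcst-1}{\mcst+1}$. The only cosmetic difference is that you spell out the walk-by-walk lower bound on the entries of the selector product, which the paper states more tersely.
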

This result is proved in~\Cref{sec-proof-km-mult}.\todo{SG: complete appendix to deal with ambiguity: DONE}
We recall the following separation bound.
\begin{theorem}[Coro.\ of~\cite{icalp2022}]\label{th-sepentropy}
  Suppose two pairs of strategies yield distinct values in an entropy game with $n$ Despot's states. Then, these values differ at least by $\nu_{n}^{-1}$ where\todo{SG:I rewrote the bound in a clearer way}
  \[
  \nu_{n}\coloneqq
    2^{n} (n+1)^{8n}n^{2n^2+n+1} e^{4n^2}\max( 1,  W/ 2)^{4n^2} \enspace .
  \]
\end{theorem}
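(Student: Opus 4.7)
My plan is to deduce the separation bound from the algebraic-geometric separation estimate already established in~\cite{icalp2022}, after reducing the values of the entropy game to spectral radii of small nonnegative integer matrices. As observed in~\Cref{sec-entropy-games}, for any pair of pure policies $(\sigma,\tau)$, the value $R(\sigma,\tau)$ equals the maximal spectral radius among the diagonal blocks of $M^{\sigma,\tau}$ corresponding to the strongly connected components of $\mathcal{G}^{\sigma,\tau}$ accessible from $\bar d$. Each such block is an $r\times r$ nonnegative integer matrix with $r\leq n$ and all entries bounded above by $W$.

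Consequently, $R(\sigma,\tau)$ is a real algebraic number, realized as a root of the characteristic polynomial of one such block. This is a monic integer polynomial of degree at most $n$ whose coefficients, by Hadamard's inequality applied to the sums of principal minors, are bounded in absolute value by an explicit quantity polynomial in $n$ and in $W^n$. If two pairs of strategies yield distinct values $R_1\neq R_2$, then $R_1$ and $R_2$ are distinct roots of two such polynomials. A classical Mahler--Mignotte style separation bound applied to the resultant of these polynomials produces a lower estimate on $|R_1-R_2|$ whose shape is precisely $1/\nu_n$ with $\nu_n$ polynomial of degree $O(n^2)$ in $n$, $W$ and $e^{O(n^2)}$, matching the announced form.

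The bulk of the work is constant-counting: one must verify that the factors $2^n$, $(n+1)^{8n}$, $n^{2n^2+n+1}$, $e^{4n^2}$ and $\max(1,W/2)^{4n^2}$ in $\nu_n$ are exactly those produced by the separation estimate of~\cite{icalp2022} when specialized to our setting. Each factor has a natural source --- $2^n$ from the choice of strongly connected diagonal block, $(n+1)^{8n}$ and the main $n^{2n^2+n+1}$ exponent from the Mahler measure of the resultant of two degree-$\leq n$ integer polynomials, $e^{4n^2}$ and the $W$-power from Hadamard's bound on the coefficients of the characteristic polynomial iterated $O(n^2)$ times through the resultant --- so the main obstacle is purely book-keeping, not a new conceptual ingredient. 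Once this identification is carried out, the theorem follows directly as a corollary of the quoted separation estimate.
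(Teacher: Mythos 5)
Your overall strategy is exactly the one underlying the cited result: the paper itself does not reprove this theorem but quotes it as a corollary of \cite{icalp2022}, where the separation bound is obtained precisely as you describe --- the value of any pair of policies is the maximal Perron root of diagonal blocks of $M^{\sigma,\tau}$ (as recalled in \Cref{sec-entropy-games}), hence a root of a monic integer characteristic polynomial of degree at most $n$ with height controlled via Hadamard's inequality, and one concludes by a root-separation estimate for integer polynomials. So conceptually you have reconstructed the right proof. Two concrete problems remain, however.

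First, the step ``apply a Mahler--Mignotte separation bound to the resultant of these polynomials'' fails as stated: $\mathrm{Res}(P_1,P_2)=0$ whenever $P_1$ and $P_2$ share \emph{any} root, which can certainly happen here --- for instance both characteristic polynomials may be divisible by a power of $X$, and nothing in the hypothesis $R_1\neq R_2$ prevents $P_1=P_2$ with $R_1,R_2$ two distinct (even conjugate) roots of the same polynomial, in which case the resultant argument gives nothing. The standard repair, and the one effectively behind the bound of \cite{icalp2022}, is to bound the minimal distance between \emph{distinct} roots of the single product polynomial $P_1P_2$, of degree at most $2n$, via a Mahler-type separation bound applied to its squarefree part; your resultant must be replaced by an estimate of this kind.

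Second, the entire content of the statement is the explicit value of $\nu_n$, and your proposal defers all of the constant-counting while assigning ``natural sources'' to the factors that are partly wrong: the $2^n$ does not come from the choice of the strongly connected block (that choice costs nothing in the bound); factors such as $2^n$, the exponents $4n^2$, and $e^{4n^2}$ arise from the degree doubling $\deg(P_1P_2)\le 2n$ in the separation bound and from the Hadamard estimate $|c_k|\le\binom{n}{k}k^{k/2}W^k$ on the coefficients, whose height then gets raised to a power $O(n)$ through the Mahler measure. As written, you have argued that \emph{some} bound of the shape $n^{O(n^2)}e^{O(n^2)}\max(1,W)^{O(n^2)}$ holds, which is weaker than the stated theorem. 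To turn this into a proof of the statement you must either carry out the book-keeping against the explicit separation bound, or do what the paper does and cite the quantitative estimate of \cite{icalp2022} directly.
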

Then, using~\Cref{th-terminates}, we deduce:
\begin{theorem}\label{th-irr-entropy}
  Consider an irreducible entropy game, with irreducibility index $\irr$.
  Let us choose $\epsilon= (1+(\mbar+W)\nu_n)^{-1}$,\todo{SG: I changed the value of $\epsilon$ after optimizing the proof}
  so that Algorithm~\ref{RVI} applied to $\shapleymKM$ runs in at most\todo[color=red!30]{I changed the factor $(1+\mcst)/2$ to $\mcst/2$, see the proof} 
  \(
  (\log(1+(\mbar+W)\nu_n) + \log 6 )\irr \mcst/2
  \)
  iterations.
  Moreover, let $x^*$ be the vector returned by the algorithm. Let us select pure policies $\sigma^*$ and $\tau^*$ reaching respectively the minimum and maximum in the expression of $\shapleymKM(x^*)$. Then,  these policies are optimal. 
\end{theorem}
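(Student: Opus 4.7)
The proof combines Theorem~\ref{km-mult} (contraction of $\shapleymKM^{\irr}$) with the general termination bound of Theorem~\ref{th-terminates}, and uses the separation bound of Theorem~\ref{th-sepentropy} to guarantee that the extracted pure policies are optimal.

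\emph{Iteration count.} I would apply Theorem~\ref{th-terminates} to $\shapleymKM$ with $q=\irr$ and $\gamma=(\mcst-1)/(\mcst+1)$. The denominator $|\log\gamma|=\log\frac{\mcst+1}{\mcst-1}$ is bounded below by $2/\mcst$ via the elementary inequality $\log\frac{1+y}{1-y}\geq 2y$ for $y\in[0,1)$, applied with $y=1/\mcst$. The quantity $\|\shapleymKM(0)\|_H$ is controlled by direct inspection of its definition: each coordinate $[\shapleymKM(0)]_d=\log\min_t\max_p(\vartheta+\sum_{d'}m_{p,d'})$ lies in $[\log\mbar,\log(\mbar+nW)]$, so $\|\shapleymKM(0)\|_H\leq \log(1+nW/\mbar)$. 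Since $\nu_n$ grows at least like $(nW)^{\Theta(n^2)}$, one has $\log\|\shapleymKM(0)\|_H\leq \log(1+(\mbar+W)\nu_n)=|\log\epsilon|$, so that the terms $\log\|\shapleymKM(0)\|_H$, $\log 6$, and $|\log\epsilon|$ appearing in Theorem~\ref{th-terminates} can be consolidated into the single factor $\log(1+(\mbar+W)\nu_n)+\log 6$ of the statement. Combining with the upper bound $1/|\log\gamma|\leq \mcst/2$ yields exactly $\irr\mcst(\log(1+(\mbar+W)\nu_n)+\log 6)/2$ iterations.

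\emph{Optimality of $\sigma^*,\tau^*$.} At termination, the algorithm outputs $x^*$ with $\alpha e\leq \shapleymKM(x^*)-x^*\leq\beta e$ and $\beta-\alpha\leq\epsilon/3$. By choice of $\sigma^*$ (resp.\ $\tau^*$) as the inner minimizer (resp.\ maximizer) of $\shapleymKM(x^*)$, the one-player operators $\shapleymKM^{\sigma^*,\cdot}$ and $\shapleymKM^{\cdot,\tau^*}$ coincide with $\shapleymKM$ at $x^*$, so they satisfy the same bracket. Observation~\ref{existence_escape_rate} then places both of their escape rates in $[\alpha,\beta]$. For any pure policies $\sigma,\tau$ one has $\exp\circ\shapleymKM^{\sigma,\tau}\circ\log=(\vartheta I+M^{\sigma,\tau})\cdot$, whence $\chi(\shapleymKM^{\sigma,\tau})=\log(\mbar+\rho(M^{\sigma,\tau}))=\log(\mbar+R(\sigma,\tau))$; likewise $\chi(\shapleymKM)=\log(\mbar+V^*)$ where $V^*$ denotes the value of the entropy game. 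The bracket thus yields $|\log(\mbar+\max_\tau R(\sigma^*,\tau))-\log(\mbar+V^*)|\leq\epsilon/3$, and symmetrically for $\min_\sigma R(\sigma,\tau^*)$.

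\emph{Separation argument.} Suppose for contradiction that $\sigma^*$ is not optimal: there exists $\tau$ with $R(\sigma^*,\tau)-V^*=\Delta>0$. Theorem~\ref{th-sepentropy} forces $\Delta\geq \nu_n^{-1}$. Using $\log(1+x)\geq x/(1+x)$, one obtains a lower bound $|\log(\mbar+R(\sigma^*,\tau))-\log(\mbar+V^*)|\geq \Delta/(\mbar+V^*+\Delta)$, which, with $V^*\leq nW$ and $\Delta\leq\nu_n\cdot(\text{const})$, is of order $1/(\nu_n(\mbar+nW))$. The explicit choice $\epsilon=(1+(\mbar+W)\nu_n)^{-1}$ is calibrated so that $\epsilon/3$ is strictly smaller than this lower bound, producing a contradiction. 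Hence $\sigma^*$ is optimal, the symmetric argument gives optimality of $\tau^*$, and the saddle-point inequalities $R(\sigma^*,\tau)\leq V^*=R(\sigma^*,\tau^*)\leq R(\sigma,\tau^*)$ follow.

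The main technical obstacle lies in the separation argument: one must carefully propagate the additive error $\beta-\alpha$ in the logarithmic domain back into an absolute error on the entropy-game value $V^*$, and verify that the polynomial factors $\mbar+nW$ arising from the multiplicative Lipschitz constant of $\exp$ near $\log(\mbar+V^*)$ are absorbed by the enormous $\nu_n$ thanks to the explicit choice of $\epsilon$. In contrast, the iteration count is an essentially mechanical application of Theorems~\ref{km-mult} and~\ref{th-terminates}, relying only on elementary bounds for $|\log\gamma|$ and for $\|\shapleymKM(0)\|_H$.
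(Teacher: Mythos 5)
Your proof is correct and follows essentially the same route as the paper's: \Cref{km-mult} fed into \Cref{th-terminates} with \(\gamma=(\mcst-1)/(\mcst+1)\) and \(1/|\log\gamma|\leq \mcst/2\) (your inequality \(\log\frac{1+y}{1-y}\geq 2y\) with \(y=1/\mcst\) is equivalent to the paper's concavity argument), then policy extraction via the sub/super-invariance bracket at \(x^*\), then the separation bound of \Cref{th-sepentropy} transported to the log domain by a mean-value estimate — the only structural difference being that the paper works with the fixed-pair operators \(\shapleymKM^{\sigma^*,\tau}\) and the spectral radii \(\rho(\vartheta I+M^{\sigma^*,\tau})\) where you pass through the one-player operators, an immaterial variation (your identification of values with spectral radii is legitimate here precisely because irreducibility makes every \(M^{\sigma,\tau}\) irreducible). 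Two bookkeeping points, neither worse than the paper's own write-up: your ``consolidation'' of \(\log\|\shapleymKM(0)\|_{\mathrm{H}}\) into \(|\log\epsilon|\) literally yields \(2|\log\epsilon|+\log 6\) rather than \(|\log\epsilon|+\log 6\) (the term is doubly logarithmic, hence negligible, which is presumably why the paper's proof silently drops it), and your bracket at termination should carry the oracle error, i.e.\ \((\alpha-\eta)e\leq \shapleymKM(x^*)-x^*\leq(\beta+\eta)e\) as in the paper, while your factor-\((\mbar+nW)\)-versus-\((\mbar+W)\) discrepancy in the log-domain separation is the same slack-in-\(\nu_n\) issue implicit in the paper's own \Cref{e-sepeff}, which you at least acknowledge explicitly.
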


\section{Concluding Remarks}
We have established parameterized complexity bounds for relative value iteration applied to several
classes of stochastic games satisfying irreducibility conditions. These bounds rely on contraction properties in Hilbert's seminorm.
It would be interesting to see whether these contraction properties can also be exploited to derive
complexity bounds for policy iteration, instead of value iteration.
\bibliography{complement}

\appendix
\section{Example of turn-based stochastic mean-payoff game}
\label{sec-example}
\begin{figure}[htbp]
\caption{Example of a turn-based stochastic mean-payoff game. Min states are represented by squares; Max states are represented by circles; Nature states are represented by small diamonds. The payments made by Min to Max are shown on the arcs. For every Nature state, the next state is chosen with the uniform distribution among the successors. Optimal policies of Min and Max are shown in bold (red and blue arcs, respectively).}
 
\begin{center}
  \includegraphics{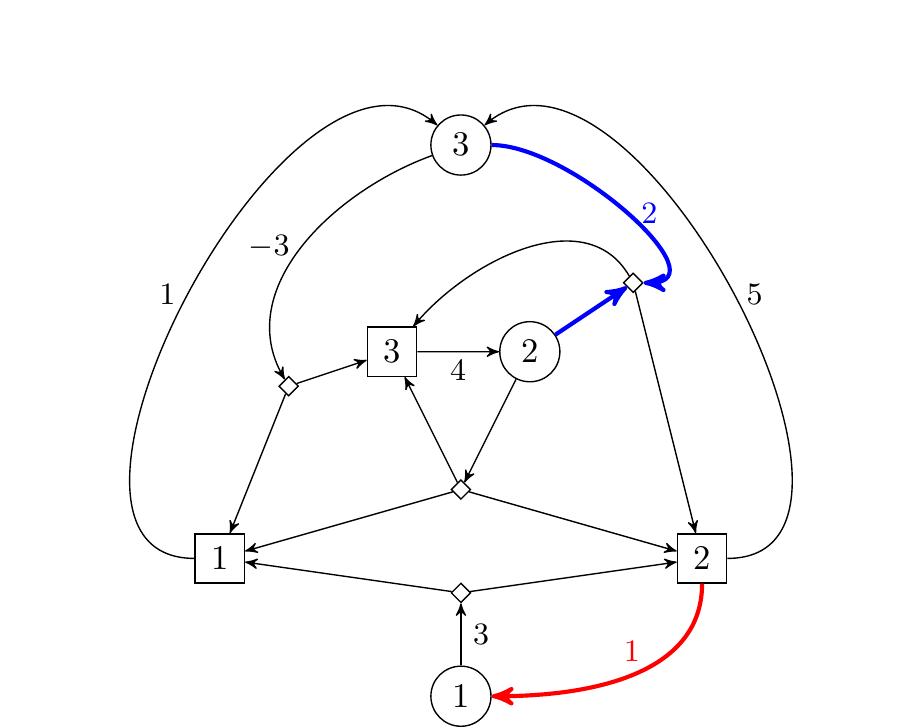}
\end{center}\label{ex-turnbased}
\end{figure}

An example of turn-based stochastic mean-payoff game is represented in~\Cref{ex-turnbased}.\todo[color=red!30]{SG: I chaned the convention on the figure, payments made by Min to Max for all arcs, check signs!}
The Shapley operator of this game is the map $T: \R^3\to \R^3$ given by
     \begin{align*}
    T_1(x) &=  1+ \max\big(2+ \frac{x_2+x_3}{2}, -3 + \frac{x_1+x_3}{2}\big),\\
    T_2(x) &= \min\Big( 5 + \max\big(2+ \frac{x_2+x_3}{2}, -3 + \frac{x_1+x_3}{2}\big),
    1+ 3 + \frac{x_1+x_2}{2}\Big)\enspace ,\\
    T_3(x) &=  4+ \max(\frac{x_2+x_3}{2}, \frac{x_1 + x_2 + x_3}{3}) \enspace .
     \end{align*}
The unichain index defined in~\Cref{sec-unichain} is $\uni=1$. Indeed, for all pairs\todo[color=blue!30]{SG: I explained the notation. I also added the explicit bound on the contraction rate, and explicit computation of the unichain index, check!}
     of policies $(\sigma_1,\tau_1)$, we have $S_1(\sigma_1,\tau_1)\supset \{1,3\}$, $S_3(\sigma_1,\tau_1)\supset \{2,3\}$,
     and $S_2(\sigma_1,\tau_1)\supset\{2,3\}$ if $\sigma_1$ sends Min state $2$
     to Max state $3$, and $S_2(\sigma_1,\tau_1)= \{1,2\}$ if $\sigma_1$ sends
     Min state $2$ to Max state $1$. In all cases, we have $S_i(\sigma_1,\tau_1)
     \cap S_j(\sigma_1,\tau_1)\neq\varnothing$ for $i\neq j$.
   We have $\pmin=1/3$, and $\theta=
     \pmin/(1+\pmin)=1/4$. It follows from~\Cref{contraction_concu} that the damped Shapley operator
     $T_\theta$ is a contraction of rate $3/4$.
     We know from~\Cref{th-exists-unichain} that the ergodic eigenproblem is solvable.
     By applying~\Cref{RVI}, we find $T(u)= \lambda e + u$ with $(-1,-0.5,0)$ and $\lambda = 3.75$.
     An approximation of $u$ of precision $<10^{-8}$ in the sup norm is reached after only $15$ iterations, to be compared with the precision
     of order $(3/4)^{15}\simeq 10^{-2}$ given by the theoretical upper bound, for the same number of iterations.
     Thus, the convergence may be faster in practice than the one shown
     in \Cref{cor-complexity-concur}.
     We deduce from $T(u)= \lambda e + u$ that the value of the mean-payoff game is $3.75$ regardless
     of the initial state. Optimal policies $\sigma$ and $\tau$ of both players are obtained by selecting the actions
     that achieve the minimum or the maximum in the expression of $T(u)$. The non-trivial
     actions of these optimal policies are as follows:
     from Min state $2$ (square at bottom right), go to Max state 3 (circle at the top level),
     from Max state 2 (circle at the middle level), and also from Max state 3, got to the top right state (diamond) of Nature. These actions are shown on~\Cref{ex-turnbased}. The stochastic matrix $P^{\sigma,\tau}$
     and payment vector $r^{\sigma,\tau}$ associated to these
     policies are given by
     \[
     r^{\sigma,\tau} = \left(\begin{array}{c}
       3
       \\
       4
       \\
       4
       \end{array}\right),\qquad 
     P^{\sigma,\tau}= \left(\begin{array}{ccc}
       0 & 1/2& 1/2\\
       1/2& 1/2& 0\\
       0 & 1/2& 1/2
       \end{array}\right) \enspace .
     \]
     The unique invariant measure of the matrix $P^{\sigma,\tau}$
     is $\pi=(1/4,1/2,1/4)$, and we have $\pi r^{\sigma,\tau}=15/4=3.75$,
     consistently with the value of the mean-payoff already found.

\section{Proof of~\Cref{obs-contract}}\label{sec-proof-obs}
Let $[x]\coloneqq x+\R e $ denote the equivalence
class of $x$ in $\R^n/\R e$.
Then, the induced map $\overline{T}:\R^n/\R e \to \R^n /\R e $,
$[x]\mapsto [T(x)]$, is 
such that $\overline{T}^q$ is a $\gamma$-contraction in $\|\cdot\|_H$.
Moreover, 
$\R^n/\R e$ equipped with $\|\cdot\|_H$ is a Banach space.
Hence, $\overline{T}^q$ has a unique fixed point $[u]$. Moreover,
$\overline{T}^{q}(\overline{T}([u])) = \overline{T}(\overline{T}^q([u]))=\overline{T}([u])$,
and by uniqueness of the fixed point of $\overline{T}^q$, we deduce
that $\overline{T}([u])=[u]$. This means precisely that $T(u)=\lambda e + u$
for some $\lambda \in \R$.

\section{Derivation of~\Cref{th-dob}}\label{sec-explain-dobrushin}
  The original statement of Dobrushin,
  given in~\cite[Section 1.4, (1.5'') and (1.19)]{Dobrushin1956I},
  and proved in~\cite{Dobrushin1956II}, shows that
  $\delta(M)$ is the operator norm of $M^T$ acting on the space
  $H=\{x\in \R^n\mid e\cdot x =0\}$, equipped with the $\ell_1$-norm
  $\|\cdot\|_1$.
  The space $(\R^n/\R e,\hilbert{\cdot})$ is the dual of $(H,\|\cdot\|_{1})$.
  Since an operator between two Banach spaces and its adjoint, acting on the dual
  spaces, have the same operator norms, it follows
  that $\hilbert{M}= \delta(M)$.

\section{Complements on stochastic zero-sum games}

For any pair $(\alpha,\beta)\in \randpolpair$ of randomized policies, we define the  vector $r^{\alpha,\beta}=(r_i^{\alpha_i,\beta_i})_{i\in [n]}\in \R^n$ and
 the stochastic matrix $P^{\alpha,\beta}=(P^{\alpha_i,\beta_i}_{ij})_{i,j\in [n]}$, according to~\eqref{e-int}.
Let us also define, the maps $T^{\alpha,\beta}$, $T^{\alpha}$ and 
$\Tbeta$ from $\R^n$ to itself, such that, for all $v\in \R^n$, we have
\begin{align*}
T^{\alpha,\beta} (v)&= r^{\alpha,\beta}+P^{\alpha,\beta} v\\
T^{\alpha} (v)_i &=    \sup_{\beta_i \in \Delta_{B(i)} }
\Big( r_i^{\alpha_i,\beta_i} + \sum_{j\in [n]} P_{ij}^{\alpha_i,\beta_i} v_j\Big)
\;\text{ for }i\in [n]\enspace,\\
\Tbeta (v) &= 
\inf_{\alpha_i \in \Delta_{A(i)} } 
\Big( r_i^{\alpha_i,\beta_i} + \sum_{j\in [n]} P_{ij}^{\alpha_i,\beta_i} v_j\Big)
\;\text{ for } i\in [n] 
\enspace .
\end{align*}
Then, for all $v\in\R^n$, there exists $(\alpha,\beta)\in \randpolpair$ such that $T(v)=T^{\alpha}(v)=\Tbeta(v)=T^{\alpha,\beta}(v)$.
Such policies are called optimal policies with respect to $v$,
and we denote their sets as follows:
\begin{subequations}\label{defopt}
\begin{gather}
\Optmin(v)= \{ \alpha\in \randpolMin\mid  T(v) = T^{\alpha}(v) \}\\
\Optmax(v)= \{ \beta\in\randpolMax \mid T(x) = \Tbeta(v) \}\enspace .
\end{gather}
\end{subequations}
\begin{remark}
Applying the above notations to pure policies $\alpha, \beta$ identified 
to the elements $\sigma\in \prod_{i\in [n]}A(i)$ and $\tau\in \prod_{i\in [n]} B(i)$ as in \Cref{repeated_games}, we get that for all $(\sigma,\tau)\in \purepolpair$, and all $i,j\in [n]$, we have 
\[ r^{\sigma,\tau}_i=r_i^{\sigma(i),\tau(i)}\quad \text{ and } \quad
P_{i,j}^{\sigma,\tau} = P_{i,j}^{\sigma(i),\tau(i)}\enspace .\]

Every pair $(\alpha,\beta)\in \randpolpair$ of randomized policies yields
a product probability measure $\nu^{\alpha,\beta}$ on the space of pairs
of pure policies $(\sigma,\tau)\in \purepolpair$,
given by $\nu^{\alpha,\beta} = (\alpha_1\otimes \beta_1)\otimes \dots\otimes (\alpha_n\otimes \beta_n)$.
In other words, the actions played by the two players,
$\sigma(1),\tau(1),\dots,\sigma(n),\tau(n)$ are thought of as independent 
random variables with probability distribions $\alpha_1,\beta_1,\dots, \alpha_n,\beta_n$.
Then, the following identity is a consequence of the independence:
\begin{align} %
r^{\alpha,\beta} = \int_{\purepolpair}
r^{\sigma,\tau} d\nu^{\alpha,\beta}(\sigma,\tau)
\qquad\text{and}\qquad 
P^{\alpha,\beta} =
\int_{\purepolpair} 
P^{\sigma,\tau}
d\nu^{\alpha,\beta}(\sigma,\tau)
\enspace ,
\label{e-integral}
\end{align}
which implies that the payment vector and transition matrix associated to a pair $(\alpha,\beta)$
of randomized policies is a convex combination of the ones associated to 
pairs of pure policies.
\todo{SG: added the following remark to incorporate an observation of BT in the polyhedral case
  (what it means in terms of extreme points)}
\begin{remark}
  By Milman's converse to the Krein-Milman theorem, the second equality in~\eqref{e-integral}
entails that the set of extreme points of the compact convex set of matrices $\{P^{\alpha,\beta}\mid
(\alpha,\beta)\in\randpolpair\}$ is included in the set of transition matrices
associated with pure policies, $\{P^{\sigma,\tau}\mid
(\sigma,\tau)\in\purepolpair\}$ .
\end{remark}

\end{remark}

\section{Proof of~\Cref{th-exists-unichain}}\label{sec-proof-th-exists-unichain}
  Let $\hat{T}(x)\coloneqq \lim_{s\to\infty} s^{-1}T(sx)$ denote the {\em recession function} of the shapley operator $T$. 
  We rely on Theorem 3.1 of \cite{ergodicity_conditions} (using $g=0$ in $(iii)$),
  which entails that the ergodic eigenproblem for $T$ is solvable iff
  $\hat{T}$ has only trivial fixed points, meaning
  that $\hat{T}(\eta)=\eta$ with $\eta\in \R^n$ implies
  that $\eta\in \R e$. We deduce from~\eqref{e-def-shapley-concurrent} that\todo{SG: I gave detailes and completed the proof}
  \[
    [  \hat{T}(\eta)]_i  =
        \min_{\alpha_i \in \Delta_{A(i)} } \max_{\beta_i \in \Delta_{B(i)} }
\sum_{j\in [n]} P_{ij}^{\alpha_i,\beta_i} \eta_j \enspace .
  \]
  Since the action spaces are compact, and the transition
  probabilities depend continuously on the actions,
  we have a selection property,
  which entails that, for all $\eta\in\R^n$,
  then there exists randomized policies $\alpha,\beta$ such that $\hat{T}(\eta)= P^{\alpha,\beta} \eta$. Hence, if $\hat{T}(\eta)=\eta$, $\eta=P^{\alpha,\beta}\eta$ is an harmonic vector of $P^{\alpha,\beta}$. Since $P^{\alpha,\beta}$ is unichain by assumption, the harmonic vector $\eta$ must be constant. Hence, by Theorem~3.1 of~\cite{ergodicity_conditions}, there exists $u\in \R^n$ and $\lambda\in \R$ such that $T(u)=\lambda e +u$. Then we select randomized policies
  which attain the minimum and the maximum in the expression of $T(u)$
  (see~\eqref{e-def-shapley-concurrent}). These policies are optimal
  in the stochastic mean-payoff game.

\section{Proof of~\Cref{th-terminates}}\label{sec-proof-RVI}
The proof of~\Cref{th-terminates} relies on two elementary lemmas.
We set $R(x)\coloneqq x-\mytop(x)$.
\begin{lemma}\label{lem-elt}
  We have $\|R(x)-R(y)\|_H = \|x-y\|_H$ for all $x,y\in\R^n$.
  Moreover, $\|(R\circ \shapleyapp)^\ell(x)-(R\circ T)^\ell(x)\|_H\leq 2\ell\eta$
for all $x\in \R^n$ and $\ell\geq 1$. 
\end{lemma}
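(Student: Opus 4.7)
\textbf{Proof plan for Lemma \ref{lem-elt}.}

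For the first identity, I will use the standard fact that Hilbert's seminorm vanishes on $\R e$, i.e.\ $\|v+\mu e\|_H = \|v\|_H$ for every $v\in\R^n$ and $\mu\in\R$. Writing
\[
R(x)-R(y) = (x-y) - (\mytop(x)-\mytop(y))\, e \enspace ,
\]
the scalar $\mu = \mytop(x)-\mytop(y)$ can be absorbed, so $\|R(x)-R(y)\|_H = \|x-y\|_H$.

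For the second inequality, I will proceed by induction on $\ell$. Two preliminary facts will be used throughout: (a) the Shapley operator $T$ is nonexpansive in $\|\cdot\|_H$, as noted in the preliminaries; (b) the seminorm is dominated by the sup-norm via $\|u\|_H \leq 2\|u\|_\infty$, so by the specification of the oracle, $\|\shapleyapp(z)-T(z)\|_H \leq 2\|\shapleyapp(z)-T(z)\|_\infty \leq 2\eta$ for every $z$.

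For the base case $\ell=1$, the first part of the lemma gives
\[
\|(R\circ\shapleyapp)(x) - (R\circ T)(x)\|_H = \|\shapleyapp(x)-T(x)\|_H \leq 2\eta \enspace .
\]
For the induction step, set $u_\ell = (R\circ\shapleyapp)^\ell(x)$ and $v_\ell = (R\circ T)^\ell(x)$. Using the first identity to strip the outer $R$, then the triangle inequality, fact (b), and the nonexpansiveness of $T$ in $\|\cdot\|_H$, I compute
\begin{align*}
\|u_{\ell+1}-v_{\ell+1}\|_H
&= \|\shapleyapp(u_\ell) - T(v_\ell)\|_H \\
&\leq \|\shapleyapp(u_\ell)-T(u_\ell)\|_H + \|T(u_\ell)-T(v_\ell)\|_H \\
&\leq 2\eta + \|u_\ell - v_\ell\|_H \enspace ,
\end{align*}
which combined with the inductive hypothesis $\|u_\ell-v_\ell\|_H \leq 2\ell\eta$ yields the bound $2(\ell+1)\eta$.

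No step is genuinely delicate here: the only subtlety is to remember to use the first part of the lemma to peel off the outer $R$ at each induction step (since $R$ itself is not nonexpansive in $\|\cdot\|_\infty$, going through Hilbert's seminorm is essential) and to convert the sup-norm oracle error bound into a Hilbert-seminorm bound via the factor $2$.
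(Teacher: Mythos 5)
Your proof is correct and follows essentially the same route as the paper's: induction on $\ell$, combining the factor-$2$ conversion $\|\shapleyapp(z)-T(z)\|_H\leq 2\|\shapleyapp(z)-T(z)\|_\infty\leq 2\eta$ with the nonexpansiveness of $T$ in Hilbert's seminorm; your use of the isometry property of $R$ to peel the outer normalization is just a trivially equivalent rearrangement of the paper's triangle-inequality decomposition through the hybrid term $(R\circ T)\circ(R\circ\shapleyapp)^{\ell}(x)$. Your explicit verification of the first identity (absorbing $(\mytop(x)-\mytop(y))e$ into the seminorm) fills in what the paper dismisses as obvious, and your side remark that $R$ fails to be nonexpansive in $\|\cdot\|_\infty$ is accurate.
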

\begin{proof}
  The first property is obvious. We prove the inequality
  $\|(R\circ \shapleyapp)^\ell(x)-(R\circ T)^\ell(x)\|_H\leq \ell\eta$
  by induction on $\ell$. For $\ell=1$, this follows from the nonexpansiveness
  of $R$ in Hilbert seminorm, and from $\|\shapleyapp(x)-T(x)\|_H\leq 2
  \|\shapleyapp(x)-T(x)\|_\infty \leq 2\eta$. 
  Suppose now that $\|(R\circ \shapleyapp)^\ell(x)-(R\circ T)^\ell(x)\|_H\leq 2\ell\eta$.
  Then, using the nonexpansiveness
  of $R$ and $T$, we get $\|(R\circ \shapleyapp)^{\ell+1}(x)-(R\circ T)^{\ell+1}(x)\|_H\leq
  \|(R\circ \shapleyapp)^{\ell+1}(x)-(R\circ T)\circ (R\circ \shapleyapp^{\ell}(x)\|_H
  + \|(R\circ T)\circ (R\circ \shapleyapp)^{\ell}(x) - (R\circ T)^{\ell+1}(x)\|_H
\leq 2\eta + 2\ell\eta=2(\ell+1)\eta$.
\end{proof}
\begin{lemma}\label{lem-contractx}
  suppose that $T: \R^n\to \R^n$ is nonexpansive in Hilbert's seminorm,
  and that $T^q$ is a $\gamma$-contraction.
  Let $x_{k}\coloneqq T^k(x_0)$ with $x_0\in \R^n$.
  Then, $\hilbert{x_{k+1}-x_k}\leq \gamma^{\lfloor k/q\rfloor} \hilbert{T(x_0)-x_0}$.
\end{lemma}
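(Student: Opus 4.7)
The idea is to decompose the index $k$ using Euclidean division by $q$ so that we can iterate the contractive power $T^q$ as many times as possible, and absorb the remaining part with nonexpansiveness. Write $k = qm + r$ with $0\le r< q$ and $m=\lfloor k/q\rfloor$. The plan is to exploit the semigroup identity $T^a\circ T^b = T^{a+b}$ to rewrite
\[
x_{k+1}-x_k \;=\; T^{qm+r+1}(x_0) - T^{qm+r}(x_0) \;=\; (T^q)^m\bigl(T^{r+1}(x_0)\bigr) - (T^q)^m\bigl(T^r(x_0)\bigr).
\]

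Next, I would apply the $\gamma$-contraction hypothesis on $T^q$ exactly $m$ times: since $T^q$ is $\gamma$-Lipschitz for $\|\cdot\|_H$, its $m$-th iterate $(T^q)^m$ is $\gamma^m$-Lipschitz, hence
\[
\hilbert{x_{k+1}-x_k} \;\le\; \gamma^m\,\hilbert{T^{r+1}(x_0) - T^r(x_0)}.
\]
Finally, using the assumed nonexpansiveness of $T$ (and therefore of $T^r$) in Hilbert's seminorm, we bound $\hilbert{T^r(T(x_0)) - T^r(x_0)}\le \hilbert{T(x_0) - x_0}$, which combines with the previous estimate to give
\[
\hilbert{x_{k+1}-x_k} \;\le\; \gamma^{\lfloor k/q\rfloor}\,\hilbert{T(x_0) - x_0},
\]
as required.

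There is essentially no obstacle here: the only subtlety is to be careful that the Hilbert seminorm is used throughout (not the sup-norm), which is fine since both the nonexpansiveness hypothesis and the contraction hypothesis are stated with respect to $\|\cdot\|_H$. The decomposition $k=qm+r$ is exactly the right bookkeeping to convert the $q$-step contraction rate into a per-step decay of the successive differences.
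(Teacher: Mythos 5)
Your proposal is correct and takes essentially the same approach as the paper: its one-line proof likewise performs the Euclidean decomposition of the index and chains $\hilbert{T^{kq+r+1}(x_0)-T^{kq+r}(x_0)}\leq \gamma^{k}\hilbert{T^{r+1}(x_0)-T^{r}(x_0)}\leq \hilbert{T(x_0)-x_0}$, using the $\gamma$-contraction of $T^q$ and the nonexpansiveness of $T$ in $\hilbert{\cdot}$ exactly as you do.
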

\begin{proof}
  For all $k\geq 0$ and $r\geq 0$, we have $\hilbert{T^{kq+r+1}(x_0)-T^{qk+r}(x_0)}
    \leq \gamma^k \hilbert{T^{r+1}(x_0)-T^r(x_0)}\leq \hilbert{T(x_0)-x_0}$.
  \end{proof}
\begin{proof}[Proof of~\Cref{th-terminates}]
  Suppose that the algorithm terminates. We have
  \begin{align}
    T(x) \leq \eta e + \shapleyapp(x)  \leq x + (\eta +\beta) e \enspace, \label{e-ub}
    \end{align}
  and similarly, $T(x) \geq x + (\alpha -\eta) e$. Since
  $\eta\leq \epsilon/3$, using~\Cref{lemme_encadrement}, this provides the announced
  bound on the value of all initial states. Moreover, since
  $\beta-\alpha = \|x-\shapleyapp(x)\|_H\leq \epsilon/3$,
the interval
$[\alpha-\epsilon/3, \beta+\epsilon/3]$ is of width
$\beta-\alpha+2\epsilon/3\leq \epsilon$. This shows
the correctness of the algorithm.

Observe that $(R\circ T)^q(x)$ differs from $T^q(x)$
only by an additive constant. It follows that $(R\circ T)^q$
is also a $\gamma$-contraction in Hilbert seminorm.
By~\Cref{lem-contractx}, 
this entails that the sequence $x_k\coloneqq (R\circ T)^k(0)$
satisfies
\begin{align}
  \|x_{k+1}-x_k\|_H \leq \gamma^{\lfloor k/q\rfloor}\|T(0)\|_H \enspace .\label{e-contract}
\end{align}
Let us compare this sequence $x_k$
with the sequence $\xapprox_0=0$, $\xapprox_{k+1}= R\circ \shapleyapp(\xapprox_k)$
constructed by~\Cref{RVI},
using the inexact oracle $\shapleyapp$. We first prove by induction
  on $k\geq 0$ that
  \begin{align}
    \|\xapprox_{qk}-x_{qk}\|_H \leq 2q\eta/(1-\gamma)
    \label{e-bound}
    \end{align}
  For $k=0$, the property is trivial. If $\|\xapprox_{qk}-x_{qk}\|_H \leq 2q\eta/(1-\gamma)$, then, using~\Cref{lem-elt} with $\ell=q$, we get
  $\|\xapprox_{qk+q}-x_{qk+q}\|_H =
  \|(R\circ \shapleyapp)^q(\xapprox_{qk})
  - (R\circ {T})^q({x}_{qk})
  \|_H
  \leq 
  \|(R\circ \shapleyapp)^q(\xapprox_{qk})
  - (R\circ {T})^q(\xapprox_{qk})
  \|_H
  + 
  \|(R\circ T)^q(\xapprox_{qk})
  - (R\circ T)^q({x}_{qk})
  \|_H
  \leq
  2\eta q + 
  \gamma 2\eta q/(1-\gamma)
  = 2\eta q/(1-\gamma)
  $,
  because $(R\circ T)^q$
is a $\gamma$-contraction in Hilbert seminorm,
  which shows~\eqref{e-bound}.

  Moreover, using the nonexpansiveness of $T$, and using~\eqref{e-bound},
  \begin{align*}
    \hilbert{\xapprox_{qk}-\shapleyapp(\xapprox_{qk})}
  &\leq \hilbert{\xapprox_{qk}-x_{qk}}
    +\hilbert{x_{qk}-T(x_{qk})}
    \\
    &  
    +\hilbert{T(x_{qk})-T(\xapprox_{qk})}
    +\hilbert{T(\xapprox_{qk})-\tilde T(\xapprox_{qk})}
  \\
  &\leq \frac{2\eta q}{1-\gamma}
  + \gamma^k\hilbert{T(0)}  
  +   \frac{2\eta q}{1-\gamma}
  + 2\eta \enspace \\
  & = \eta (2 + \frac{4 q}{1-\gamma})+ \gamma^k \hilbert{T(0)} \enspace .
  \end{align*}
Hence, \Cref{RVI} terminates as soon as the condition
$\eta (2 + \frac{4 q}{1-\gamma})+ \gamma^k \|T(0)\|_H \leq \epsilon/3$
is fulfilled.
If the assumption of the theorem, $\eta(12 + \frac{24 q}{1-\gamma})\leq \epsilon$, holds, then the previous condition is fulfilled when $\epsilon/6 + \gamma^k \|T(0)\|_H \leq \epsilon/3$, i.e., when
$k\geq (\log\|T(0)\|_H+ \log 6 +|\log\epsilon| )/|\log \gamma|$.
  \end{proof}

\section{Proof of \Cref{th-stott}}
\label{appendix-proofofcvg} 
    Suppose that $T(u) = \lambda e + u$. Then $\shapleytheta(u) = (1-\theta) {\lambda}e + u$.
    Moreover, $\shapleytheta$ commutes with the additive action of constants, and is non-expansive for $\hilbert{\cdot}$. Therefore, $\shapleytheta$ induces a quotient operator $\Tilde{\shapleytheta}: \R^n/\R e \rightarrow \R^n/\R e$. Also, $(\R^n/\R e, \norm{\cdot}_{\mathrm{H}})$ inherits a finite-dimensional normed vector space structure, on which $\Tilde{\shapleytheta}$ is non-expansive.

    Let $\bar{u} = \{u + \alpha e, \alpha \in \R \}$ be the equivalence class of $u$. Then $\bar{u}$ is fixed point of the quotient operator.
    Let us note $\{x_k, k \in \NN \}$ the sequence produced by applying the \Cref{RVI} to $\shapleytheta$. According to the Ishikawa theorem, $\{\bar{x}_k, k \in \NN \}$ converges in $\R^n/\R e$.
    As $\mathbf{t}(x_k)=0$, this implies that $x^k$ converges in $\R^n$.
    
    Conversely, suppose that the sequence $\{x_k, k \in \NN \}$ converges in $\R^n$, let us note $u$ its limit. Then $\norm{T(u) - u}_{\mathrm{H}} = 0$, so $T(u) - u \in \R e$, which implies that there exists $\lambda\in\R$ such that $T(u) = \lambda e + u$.\hfill\qed
\section{Proof of~\Cref{thm_unichain} and~\Cref{prop-irr}}
\label{sec-proof-unichain}
\begin{proof}[Proof of~\Cref{thm_unichain}]
    Consider a sequence of pairs of pure policies $\sigma_1, \tau_1, \ldots , \sigma_k, \tau_k,\dots$.
    For all $i\in [n]$, define, for simplicity of notation, $S_{i}(k)\coloneqq
    S_i(\sigma_1, \tau_1, \ldots , \sigma_k, \tau_k)=  \{j\mid  [Q^{ \sigma_1, \tau_1} \dots Q^{ \sigma_k, \tau_k}]_{i j}>0\}$. 
    We have $S_i(k)\subset S_i(k+1)$
    because the diagonal coefficients of all the matrices $Q^{\sigma,\tau}$
    are strictly positive.

    Let us know consider two distinct elements $i_1,i_2$ of $[n]$.
    Assume by contradiction that $S_{i_1}(n) \cap S_{i_2}(n) = \varnothing$. Then for all $k < n$, $S_{i_1}(k) \cap S_{i_2}(k) = \varnothing$.
    Let us consider the sequence of subsets of $[n]$ consisting of $S_{i_1}(1) \cup S_{i_2}(1)$, $S_{i_1}(2) \cup S_{i_2}(2), \cdots$, and $S_{i_1}(n) \cup S_{i_2}(n)$. Since this is a nonincreasing sequence, and since the cardinality of $S_{i_1}(1) \cup S_{i_2}(1)$ is greater than $2$, there is an integer $m < n$ such that: $S_{i_1}(m-1) = S_{i_1}(m)$ and $S_{i_2}(m-1) = S_{i_2}(m)$.

    Hence $S_{i_1}(m-1)$ and $S_{i_2}(m-1)$ are closed under the action of the matrix
    $Q^{\sigma_m,\tau_m}$. Since this matrix is unichain, every set
    that is closed under its action must contain the final
    class of the matrix. It follows that $S_{i_1}(m-1)\cap S_{i_2}(m-1)$
    contains this final class, contradicting $S_{i_1}(m-1)\cap S_{i_2}(m-1)=\varnothing$.

    Let $k \leq n$ denote the minimal integer $k$ such that
    for all $i_1,i_2$, $S_{i_1}(k)\cap S_{i_2}(k)\neq \varnothing$,
    and let $Q\coloneqq  Q^{ \sigma_1, \tau_1} \dots Q^{ \sigma_k, \tau_k}$. 
Then, $\underset{i<j}{\min} \sum_l \min(Q_{i,l}, Q_{j,l})
    \geq \underset{i<j}{\min} \sum_{l\in F} \min(Q_{i,l}, Q_{j,l})> 0$, so $\underset{i<j}{\min} \sum_l \min(Q_{i,l}, Q_{j,l}) \geq  \theta^k$, and, using~\Cref{th-dob},
    \begin{equation*}
        \norm{Q^{ \sigma_1, \tau_1} \dots  Q^{ \sigma_k, \tau_k}}_{\mathrm{H}} = 1 - \underset{i<j}{\min} \sum_l \min(Q_{i,l}, Q_{j,l}) \leq 1 - \theta^k \enspace .
    \end{equation*}
\end{proof}
\begin{proof}[Proof of~\Cref{prop-irr}]
  The only set invariant under the action of an irreducible $n\times n$ stochastic matrix is $[n]$. Hence, the proof of~\Cref{thm_unichain} implies in this case that there is an index $k\leq n$
  such that the matrix $Q^{ \sigma_1, \tau_1} \dots Q^{ \sigma_k, \tau_k}$ is positive. Since all the entries 
  of this matrix are bounded from below by $\theta^k$, applying~\Cref{th-dob}, we deduce~\Cref{prop-irr}.
\end{proof}

\begin{remark}%
  The proof of \Cref{thm_unichain} builds on the idea of the proof of $(4)\Rightarrow (S)$ of Theorem 4 of \cite{FEDERGRUEN1978711}, correcting a gap there. In fact, the authors
  of~\cite{FEDERGRUEN1978711} consider, more generally, two different sequences of policies. Denote as above, $S_{i_1}(k)$ the set of states accessible from $i_{1}$ in at most $k$ steps, when applying the first sequence
  of policy, and $W_{i_2}(k)$ the analogous set, starting from $i_2$, and applying the second sequence of policies. It is shown in~\cite{FEDERGRUEN1978711} that
  there is an integer $k$ such that $S_{i_1}(k)$ and $W_{i_2}(k)$ are {\em closed}
  sets, where closed means what we call invariant under the action of a stochastic
  matrix. However, they are closed sets under the action of {\em different}
  stochastic matrices, and then we are unable to conclude that $S_{i_1}(k)\cap W_{i_2}(k)\neq\varnothing$ as claimed at the last line of proof of Theorem~4 of~\cite{FEDERGRUEN1978711}.
  However, we will use here
  a new approach to compute the contraction rate of the Shapley operator, exploiting
  properties of variational analysis. To apply our approach, we need only to consider
  one sequence instead of two, comparing $S_{i_1}(k)$ and $S_{i_2}(k)$, and
  in this special case, the above argument is valid.
\end{remark}

\begin{remark}
  Tsitsiklis showed in~\cite{tsitsiklis}
  that in the one player case,
  checking whether all policies are unichain is co-NP-complete.
\end{remark}
\section{Proof of \Cref{contraction_concu}}\label{proof-km-concur}
Recall that the (one-sided) {\em directional derivative} of a map $T:\R^n \to \R^p$,
at a point $x\in \R^n$, is the map $T'_x: \R^n\to \R^p$, defined by
\begin{align}
T'_x(h)\coloneqq \lim_{s\to 0^+}s^{-1}(T(x+sh)-T(x)) \enspace .\label{e-unidir}
\end{align}
The following is an extension of a classical result of Mills~\cite{mills}, which
follows from~\cite{RS01}.
\begin{proposition}[See~{\cite[Prop.~4]{RS01}}]\label{selection_derive}
    The Shapley operator of a concurrent game does admit a directional derivative,
    given by:
    \[ T'_x(h) = \sup_{\alpha \in \Optmax(v)} \inf_{\beta \in \Optmin(v)}
       P^{\alpha,\beta}h,\qquad \forall h \in \mathbb{R}^{n}, \forall x\in \R^n \enspace.
       \]
       Moreover, for all $x,h$, there exists randomized policies $\alpha$ and $\beta$
such that $T'_x(h)= P^{\alpha,\beta}h$. 
\end{proposition}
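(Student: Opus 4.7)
The plan is to reduce the claim to a Mills-type envelope formula applied coordinate by coordinate, and then to a measurable/continuous selection argument for the existence statement. Fix $x\in\R^n$ and a direction $h\in\R^n$. For each state $i\in[n]$, the $i$-th component of the Shapley operator is the value of the zero-sum compact game on $\Delta_{A(i)}\times\Delta_{B(i)}$ with payoff
\[
\phi_i(\alpha_i,\beta_i;y)\;=\;r_i^{\alpha_i,\beta_i}+\sum_{j\in[n]}P_{ij}^{\alpha_i,\beta_i}y_j,
\]
which is jointly continuous in $(\alpha_i,\beta_i)$, bilinear, and affine (in fact linear up to constants) in $y$. Since $\phi_i(\alpha_i,\beta_i;x+sh)=\phi_i(\alpha_i,\beta_i;x)+s\,\psi_i(\alpha_i,\beta_i)$ with $\psi_i(\alpha_i,\beta_i)\coloneqq\sum_jP_{ij}^{\alpha_i,\beta_i}h_j=(P^{\alpha,\beta}h)_i$, each quantity $s\mapsto T_i(x+sh)$ is the value of a family of matrix games with a common feasible set and a linearly perturbed payoff.

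Next I would invoke the classical Mills formula for the directional derivative of the value of a zero-sum game under bilinear perturbation, in the form
\[
T_i'(x;h)\;=\;\inf_{\alpha_i\in\Optmin_i(x)}\sup_{\beta_i\in\Optmax_i(x)}\psi_i(\alpha_i,\beta_i)
\;=\;\sup_{\beta_i\in\Optmax_i(x)}\inf_{\alpha_i\in\Optmin_i(x)}\psi_i(\alpha_i,\beta_i),
\]
where $\Optmin_i(x),\Optmax_i(x)$ denote the optimal mixed-strategy sets for the game in state $i$ at the point $x$. The min-max and max-min coincide because $\psi_i$ is bilinear and the optimal strategy sets are convex and compact (they are the pre-images of the extremum in a continuous concave/convex problem on a compact set), so Sion's minimax theorem applies to $\psi_i$ restricted to $\Optmin_i(x)\times\Optmax_i(x)$. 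Assembling these identities componentwise yields the announced formula (modulo the harmless typographical switch between $\alpha$/$\beta$ and Min/Max in the statement), since the global sets $\Optmin(x)$ and $\Optmax(x)$ factor as products over $i\in[n]$ of the local ones, owing to the absence of coupling between states in the Shapley operator.

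For the last assertion, I would carry out a selection argument: the maps $x\mapsto\Optmin_i(x)$ and $x\mapsto\Optmax_i(x)$ take values in nonempty compact convex subsets of the simplices; for fixed $x,h$, the bilinear map $\psi_i$ is continuous and the compact sets on which one inf-sups are nonempty, so a saddle point $(\alpha_i^\ast,\beta_i^\ast)\in\Optmin_i(x)\times\Optmax_i(x)$ exists, i.e.\ $T_i'(x;h)=\psi_i(\alpha_i^\ast,\beta_i^\ast)=(P^{\alpha^\ast,\beta^\ast}h)_i$. Gluing together these coordinate-wise selections produces policies $(\alpha^\ast,\beta^\ast)\in\randpolpair$ with $T'_x(h)=P^{\alpha^\ast,\beta^\ast}h$, as desired.

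The main obstacle I anticipate is a clean justification of the Mills envelope identity in the compact (rather than finite) setting: one must check that the directional derivative defined by the one-sided limit~\eqref{e-unidir} indeed equals the common value of the inf-sup and the sup-inf over optimal strategy sets. The easiest route is probably to bound the incremental quotient from above using any $\alpha_i$-optimal strategy at $x$ (which yields an inequality against $\sup_{\beta_i\in\Optmax_i(x)}\psi_i$), bound it from below by a symmetric argument using $\beta_i$-optimal strategies, and then appeal to Sion's theorem to collapse both bounds. The argument is standard and available in~\cite{RS01}, so the step is well-understood but not trivial to redo from scratch.
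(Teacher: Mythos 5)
Your overall route is the right one, and in fact it reconstructs the argument that the paper does not spell out: the paper gives no proof of this proposition, delegating it entirely to \cite[Prop.~4]{RS01}. Your ingredients — the coordinate-wise reduction (the operator decouples across states, so $\Optmin(x)$ and $\Optmax(x)$ factor as products of the local optimal sets), the Mills/Danskin envelope identity restricted to optimal-strategy sets, Sion's theorem on the compact convex product $\Optmin_i(x)\times\Optmax_i(x)$ to collapse inf-sup and sup-inf, and a saddle-point selection for the last assertion — are exactly the standard ones. One bookkeeping point: for compact (not finite) action spaces, $\Delta_{A(i)}$ and $\Delta_{B(i)}$ are spaces of probability measures, so compactness, convexity of the optimal sets, and continuity of $\psi_i$ must be taken in the weak-$*$ topology (joint weak-$*$ continuity of the bilinear integral functional follows from continuity of $(a,b)\mapsto r_i^{a,b}$ and $(a,b)\mapsto P_{ij}^{a,b}$ on the compact set $A(i)\times B(i)$); Sion applies in this topological vector space setting, so this is a refinement, not a gap. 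You are also right that the statement's $\sup_{\alpha\in\Optmax}\inf_{\beta\in\Optmin}$ (and the stray $v$ in place of $x$) is a typographical slip in the paper.

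The one step where your sketch, as literally written, would fail is the upper bound: fixing $\alpha_i\in\Optmin_i(x)$ and differentiating $s\mapsto\max_{\beta_i}\phi_i(\alpha_i,\beta_i;x+sh)$ does \emph{not} directly yield an inequality against $\sup_{\beta_i\in\Optmax_i(x)}\psi_i(\alpha_i,\beta_i)$. The Danskin derivative of that max-function is the maximum of $\psi_i(\alpha_i,\cdot)$ over the set of \emph{best responses} to $\alpha_i$, which contains $\Optmax_i(x)$ but may be strictly larger. The standard repair is to test the perturbed value against the fixed optimal $\alpha_i$ paired with a strategy $\beta_i^s$ optimal in the \emph{perturbed} game: then $T_i(x+sh)\le\phi_i(\alpha_i,\beta_i^s;x)+s\,\psi_i(\alpha_i,\beta_i^s)\le T_i(x)+s\,\psi_i(\alpha_i,\beta_i^s)$, and one concludes via weak-$*$ compactness together with upper semicontinuity of the optimal-strategy correspondence (every cluster point of $\beta_i^s$ as $s\downarrow 0$ lies in $\Optmax_i(x)$, since the value depends continuously on $s$). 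The symmetric lower bound and Sion's theorem then give existence of the one-sided limit and the formula simultaneously. Since you explicitly flagged this as the anticipated obstacle and deferred it to \cite{RS01}, I would assess your proposal as structurally correct and matching the intended proof, with this single step needing the above repair to be rigorous.
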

However, we will need a stronger notion than the mere existence of directional derivative.
A map $T$ is said to be {\em semidifferentiable} at point $x$ if the limit in~\eqref{e-unidir} is {\em uniform} in $h$, when $h$ ranges over the unit sphere. Equivalently, one requires
the existence of a first-order ``Taylor-like'' extension $T(x+h)=T(x)+G_x(h) + o(\|h\|)$
where $G_x$ is a map that is continuous and positively homogeneous of degree $1$. Then,
$G_x=T'_x(h)$. In other words, semidifferentiability is akin to differentiability,
but the semidifferential map $T'_x$ is allowed to be non-linear. We refer
the reader to~\cite{Roc94} for background on this notion.
If $T$ is Lipschitz continuous, and if $T$ admits directional derivatives
in all directions at a point $x$, then, it is semidifferentiable at this point,
see e.g.~\cite[Lemma~3.2]{agn12}. It follows from~\Cref{selection_derive} 
that the Shapley operator of a concurrent game is semidifferentiable at every point. There is also
a chain rule for semidifferentiable maps. If $F: \R^n\to \R^p$ it semidifferentiable
at a point $x\in \R^n$, and $G:\R^p\to \R^q$ is semidifferentiable at point $F(x)$,
then, $G\circ F$ is semidifferentiable at point $x$, with
\begin{align}
  (G\circ F)'_x (h) = G'_{F(x)}\circ F'_x (h) \enspace .
  \label{e-chain}
\end{align}
see e.g.~\cite[Lemma~3.4]{agn12}.

Let us now denote, for $x,y \in \mathbb{R}^n$, 
\[ f : t \rightarrow T^k(x+t(y-x))
\]
By Rademacher's theorem\todo{SG: big gun, more elementary argument?},
a Lipschitz continuous map $\R\to \R^n$ is almost everywhere
differentiable, and it is absolutely continuous. It follows that
\begin{align}
  T^k(y) - T^k(x) = \int_0^{1} f'(s)ds\enspace ,\label{intscalar}
\end{align}
where $f'(s)$ denote the derivative of $f$ at point $s$, almost everywhere defined.
When it exists, this derivative coincides with $f'_s(1)$, the semidifferential of the map $f$
at point $s$, in the direction $1\in \R$. Then, we arrive at the following result.
\begin{lemma}\label{lem-selec}
  For all $s\in (0,1)$, there exists randomized policies
  $(\alpha_s^1,\beta_s^1),\dots ,(\alpha_s^k,\beta_s^k) \in \randpolpair$ such that:
\begin{equation} \label{selection_f_k_eq}
    f'(s) = P^{\alpha_s^1,\beta_s^1} \dots P^{\alpha_s^k,\beta_s^k} (y-x) \enspace .
\end{equation}
\end{lemma}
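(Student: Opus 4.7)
The plan is to combine the chain rule for semidifferentiable maps, recalled in \eqref{e-chain}, with the selection property stated in the last sentence of~\Cref{selection_derive}.

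First, I would write $f = T^k \circ g$, where $g : \R \to \R^n$ is the affine map $g(t) \coloneqq x + t(y-x)$. The map $g$ is differentiable, hence semidifferentiable, everywhere, with $g'_s(1) = y - x$. The Shapley operator $T$ is nonexpansive in the sup-norm, hence Lipschitz, and admits directional derivatives in every direction at every point by~\Cref{selection_derive}. Invoking~\cite[Lemma~3.2]{agn12}, $T$ is therefore semidifferentiable at every point. A straightforward induction using the chain rule~\eqref{e-chain} then yields, for every $s \in (0,1)$,
\begin{equation}\label{eq-chain-expanded}
  f'_s(1) \;=\; T'_{x^s_{k-1}} \circ T'_{x^s_{k-2}} \circ \cdots \circ T'_{x^s_0}(y - x),
\end{equation}
where I set $x^s_0 \coloneqq x + s(y-x)$ and $x^s_j \coloneqq T^j(x^s_0)$ for $j = 1,\dots,k-1$. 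Since $f$ is Lipschitz, the classical derivative $f'(s)$ exists almost everywhere, and wherever it exists it coincides with the one-sided derivative $f'_s(1)$ given by the right-hand side of~\eqref{eq-chain-expanded}.

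Next, I would iterate the selection property. By the final assertion of~\Cref{selection_derive}, for any $z, h \in \R^n$ there exists a pair of randomized policies $(\alpha,\beta) \in \randpolpair$ such that $T'_z(h) = P^{\alpha,\beta} h$. Apply this at $z = x^s_0$ and $h_0 \coloneqq y - x$ to obtain $(\alpha^k_s, \beta^k_s)$ with $T'_{x^s_0}(h_0) = P^{\alpha^k_s, \beta^k_s} h_0$, and set $h_1 \coloneqq P^{\alpha^k_s, \beta^k_s} h_0$. Apply the selection at $z = x^s_1$ with direction $h_1$ to get $(\alpha^{k-1}_s, \beta^{k-1}_s)$, and continue. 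After $k$ successive selections we have produced the pairs $(\alpha^1_s,\beta^1_s),\dots,(\alpha^k_s,\beta^k_s)$, and substitution into~\eqref{eq-chain-expanded} yields the matrix product announced in~\eqref{selection_f_k_eq}.

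The only subtlety is bookkeeping: the direction $h_j$ at step $j$ depends on the policies already chosen at the previous steps, so the selection must be performed sequentially rather than simultaneously, and the indexing convention (with $(\alpha^k_s,\beta^k_s)$ corresponding to the rightmost factor, i.e.\ to the first application $T'_{x^s_0}$) must be fixed to match the order in the statement. No other obstacle is expected, all the analytic content having been packaged into \Cref{selection_derive} and the chain rule~\eqref{e-chain}.
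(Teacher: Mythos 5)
Your proposal is correct and takes essentially the same route as the paper's proof: an application of the chain rule~\eqref{e-chain} to expand $f'(s)$ as $T'_{T^{k-1}(x+s(y-x))}\circ\cdots\circ T'_{x+s(y-x)}(y-x)$, followed by the selection property in~\Cref{selection_derive}. The paper states this in two lines; your version merely makes explicit the induction, the sequential (direction-dependent) choice of the policy pairs, and the indexing convention matching the rightmost factor to the first application of the semidifferential.
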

\begin{proof}
 Using the chain rule~\eqref{e-chain}, we get that for all $s\in (0,1)$,
\[ f'(s) = T'_{T^{k-1}(x+s(y-x))} \circ \cdots \circ  T'_{x+s(y-x)} (y-x)
\]
Then, the conclusion follows from the selection result in~\Cref{selection_derive}.
\end{proof}

Let us consider the following quantity
\[D_k \coloneqq \max_{(\alpha_1,\beta_1),\dots  ,(\alpha_k,\beta_k) \in \randpolpair}
\norm{P^{\alpha_1\beta_1}\dots P^{\alpha_k,\beta_k}}_{\mathrm{H}} \enspace .\]
Observe that the maximum is taken over a Cartesian product of compact sets (of randomized strategies),
and that the expression which is maximized is continuous in
$(\alpha_1,\beta_1),\dots  ,(\alpha_k,\beta_k)$. Hence,
the maximum is achieved.\todo{SG: not a polytope anymore}

\begin{lemma}\label{D_k} 
For all $x,y \in \RR^n$ and $k \in \NN$
\[ \norm{T^k(x) - T^k(y)}_{\mathrm{H}} 
\leq D_k \norm{y-x}_{\mathrm{H}}
\]
\end{lemma}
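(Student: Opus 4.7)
\medskip

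My plan is to start from the integral representation~\eqref{intscalar}, apply Hilbert's seminorm, pass it inside the integral using the triangle (integral) inequality, and bound the integrand pointwise via the selection statement of~\Cref{lem-selec} together with the definition of $D_k$.

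More precisely, the map $f: [0,1]\to \R^n$, $t\mapsto T^k(x+t(y-x))$, is Lipschitz, so by Rademacher's theorem it is differentiable almost everywhere and absolutely continuous, which justifies the identity
\[
T^k(y)-T^k(x) = \int_0^1 f'(s)\, ds \enspace .
\]
Applying Hilbert's seminorm and using the fact that $\normH{\cdot}$ is a seminorm (in particular, the standard integral form of the triangle inequality holds), we get
\[
\normH{T^k(y)-T^k(x)} \leq \int_0^1 \normH{f'(s)}\, ds \enspace .
\]
By~\Cref{lem-selec}, for almost every $s\in(0,1)$ there exist randomized policies $(\alpha_s^1,\beta_s^1),\dots,(\alpha_s^k,\beta_s^k)\in \randpolpair$ such that $f'(s) = P^{\alpha_s^1,\beta_s^1}\cdots P^{\alpha_s^k,\beta_s^k}(y-x)$. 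Hence, using the definition of the operator seminorm on the quotient space,
\[
\normH{f'(s)} = \normH{P^{\alpha_s^1,\beta_s^1}\cdots P^{\alpha_s^k,\beta_s^k}(y-x)} \leq \normH{P^{\alpha_s^1,\beta_s^1}\cdots P^{\alpha_s^k,\beta_s^k}}\,\normH{y-x} \leq D_k\,\normH{y-x} \enspace .
\]
Integrating this pointwise bound over $s\in[0,1]$ yields the announced inequality.

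The only mildly delicate point is ensuring that the pointwise bound on $\normH{f'(s)}$ can legitimately be integrated. This is not an issue here: the bound $\normH{f'(s)}\leq D_k\normH{y-x}$ holds at every point of differentiability of $f$, i.e.\ almost everywhere, with a constant right-hand side independent of $s$, so no measurability of the selection $s\mapsto (\alpha_s^i,\beta_s^i)$ is needed. The uniform bound by the constant $D_k\normH{y-x}$ then passes through the integral trivially.
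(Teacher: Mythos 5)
Your proof is correct and takes essentially the same route as the paper's: the paper likewise combines the integral representation~\eqref{intscalar} with the selection statement of~\Cref{lem-selec} to bound $\normH{f'(s)}$ pointwise by $D_k\normH{y-x}$ and then integrates. Your closing remark—that the constant, $s$-independent right-hand side makes measurability of the policy selection $s\mapsto(\alpha_s^i,\beta_s^i)$ a non-issue—is a sound clarification of a point the paper leaves implicit, not a departure from its argument.
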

\begin{proof}
We deduce from~\Cref{lem-selec} and~\eqref{intscalar} that 
  \[
  \norm{T^k(x) - T^k(y)}_{\mathrm{H}}
  \leq \int_0^1 
\norm{f'(s)}_{\mathrm{H}}ds  \leq D_k \norm{y-x}_{\mathrm{H}} \enspace .
\]
  \end{proof}

The following lemma shows that the maximum in the expression of $D_k$ is actually achieved
by sequences of {\em pure} policies.\todo{SG: rewrote the proof in a shorter more abstract way}
\begin{lemma}\label{lemme_barycentre}
  Let $((\alpha_1,\beta_1),\dots,\dots , (\alpha_k,\beta_k) )\in \randpolpair^k$. Then,
\begin{align}\label{e-randtopure}
\norm{P^{\alpha_1,\beta_1}\dots P^{\alpha_k,\beta_k}}_{\mathrm{H}} \leq
\max_{(\sigma_1,\tau_1),\dots, (\sigma_k,\tau_k) \in \purepolpair^k}
\norm{P^{(\sigma_1,\tau_1)}\dots P^{(\sigma_k,\tau_k)}}_{\mathrm{H}}  \enspace .
\end{align}
\end{lemma}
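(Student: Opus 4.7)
The plan is to express the product $P^{\alpha_1,\beta_1}\cdots P^{\alpha_k,\beta_k}$ as a barycenter of products of pure-policy transition matrices, and then invoke convexity of the Hilbert operator seminorm. The ingredients are already assembled in the paper: the integral representation~\eqref{e-integral} says that each factor $P^{\alpha_i,\beta_i}$ is itself a barycenter of the matrices $\{P^{\sigma_i,\tau_i}\}_{(\sigma_i,\tau_i)\in\purepolpair}$ with respect to the product measure $\nu^{\alpha_i,\beta_i}$ on $\purepolpair$.

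First, I would apply~\eqref{e-integral} to each of the $k$ factors. Since matrix multiplication is bilinear and bounded (all entries live in $[0,1]$), and since $\purepolpair$ is a compact metric space with $(a,b)\mapsto P_{ij}^{a,b}$ continuous, Fubini's theorem yields
\[
P^{\alpha_1,\beta_1}\cdots P^{\alpha_k,\beta_k}
= \int_{\purepolpair^k} P^{\sigma_1,\tau_1}\cdots P^{\sigma_k,\tau_k}\,
d\mu(\sigma_1,\tau_1,\dots,\sigma_k,\tau_k),
\]
where $\mu\coloneqq \nu^{\alpha_1,\beta_1}\otimes\cdots\otimes\nu^{\alpha_k,\beta_k}$ is a probability measure on $\purepolpair^k$. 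Thus the left-hand matrix is an honest barycenter of pure-policy products.

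Next, I would exploit the fact that $\hilbert{\cdot}$, being the operator norm of a linear map between the finite-dimensional quotient spaces $(\R^n/\R e, \hilbert{\cdot})$, is a genuine seminorm on matrices, hence convex and continuous. Jensen's inequality for vector-valued integrals (applied in the Banach space of linear operators on $\R^n/\R e$) therefore gives
\[
\bigl\|P^{\alpha_1,\beta_1}\cdots P^{\alpha_k,\beta_k}\bigr\|_{\mathrm{H}}
\;\leq\; \int_{\purepolpair^k}
\bigl\|P^{\sigma_1,\tau_1}\cdots P^{\sigma_k,\tau_k}\bigr\|_{\mathrm{H}}\,d\mu
\;\leq\; \sup_{(\sigma_i,\tau_i)_{i=1}^k\in\purepolpair^k}
\bigl\|P^{\sigma_1,\tau_1}\cdots P^{\sigma_k,\tau_k}\bigr\|_{\mathrm{H}}.
\]
Finally, since $\purepolpair = \prod_{i\in[n]}A(i)\times\prod_{i\in[n]}B(i)$ is compact and the map $(\sigma_1,\tau_1,\dots,\sigma_k,\tau_k)\mapsto \hilbert{P^{\sigma_1,\tau_1}\cdots P^{\sigma_k,\tau_k}}$ is continuous, the supremum is attained, giving the maximum in~\eqref{e-randtopure}.

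There is no real obstacle here: the statement is essentially a tautology once one recognizes the integral representation and applies convexity of the norm. The only minor point to be careful about is Fubini's applicability, which follows immediately from compactness of $\purepolpair$ and continuity (hence boundedness) of $(a,b)\mapsto P_{ij}^{a,b}$.
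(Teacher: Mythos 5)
Your proof is correct and follows essentially the same route as the paper: expressing $P^{\alpha_1,\beta_1}\cdots P^{\alpha_k,\beta_k}$ via the integral representation~\eqref{e-integral} as a barycenter over $\purepolpair^k$ with respect to $\nu^{\alpha_1,\beta_1}\otimes\cdots\otimes\nu^{\alpha_k,\beta_k}$, then applying convexity of the operator seminorm $\hilbert{\cdot}$ and a compactness argument to replace the supremum by a maximum. Your write-up merely makes explicit the details (Fubini, Jensen's inequality for vector-valued integrals) that the paper leaves implicit.
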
                     %
\begin{proof}
  Using the integral representation~\eqref{e-integral}, we get
  \begin{align*}
    P^{\alpha_1,\beta_1}\dots P^{\alpha_k,\beta_k}=\int_{\purepolpair^k}
    P^{\sigma_1,\tau_1}\dots P^{\sigma_k,\tau_k}
    d\nu^{\alpha_1,\beta_1}(\sigma_1,\tau_1)
    \dots d\nu^{\alpha_k,\beta_k}(\sigma_k,\tau_k) \enspace .
  \end{align*}
  Since $\nu^{\alpha_1,\beta_1}\otimes \dots\otimes \nu^{\alpha_k,\beta_k}$ is a probability measure
  on $\purepolpair^k$, 
  using the convexity of the operator norm $\|\cdot\|_H$, we get~\eqref{e-randtopure}.
  Moreover, a compactness argument shows that the maximum in~\eqref{e-randtopure} is achieved.

\end{proof}
Now, \Cref{contraction_concu} follows by combining~\Cref{lemme_barycentre}, ~\Cref{D_k},
\Cref{thm_unichain} and~\Cref{prop-irr}. \hfill\qed
\begin{remark}\label{rk-explain}\todo{SG: added this remark}
  Combining~\Cref{D_k} and~\Cref{lemme_barycentre}, we see that the Shapley
  operator $T$ of a concurrent game is a contraction in Hilbert's seminorm
  of rate bounded by
  \begin{align}
  \max_{(\sigma_1,\tau_1),\dots, (\sigma_k,\tau_k) \in \purepolpair^k}
  \norm{P^{(\sigma_1,\tau_1)}\dots P^{(\sigma_k,\tau_k)}}_{\mathrm{H}}  \enspace .
  \label{newdob}
\end{align}
This should be compared with formula (3.6) of~\cite{FEDERGRUEN1978711},
which applies to the one player case. Unlike~\eqref{newdob},
the formula (3.6) involves the Dobrushin ergodicity coefficients of {\em rectangular}
matrices, of size $2n\times n$, arising from pairs of policies of a single player. The proof above, building on different
principles (semidifferentiability properties and convexity of the norm), leads
to an improved estimate -- showing that there is no need to consider pairs of policies of the same player.
  \end{remark}

\section{Proof of~\Cref{cor-complexity-concur}}
  From  \Cref{contraction_concu}, we know that $\shapleytheta^{k}$ is a
  contraction of rate $1-\theta^{k}$ in Hilbert's seminorm.
  Then, the bound on the number of iterations follows from~\Cref{th-terminates}
  from the concavity of the log function, which yields $|\log(1-\delta)|^{-1}\leq \delta^{-1}$ for all $0<\delta<1$,
  from $\hilbert{T(0)}\leq 2\|r\|_\infty$, and from the fact that, since $\chi(\shapleytheta)=(1-\theta)\chi(\shapley)$,
we need to apply~\Cref{RVI} with a precision of $(1-\theta)\epsilon$ to $\shapleytheta$, to get a final precision of
  $\epsilon$ for the value of the original game.
  \hfill\qed

  \section{Proof of~\Cref{th-uni-turn}}
\label{sec-proof-uni-turn}
  Let $x^*,\alpha,\beta$ be the vector and scalars returned when applying
  \Cref{RVI} to the operator $\shapleytheta$. Let $\mu$ be the value of the turn-based
  mean-payoff game with operator $T$. Since the game is unichain, there exists
  a vector $u\in\R^n$ such that $T(u)=\mu e +u$, so that $\shapleytheta(u) = \lambda e +u$
  with $\lambda = (1-\theta) \mu$.
  Then, by~\eqref{e-ub},
  \[
  \shapleytheta (x^*) \leq (\beta  +\eta)e +x^* \enspace,
  \]
 and $\beta + \eta \leq \lambda + \epsilon$.

    We shall make use of the following lemma.
    \begin{lemma}\label{lem-sep-turn}
      Let $\sigma',\tau'$ and $\sigma'',\tau''$ denote two pairs of 
      policies, that yield distinct values $\mu'$, $\mu''$.
      Then,
      \begin{align}\label{e-sep-turn}
      |\mu'-\mu''|> (nM^{n-1})^{-2} \enspace .
      \end{align}
    \end{lemma}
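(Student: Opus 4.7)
The plan is to reduce the separation bound to an elementary statement about differences of rationals with controlled denominators, using the unichain structure of the game together with the integrality assumptions in~\Cref{a-integer}.

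First, I would fix a pair of pure policies $(\sigma,\tau)\in\purepolpair$ and identify $\mu^{\sigma,\tau}$ explicitly. Since the game is unichain, the stochastic matrix $P^{\sigma,\tau}$ is unichain and therefore admits a unique invariant probability measure $\pi^{\sigma,\tau}$. By the standard theory of Markov reward processes, the associated Cesaro mean-payoff is then independent of the initial state and equals $\langle \pi^{\sigma,\tau},r^{\sigma,\tau}\rangle$; this is the value $\mu^{\sigma,\tau}$ attached to the pair $(\sigma,\tau)$ in the statement of the lemma.

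Next, I would control the denominator of $\mu^{\sigma,\tau}$. Under~\Cref{a-integer}, the entries of $P^{\sigma,\tau}$ are rationals with common denominator $M$, so~\Cref{mateusz} yields entries of $\pi^{\sigma,\tau}$ that are rationals admitting a common denominator bounded by $nM^{n-1}$. Combined with the integrality of $r^{\sigma,\tau}$, this forces $\mu^{\sigma,\tau}$ to be a rational number whose denominator divides some integer at most $nM^{n-1}$.

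Finally, I would apply this to both policy pairs. Writing $\mu' = a'/q'$ and $\mu'' = a''/q''$ with $q',q''\in\{1,\ldots,nM^{n-1}\}$, one has $\mu'-\mu''=(a'q''-a''q')/(q'q'')$, and when $\mu'\neq\mu''$ the numerator is a nonzero integer; hence $|\mu'-\mu''|\geq 1/(q'q'')\geq (nM^{n-1})^{-2}$, which yields the claimed bound (the exact separation we need in the subsequent complexity argument). The argument is a direct consequence of~\Cref{mateusz} followed by a one-line arithmetic computation, so I do not anticipate any genuine obstacle; the only non-trivial ingredient is the bound on the denominator of the invariant measure, which is imported from~\cite{skomra_bounds}.
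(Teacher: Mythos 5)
Your proposal follows the paper's proof essentially verbatim: identify $\mu^{\sigma,\tau}=\pi^{\sigma,\tau} r^{\sigma,\tau}$ via the unique invariant measure of the unichain matrix $P^{\sigma,\tau}$, invoke \Cref{mateusz} together with \Cref{a-integer} to bound the denominator by $nM^{n-1}$, and conclude by the arithmetic of differences of rationals. The one discrepancy is at the very last step: your bound $|\mu'-\mu''|\geq 1/(q'q'')\geq (nM^{n-1})^{-2}$ only yields the \emph{non-strict} inequality, whereas the lemma asserts $|\mu'-\mu''|> (nM^{n-1})^{-2}$, and this strictness is actually used downstream in the proof of \Cref{th-uni-turn} (with $\epsilon=(1-\theta)(nM^{n-1})^{-2}$, the case of equality would not be excluded by a weak bound). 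The paper closes this by observing that two distinct rationals with denominators at most $q$ differ by at least $1/(q(q-1))>1/q^2$ --- equivalently, in your notation, if $q'=q''$ the difference is at least $1/q' > (nM^{n-1})^{-2}$, while if $q'\neq q''$ then $q'q''\leq q(q-1)<q^2$ with $q=nM^{n-1}$; adding this one-line case distinction makes your argument complete.
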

    \begin{proof}
Denoting by $\pi$ the unique invariant measure of the matrix $P^{{\sigma'},{\tau'}}$,
      we have $\mu = \pi r^{\sigma',\tau'}$. By~\Cref{mateusz}, and using~\Cref{a-integer},
      we get that $\mu$ is a rational number whose denominator at most $nM^{n-1}$.
      Noting that the difference between two rational numbers of denominators at most $q$
      is at least $1/q(q-1)>1/q^2$, we get~\eqref{e-sep-turn}.
      \end{proof}

Now, by definition of $\sigma^*$, for all pure policies $\tau$ of player Max,
  \begin{align}
  r^{\sigma^*\tau}+ \theta x^* + (1-\theta)P^{\sigma^*\tau}x^* \leq (\beta  +\eta)e +x^* \enspace. \label{e-sub}
  \end{align}
  Let $\mu'$ denote the mean-payoff associated
  to the pair of policies $\sigma^*,\tau$, $\lambda'=(1-\theta)\mu'$,
  and let
  $\pi'$ denote the unique invariant measure or
  $P^{\sigma^*\tau}$, so that $\pi'P^{\sigma^*\tau}=\pi'$
  and $\pi'e=1$. Left multiplying~\eqref{e-sub} by $\pi'$,
  and noting that the contribution of $x^*$ cancels, we deduce that
  that $\lambda'= (1-\theta)\mu'\leq \beta + \eta \leq \lambda + \epsilon=(1-\theta)\mu+\epsilon$.
  Since we chose $\epsilon=(1-\theta)(nM^{n-1})^{-2}$, it follows from~\eqref{e-sep-turn} that
  $\mu'\leq \mu$, which shows that $\sigma^*$ is an optimal policy of Min.
  The dual argument allows one to show that $\tau^*$ is an optimal policy of player Max.

  Finally, the estimate~\eqref{e-neweps} of the number of iterations is gotten by combining~\Cref{th-terminates}
and~\Cref{contraction_concu}.
\section{Proof of~\Cref{km-mult}}\label{sec-proof-km-mult}
We first show the following lemma.\todo{SG: lemma and proof added}
\begin{lemma}\label{lem-comb}
  Let $\mathcal{A}_l$ denote the $l$-ambiguity of the game, and let
  $\mathcal{A}= \max_{1\leq l\leq \irr}\mathcal{A}_l^{1/l}$.
  Then, we have $\mathcal{A}_l\leq n^{l-1}W^l$, and
  $\mathcal{A}\leq n^{1-1/k}W$ where $k\coloneqq \irr$.
  Moreover, for all $s>0$, and for all policies $\sigma_1,\tau_1,\dots,\sigma_{k},\tau_{k}$,
  the entries of the matrix
  \[
\mathcal{M}(s)\coloneqq (s I + M^{\sigma_1,\tau_1})\dots (s I + M^{\sigma_{k},\tau_{k}})
  \]
  do not exceed $(s+\mathcal{A})^k$.
\end{lemma}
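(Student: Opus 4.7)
The plan is to treat the three assertions in order, all following by direct calculation from the definitions.

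For the first bound, I would fix policies $\sigma_1,\tau_1,\dots,\sigma_l,\tau_l$ and expand the $(d,d')$ entry of the product as
\[
(M^{\sigma_1\tau_1}\dots M^{\sigma_l\tau_l})_{d,d'}
= \sum_{d_1,\dots,d_{l-1}\in\vertexsetD} M^{\sigma_1\tau_1}_{d,d_1}\,M^{\sigma_2\tau_2}_{d_1,d_2}\cdots M^{\sigma_l\tau_l}_{d_{l-1},d'}\enspace .
\]
Each entry $M^{\sigma_i\tau_i}_{d_{i-1},d_i}$ is either $0$ or a multiplicity of the form $m_{p,d_i}$, hence bounded by $W$. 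Since the sum has at most $n^{l-1}$ terms, it is bounded by $n^{l-1}W^l$, which gives $\mathcal{A}_l\le n^{l-1}W^l$.

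For the second bound, I would simply take $l$-th roots, giving $\mathcal{A}_l^{1/l}\le n^{1-1/l}W$. This expression is monotonic in $l$, so the maximum over $1\le l\le k$ is attained at $l=k$, yielding $\mathcal{A}\le n^{1-1/k}W$.

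The third assertion is the main point. Setting $N_i\coloneqq M^{\sigma_i,\tau_i}$ and expanding the product, and using that the scalar terms $sI$ commute with everything, I obtain
\[
\mathcal{M}(s) = \sum_{S\subseteq [k]} s^{k-|S|}\, N_S, \qquad \text{where } N_S\coloneqq \prod_{i\in S\text{, in increasing order}} N_i
\]
(with $N_\varnothing = I$). For $|S|=l\ge 1$, every entry of $N_S$ is bounded by $\mathcal{A}_l$ by definition of the $l$-ambiguity, and from the definition $\mathcal{A}=\max_{1\le l\le k}\mathcal{A}_l^{1/l}$ I have $\mathcal{A}_l\le \mathcal{A}^l$. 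For $|S|=0$, every entry of $I$ is at most $1$. Since all $N_S$ have nonnegative entries, every entry of $\mathcal{M}(s)$ is thus bounded by
\[
\sum_{l=0}^{k}\binom{k}{l} s^{k-l}\mathcal{A}^l = (s+\mathcal{A})^k \enspace .
\]

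I do not foresee any real obstacle: part (1) is pure path-counting, part (2) is elementary, and part (3) boils down to expanding $(sI+N_1)\cdots(sI+N_k)$ and bounding each term using the definition of $\mathcal{A}$. The only mild subtlety worth flagging in the write-up is the commutation of the scalar $sI$ with the $N_i$, which is what makes the expansion resemble a binomial sum even though the $N_i$ themselves do not commute.
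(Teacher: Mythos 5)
Your proof is correct and follows essentially the same route as the paper's: the paper bounds $M^{\sigma_1\tau_1}\dots M^{\sigma_l\tau_l}\leq (WJ)^l=n^{l-1}W^lJ$ (with $J$ the all-ones matrix) where you count the $n^{l-1}$ summation paths entrywise, which is the identical computation in different notation, and your expansion of $\mathcal{M}(s)$ into ordered subproducts $N_S$ bounded via $\mathcal{A}_{|S|}\leq \mathcal{A}^{|S|}$ and the binomial sum is exactly the paper's argument. No gaps; your remark on the commutation of $sI$ with the non-commuting $N_i$ is the right subtlety to flag.
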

\begin{proof}
  Let $J$ denote the $n\times n$ matrix whose entries are identically $1$, and observe that $J^2=nJ$. Then,
  \( M^{\sigma_1\tau_1}\dots M^{\sigma_l\tau_l}\leq (WJ)^l=n^{l-1}JW^l \). It follows
  that $\mathcal{A}_l\leq n^{l-1}W^l$ and $\mathcal{A}\leq n^{1-1/k}W$. 
  By expanding the product defining $\mathcal{M}(s)$, we get
  \begin{align*}
    (\mathcal{M}(s))_{d,d'}
     & = \sum_{0\leq l\leq k} \sum_{i_1<i_2<\dots<i_l}
    s^{k-l}[  M^{\sigma_{i_1},\tau_{i_1}}
      \dots
      M^{\sigma_{i_l},\tau_{i_l}}    ]_{d,d'}\\
    & \leq \sum_{0\leq l\leq k} {k \choose l} s^{k-l}\mathcal{A}_l
    \leq \sum_{0\leq l\leq k} {k \choose l} s^{k-l}\mathcal{A}^l
    = (s+\mathcal{A})^k \enspace .
    \end{align*}
\end{proof}
It will be convenient to define:
\[
  [\FmKM(x)]_d  = \min_{t\in \vertexsetT,(d,t)\in\edges} \max_{p\in\vertexsetP,(t,p)\in\edges} \big( \vartheta x_d + \sum_{d'\in \vertexset{D},(p,d')\in\edges} m_{p,d'} x_{d'}\big),\qquad
  \forall d\in \vertexsetD
  \enspace ,
  \]
  so that $\shapleymKM = \log \circ \FmKM \circ \exp$,
  where the $\log$ and $\exp$ operations are taken entry-wise.
 It follows that $\shapleymKM^k = \log \circ \FmKM^k \circ \exp$.

We first observe that $\FmKM$ and $\shapleymKM$ are semi-differentiable. 
Let $u$ and $h$ be two vectors of $\RR^n$. Since $\log'_u(h) = \diag(u)^{-1}h$ and $\exp'_u(h) = \diag(e^u)h$, we have:
\[ (\shapleymKM^k)'_u(h) = \diag(\FmKM(e^u))^{-1} [(\FmKM^k)'_{e^u} \diag(e^u)h] \]
And, we know that for all vectors $v$ and $h'$:
\[ (\FmKM^k)'_{v} h' = (\FmKM')_{\FmKM^{k-1}(v)} \circ \cdots \circ (\FmKM')_v(h')\]
Moreover, for all $v$, $h'$, there exists $\sigma$ and $\tau$ such that:$(\FmKM')_v (h') = (m I + M^{\sigma, \tau})h'$. Hence, there exists, $\sigma_1, \tau_1, \cdots, \sigma_k, \tau_k$ such that:
\[ (\FmKM^k)'_{e^u} \diag(e^u)h = (\vartheta I + M^{\sigma_1, \tau_1}) \cdots (\vartheta I + M^{\sigma_k, \tau_k})\diag(e^u)h \]
We denote $\mathcal{M} = (\vartheta I + M^{\sigma_1, \tau_1}) \cdots (\vartheta I  + M^{\sigma_k, \tau_k})$, then :\
$(\shapleymKM^k)'_u(h) = \diag(\FmKM(e^u))^{-1} [\mathcal{M} \diag(e^u)h]$.
Besides, $\FmKM^k(e^u) = \mathcal{M}e^u$.
Hence:
\[ (\shapleymKM^k)'_u(h) = \diag(\mathcal{M}(e^u))^{-1} [\mathcal{M} \diag(e^u)h] \enspace .\]
Recall that Hilbert's projective metrix is defined by\todo{SG: added def of Hilbert's metric, and corrected a few typos in this part}
\[
d_{\mathrm{H}}(x,y)\coloneqq\|\log x-\log y\|_H,\qquad \forall x,y\in \R_{>0}^n \enspace .
\]
Since the matrix $\mathcal{M}$ has positive entries, the Birkhoff-Hopf theorem
yields
\[ d_{\mathrm{H}} (\mathcal{M}x, \mathcal{M}y) \leq \tanh(\frac{\Delta}{4}) d_{\mathrm{H}}(x, y)\enspace,\]
where
\[
\Delta= \max_{i,j} \|\log \calm_{i\cdot} -\log \calm_{j\cdot}\|_H \enspace,
\]
is the diameter of the the set $\calm(\R_{>0}^n)$ in Hilbert's projective metric,
see~\cite[Appendix~A]{nussbaumlemmens} for more information.
Hence:
\[ \norm{\log(\mathcal{M}e^u) - \log(\mathcal{M}e^v)}_{\mathrm{H}} \leq \tanh(\frac{\Delta}{4})\norm{u-v}_{\mathrm{H}}\]
Using the fact that \[(\log \circ \mathcal{M} \circ \exp)'_u h = \lim_{t \to 0} \frac{\log \circ \mathcal{M} \circ \exp(u+th) - \log \circ \mathcal{M} \circ \exp(u)}{t}\] 
We have: \[ \norm{(\log \circ \mathcal{M} \circ \exp)'_u h}_{\mathrm{H}} \leq \tanh(\frac{\Delta}{4})\norm{h}_{\mathrm{H}}. \]
Thus, \[ \norm{(\shapleymKM^k)'_u(h)}_{\mathrm{H}} \leq \tanh({\frac{\Delta}{4}}) \norm{h}_{\mathrm{H}} \enspace .\]
Since every non-zero entry of $\mathcal{M}$ is bounded below by $\vartheta=\mbar$,
the minimal entry of $\mathcal{M}$
is bounded below by $\vartheta^k$.
Moreover, by~\Cref{lem-comb},
every entry of $\mathcal{M}$ is bounded above by $(\vartheta  +\mathcal{A})^k$.
\todo{SG: bound modified taking into account the ambiguity}
It follows that
\[ e^{\Delta} = \max_{i, j, i', j'} \frac{\mathcal{M}_{i, j}\mathcal{M}_{i', j'}}{\mathcal{M}_{i', j}\mathcal{M}_{i, j'}}
\leq (1+\mathcal{A}/\vartheta)^{2k}=\mcst^2
\enspace .
\]
Then $\tanh{\frac{\Delta}{4}} = \frac{e^{\Delta/2} - 1}{e^{\Delta/2} + 1} \leq \frac{\mcst - 1}{\mcst + 1}$.
Thus,
\[\norm{\shapleymKM^k(x) - \shapleymKM^k(y)}_{\mathrm{H}} \leq \frac{\mcst - 1}{\mcst + 1}\norm{x-y}_{\mathrm{H}}.\]

  \begin{remark}
We note that \Cref{RVI} may be implemented by considering the iteration $y\coloneqq \FmKMtilde(y)/\mathsf{t}(\FmKMtilde(y))$, where $\FmKMtilde$ is an approximation of the operator $\FmKM$, such that $\|\log \FmKM(x)-\log\FmKMtilde(x)\|_\infty \leq \eta$,  avoiding 
unecessary evaluations of $\log$ and $\exp$ at every iteration. Then, the stopping condition should be replaced by $\norm{\log(y)-\log(\FmKMtilde(y))}_{\mathrm{H}} \leq \epsilon$.
\end{remark}
\section{Proof of~\Cref{th-irr-entropy}}
We follow the same method as in the Proof of~\Cref{th-uni-turn}, but now exploiting the
separation bound of~\Cref{th-sepentropy}.

If an entropy game is irreducible, then, the recession function $\hat{T}$
has only fixed points that belong to the diagonal $\R e$, and then,
it follows from~\cite[Th.~9 and~13]{arxiv1} that the eigenproblem $T(u) =\nu e +u$
with $\nu\in \R$ and $u\in \R^n$ is solvable. Moreover, $\nu = \log\mu$,
where $\mu$ is the value of the entropy game.
Then, the non-linear eigenvalue $\lambda$ solution of the eigenproblem
$\shapleymKM(u) = \lambda e +u $ is given by $\log(\mbar+\mu)$.
Moreover, \Cref{RVI} applied to the operator $\shapleymKM$ returns a vector $x^*$
and scalars $\alpha,\beta$ such that
\[
(\alpha-\eta) e + x^* \leq \shapleymKM(x^*) \leq (\beta+\eta) e + x^* 
\]
where
\[
\log (\mbar+\mu) - \epsilon\leq\alpha -\eta \leq  \beta + \eta \leq \log (\mbar+\mu) + \epsilon
\]

We now observe that if two pairs of strategies yield distinct values $\mu'$ and $\mu''$ in an entropy game,
then, applying the Taylor formula with exact remainder,  together with~\Cref{th-sepentropy},
\begin{align}
|\log(\mbar+\mu') -\log (\mbar+\mu'')|\geq ((\mbar+W)\nu_{n})^{-1} \enspace.\label{e-sepeff}
\end{align}
Let $\sigma^*$ denote a policy of Despot obtained by selecting, for each node $d\in \vertexsetD$,
a minimizing action in the expression
\[
\min_{t\in \vertexsetT,(d,t)\in\edges} \max_{p\in\vertexsetP,(t,p)\in\edges} \big( \vartheta x^*_d + \sum_{d'\in \vertexsetD,(p,d')\in\edges} m_{p,d'} \exp(x^*_{d'})\big),
\]
and similarly, let $\tau^*$ denote a policy of Tribune obtained by selecting, for each node
$t\in\vertexsetT$, a maximizing action in the inner expression above.

Define, for all pairs of policies $\sigma,\tau$, the map
\[
\shapleymKM^{\sigma,\tau} (x)\coloneqq \log \big((\vartheta I + M^{\sigma,\tau})\exp(x)\big) \enspace .
  \]
  We deduce from $\shapleymKM(x^*) \leq (\beta+\eta)e +x^*$ that
  $\shapleymKM^{\sigma^*,\tau}(x^*)  \leq (\beta+\eta) e +x^*$, and if $\mu'$
  is the value of the pair of policies $(\sigma^*,\tau)$, this
  entails that $\log(\mbar + \mu') \leq \beta +\eta
  \leq \log(\mbar +\mu)+\epsilon$. Since we chose
  $\epsilon= (1+(\mbar+W)\nu_n)^{-1}< ((\mbar+W)\nu_{n})^{-1}$, using~\Cref{e-sepeff},
 we deduce that $\sigma^*$ guarantees to Despot
 a value which does not exceed $\mu$. A dual argument
 shows that $\tau^*$ guarantees to Tribune a value of at least $\mu$.
It follows that $\sigma^*$ and $\tau^*$ are optimal policies.

Finally, the number of iterations given in~\Cref{th-irr-entropy} is obtained by applying~\Cref{th-terminates},
with $\gamma = (\mcst-1)/(\mcst+1)$, observing that $(\mcst-1)/(\mcst+1)\leq 1/(1+2 \mcst^{-1})$,
and using the concavity of the log map to deduce that
$1/|\log \gamma|  \leq 1/\log(1+2\mcst^{-1})\leq 1/(2\mcst^{-1})=\mcst/2$.\todo{SG: proof of bound added and bound corrected}
\end{document}